\newcommand{\erase}[1]{}
\newtheorem{theorem}{Theorem}[section]
\newtheorem{lemma}[theorem]{Lemma}
\newtheorem{proposition}[theorem]{Proposition}
\newtheorem{corollary}[theorem]{Corollary}
\newtheorem{_algorithm}[theorem]{Algorithm}
\newenvironment{algorithm}{\begin{_algorithm}\rm}{\hfill \rule{3pt}{6pt}\end{_algorithm}}
\newtheorem{_procedure}[theorem]{Procedure}
\newtheorem{_definition}[theorem]{Definition}
\newenvironment{definition}{\begin{_definition}\rm}{\end{_definition}}
\newtheorem{_remark}[theorem]{\it Remark}
\newenvironment{remark}{\begin{_remark}\rm}{\end{_remark}}
\newtheorem{_example}[theorem]{Example}
\newtheorem{_assumption}[theorem]{Assumption}
\newtheorem{_construction}[theorem]{Construction}
\newtheorem{_claim}[theorem]{Claim}
\newtheorem{_conjecture}[theorem]{Conjecture}
\numberwithin{equation}{section}
\numberwithin{table}{section}
\numberwithin{figure}{section}
\renewcommand{\qed}{\hfill {$\Box$}}
\newcommand{\C}{\mathord{\mathbb C}}
\newcommand{\F}{\mathord{\mathbb F}}
\renewcommand{\P}{\mathord{\mathbb  P}}
\newcommand{\Q}{\mathord{\mathbb  Q}}
\newcommand{\R}{\mathord{\mathbb R}}
\newcommand{\Z}{\mathord{\mathbb Z}}
\newcommand{\FFF}{\mathord{\mathcal F}}
\newcommand{\GGG}{\mathord{\mathcal G}}
\newcommand{\HHH}{\mathord{\mathcal H}}
\newcommand{\PPP}{\mathord{\mathcal P}}
\newcommand{\RRR}{\mathord{\mathcal R}}
\newcommand{\TTT}{\mathord{\mathcal T}}
\newcommand{\VVV}{\mathord{\mathcal V}}
\newcommand{\WWW}{\mathord{\mathcal W}}
\newcommand{\SSSS}{\mathord{\mathfrak S}}
\newcommand{\maprightsp}[1]{\; \smash{\mathop{\; \longrightarrow \; }\limits\sp{#1}}\; }
\newcommand{\maprightinj}{\; \smash{\mathop{\; \inj \; }}\; }
\newcommand{\mapdown}{\phantom{\Big\downarrow}\hskip -8pt \downarrow}
\newcommand{\mapdownright}[1]{\mapdown\rlap{$\vcenter{\hbox{$\scriptstyle#1$}}$}}
\newcommand{\mapdownsurj}{
\hbox{$\bigm\downarrow$}
\llap{\hbox{\raise 2pt\hbox{$\bigm\downarrow$}}}%
\vstrechmapdown
}
\newcommand{\mapupsurj}{
\hbox{$\bigm\uparrow$}
\llap{\hbox{\raise 2pt\hbox{$\bigm\uparrow$}}}%
\vstrechmapup
}
\newcommand{\inj}{\hookrightarrow}
\newcommand{\surj}{\mathbin{\to \hskip -7pt \to}}
\newcommand{\isom}{\xrightarrow{\sim}}
\newcommand{\set}[2]{\{\; {#1} \; \mid \; {#2} \;  \}}
\newcommand{\shortset}[2]{\{ {#1} \,|\, {#2}   \}}
\newcommand{\sethd}[3]{\left\{\;\;  {#1}\;\; \left|\;\;  \vcenter{\hbox{\parbox{#2}{#3}}}\;\;  \right. \right\}}
\newcommand{\gen}[1]{\langle {#1}  \rangle}
\newcommand{\tensor}{\otimes}
\newcommand{\sprime}{\sp\prime}
\newcommand{\spprime}{\sp{\prime\prime}}
\newcommand{\sperp}{\sp{\perp}}
\newcommand{\dual}{\sp{\vee}}
\newcommand{\inv}{\sp{-1}}
\newcommand{\Hom}{\mathord{\mathrm{Hom}}}
\newcommand{\OG}{\mathord{\mathrm{O}}}
\newcommand{\id}{\mathord{\mathrm{id}}}
\newcommand{\Ker}{\operatorname{\mathrm{Ker}}\nolimits}
\newcommand{\Aut}{\operatorname{\mathrm{Aut}}\nolimits}
\newcommand{\pr}{\mathord{\mathrm{pr}}}
\newcommand{\rank}{\operatorname{\mathrm{rank}}\nolimits}
\newcommand{\closure}[1]{\overline{#1}}
\newcommand{\mystruth}[1]{\phantom{\hbox{\vrule height #1}}}
\newcommand{\mystrutd}[1]{\phantom{\hbox{\vrule depth #1}}}
\newcommand{\barX}{\overline{X}}
\newcommand{\barY}{\overline{Y}}
\newcommand{\barD}{\overline{D}}
\newcommand{\barC}{\overline{C}}
\newcommand{\barPPP}{\overline{\PPP}}
\newcommand{\spstar}{^{*}}
\newcommand{\enrinvol}{\varepsilon}
\newcommand{\intform}[1]{\langle #1\rangle}
\newcommand{\intf}[1]{\langle #1\rangle}
\newcommand{\intfvoid}{\intf{\phantom{\cdot}, \phantom{\cdot}}}
\newcommand{\intfX}[1]{\langle #1\rangle_{X}}
\newcommand{\intfY}[1]{\langle #1\rangle_{Y}}
\newcommand{\DX}{D_X}
\newcommand{\DY}{D_Y}
\newcommand{\hX}{h_X}
\newcommand{\hY}{h_Y}
\newcommand{\SX}{S_X}
\newcommand{\SY}{S_Y}
\newcommand{\SZ}{S_Z}
\newcommand{\aut}{\mathord{\mathrm{aut}}}
\newcommand{\LR}{L_{\R}}
\newcommand{\LQ}{L_{\Q}}
\newcommand{\prim}{\mathord{\rm{prim}}}
\newcommand{\tilv}{\tilde{v}}
\newcommand{\Rhalf}{R_{\mathord{\rm half}}}
\newcommand{\Rfull}{R_{\mathord{\rm full}}}
\newcommand{\RDP}{\mathord{\rm RDP}}
\newcommand{\Lten}{L_{10}}
\newcommand{\Zgen}{Z_{\mathord{\rm gen}}}
\newcommand{\SZgen}{S_{\Zgen}}
\newcommand{\nefbig}{N}
\newcommand{\VR}{V_{\R}}
\newcommand{\PPPV}{\PPP_V}
\newcommand{\barv}{\bar{v}}
\newcommand{\baru}{\bar{u}}
\newcommand{\barg}{\bar{g}}
\newcommand{\barsigma}{\bar{\sigma}}
\newcommand{\simw}{\mathbin{\,\mathop{\sim}\limits^{a}\,}}
\newcommand{\simwvoid}{\mathbin{\mathop{\sim}\limits^{a}}}
\newcommand{\vare}{\varepsilon}
\newcommand{\algoG}{\mathord{\tt G}}
\newcommand{\algof}{\mathord{\tt f}}
\newcommand{\algoid}{\id}
\newcommand{\maxSRCdeg}{46}
\newcommand{\wtautY}{\aut\sprime (Y)}
\newcommand{\YVI}{Y_{\rm{VI}}}
\newcommand{\HS}{H_S}
\newcommand{\dcolon}{\;\colon\,}
\newcommand{\GO}{\mathord{\rm GO}}
\begin{document}
\title[Enriques surface associated with a  quartic Hessian surface]%
{On an Enriques surface associated with a  quartic Hessian surface}
\author{Ichiro Shimada}
\address{Department of Mathematics,
Graduate School of Science,
Hiroshima University,
1-3-1 Kagamiyama,
Higashi-Hiroshima,
739-8526 JAPAN}
\email{ichiro-shimada@hiroshima-u.ac.jp}
\thanks{This work was supported by JSPS KAKENHI Grant Number 16H03926 and 16K13749.}
\dedicatory{Dedicated to Professor Jonghae Keum on the occasion of his 60th birthday}

\begin{abstract}
Let $Y$ be  a complex Enriques surface
whose  universal cover $X$ is birational to a general  quartic Hessian surface.
Using the result on the automorphism group of $X$
due to Dolgachev and Keum,
we obtain
a finite presentation of the automorphism group of  $Y$.
The list of elliptic fibrations on $Y$
and the list of combinations of rational double points that can appear on a surface birational to $Y$
are presented.
As an application,
a set of generators of
the automorphism group of the generic Enriques surface is calculated explicitly.
\end{abstract}

\subjclass[2010]{14J28, 14Q10}

\maketitle
\section{Introduction}
We work over the complex number field $\C$.
An involution on a $K3$ surface is called an \emph{Enriques involution}
if it has no fixed-points.
Let $\barX$ be a general quartic Hessian surface, which
means that $\barX$  is the quartic surface in $\P^3$ defined by the equation
$$
\det\left( \frac{\partial^2 F}{\partial x_i\partial x_j}\right)=0,
$$
where $F=F(x_1, \dots, x_4)$ is a general cubic homogeneous polynomial.
Then $\barX$ has ten ordinary nodes $p_{\alpha}$ as its only singularities,
and contains exactly ten lines $\ell_{\beta}$.
Let $A$ denote the set of subsets $\alpha$ of $\{1, 2, 3, 4, 5\}$
with $|\alpha|=3$,
and $B$  the set of subsets $\beta$ of $\{1, \dots, 5\}$
with $|\beta|=2$.
Then  $p_{\alpha}$ and  $\ell_{\beta}$
can be indexed by $\alpha\in A$ and $\beta\in B$, respectively,
in such a way that
$p_{\alpha}\in \ell_{\beta}$ if and only if $\alpha \supset \beta$.
Let $X\to \barX$ be the minimal resolution,
let $E_{\alpha}$ be the exceptional curve over $p_{\alpha}$,
and let $L_{\beta}$ be the strict transform of  $\ell_{\beta}$.
It is classically known (see Dolgachev and Keum~\cite{MR1897389}) that  the $K3$ surface $X$ has
an Enriques involution $\enrinvol$
that interchanges  $E_\alpha$ and
$L_{\bar{\alpha}}$ for each $\alpha\in A$,
where $\bar{\alpha}:=\{1, \dots, 5\}\setminus \alpha$.
We denote the  quotient morphism by
$$
\pi\;\colon \;X\; \to \;Y:=X/\gen{\enrinvol}.
$$
\par
The first application of Borcherds method~(\cite{MR913200},~\cite{MR1654763})
to the automorphism group of $K3$ surfaces
was given by Kondo~\cite{MR1618132}.
A set of generators of
the automorphism group $\Aut(X)$ of the $K3$ surface $X$ above was obtained in Dolgachev and Keum~\cite{MR1897389}
by this method.
On the other hand, we presented in~\cite{MR3456710}
a computer algorithm for
Borcherds method.
Using this computational tool,
we obtain an explicit description of $\Aut(X)$ and a fundamental domain $\DX $ of its action on the cone
$$
\nefbig(X):=\set{x\in \PPP_X}{\;\intf{x, [C]}\ge 0\;\;\textrm{for all curves $C$ on $X$}\;}
$$
in $ \SX\tensor \R$,
where $\SX$ is  the N\'eron-Severi lattice of $X$
with the intersection form $\intfvoid$,
$\PPP_X$ is the connected component of $\shortset{x\in \SX\tensor\R}{\intf{x,x}>0}$ containing an ample class,
and $[C]\in \SX$ is the class of a curve $C\subset X$.
\par
By analyzing  this result,
we obtain the following results on the automorphism group $\Aut(Y)$
of the Enriques surface $Y$.
%
%
%
Let $\iota_{\alpha}\colon X\to X$ denote the involution
of $X$ induced by the double covering   $\barX\to \P^2$ obtained from
the projection with the center $p_{\alpha}\in\barX$.
In~\cite{MR1897389},
it was proved that  $\iota_{\alpha}$ commutes with  $\enrinvol$.
Hence $\iota_{\alpha}$ induces an involution $j_{\alpha}\colon Y\to Y$ of $Y$.
\begin{theorem}\label{thm:genAut}
The automorphism group $\Aut(Y)$ of  $Y$ is generated by the ten involutions $j_{\alpha}$.
The following relations form a set of defining relations of $\Aut(Y)$ with respect to these generators $j_{\alpha}$;
$$
j_{\alpha}^2=\id
$$
for each ordinary node $p_{\alpha}$,
$$
(j_{\alpha} j_{\alpha\sprime} j_{\alpha\spprime})^2=\id
$$
for each triple $(p_\alpha, p_{\alpha\sprime}, p_{\alpha\spprime})$  of distinct three ordinary nodes
such that  there exists a line in $\barX$ passing  through
 $p_{\alpha},  p_{\alpha\sprime}, p_{\alpha\spprime}$, and
$$
(j_{\alpha} j_{\alpha\sprime} )^2=\id
$$
 for each  pair $(p_\alpha, p_{\alpha\sprime})$ of distinct  ordinary nodes
 such that the line in $\P^3$ passing through  $p_{\alpha}$ and $p_{\alpha\sprime}$ is not contained in $\barX$.
\end{theorem}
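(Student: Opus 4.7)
The plan is to descend the chamber structure on $\nefbig(X)$ to $\nefbig(Y)$ and run a Borcherds/Vinberg-type argument there. Any $g \in \Aut(Y)$ lifts to an automorphism of $X$ commuting with $\enrinvol$ (the lift being unique up to composition with $\enrinvol$), so $\Aut(Y)$ is identified with the quotient of the centralizer of $\enrinvol$ in $\Aut(X)$ by $\gen{\enrinvol}$. By Theorem~\ref{thm:Autinj}, this quotient acts faithfully on $\SY$, which after rescaling the intersection form is the $\enrinvol$-invariant sublattice of $\SX$; the induced action preserves the nef cone $\nefbig(Y) \subset \SY \tensor \R$ together with the decomposition induced by the $\Aut(X)$-tessellation of $\nefbig(X)$.

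First I would construct a fundamental domain $D_Y \subset \nefbig(Y)$ for $\Aut(Y)$ by intersecting an $\enrinvol$-stable chamber of the $\Aut(X)$-tessellation with the invariant slice $(\SX \tensor \R)^{\enrinvol}$; the explicit description of $\DX$ makes this computable. Second I would enumerate the walls of $D_Y$: each wall is of the form $v^\perp \cap D_Y$ for some $v \in \SY$ of negative square, and the element of $\Aut(Y)$ taking $D_Y$ to the adjacent chamber across the wall is a generator. I expect exactly ten such walls. Third I would match these walls with the involutions $j_\alpha$ by computing the action of $\iota_\alpha$ on $\SX$ (using the fact that $\iota_\alpha$ is induced by the projection with center $p_\alpha$ and fixes $E_\alpha$ pointwise), descending to $\SY$, and checking that each $j_\alpha$ realizes the expected wall-crossing. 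This would establish that the $j_\alpha$ generate $\Aut(Y)$.

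Finally, the defining relations would be read off from the incidence combinatorics of $D_Y$ via a Poincar\'e polyhedron argument. The relations $j_\alpha^2 = \id$ are automatic from the involutivity of wall-crossings. The relations of the forms $(j_\alpha j_{\alpha\sprime})^2 = \id$ and $(j_\alpha j_{\alpha\sprime} j_{\alpha\spprime})^2 = \id$ should emerge from the stabilizers of the codimension-two faces of $D_Y$, and I would match these with the two geometric conditions on the nodes $p_\alpha$ stated in the theorem, namely collinearity of a triple of nodes on $\barX$ versus the line through a pair of nodes failing to lie on $\barX$. The main obstacle is this last step: one must verify, by exhaustive combinatorial inspection using the explicit form of $\DX$, that the codimension-two face stabilizers give rise to precisely the listed relations with no independent relation missing, thereby establishing completeness of the presentation.
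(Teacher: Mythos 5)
Your overall strategy coincides with the paper's: identify $\Aut(Y)$ with $Z_{\Aut(X)}(\enrinvol)/\gen{\enrinvol}$ acting faithfully on $\SY$, take $\DY=\PPP_Y\cap\DX$ as a fundamental domain for the action on $\nefbig(Y)$, realize the generators as wall-crossings, and extract the defining relations from the codimension-two faces by the standard Poincar\'e-polyhedron argument. Two concrete points in your plan are wrong, however, and both would derail the computation if followed literally. First, $\DY$ has $10+10$ walls, not ten: besides the ten walls $(\barv_{\alpha})\sperp$ crossed by the $j_{\alpha}$, there are ten walls $(\baru_{\alpha})\sperp$ cut out by the classes of the smooth rational curves $\pi(E_{\alpha})=\pi(L_{\bar{\alpha}})$. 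These are walls of $\nefbig(Y)$ itself, carry no generator (the reflections in them do not lie in $\aut(Y)$), and must be recognized as such for the fundamental-domain argument to close up. The same inner/outer dichotomy governs the relations: of the codimension-two faces of $\DY$, only the $45$ \emph{inner} ones (those whose general point is interior to $\nefbig(Y)$) produce relations, and they fall into $10+15$ equivalence classes matching exactly the two families in the statement; the $135$ outer codimension-two faces contribute nothing and would yield spurious relations if included in your ``exhaustive combinatorial inspection.''

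Second, your parenthetical recipe for computing the action of $\iota_{\alpha}$ is false: $\iota_{\alpha}$ does not fix $E_{\alpha}$ pointwise, nor even its class. The projection from $p_{\alpha}$ maps $E_{\alpha}$ isomorphically onto the conic given by the projectivized tangent cone at $p_{\alpha}$, and the deck transformation swaps $E_{\alpha}$ with the other sheet over that conic, which is a different curve. The $(+1)$-eigenspace of $g_{\alpha}=\varphi_X(\iota_{\alpha})$ is spanned by $h_Q-[E_{\alpha}]$ (the pullback of a line) together with the classes of the curves actually contracted by the projection (a $3A_3+3A_1$ configuration of exceptional curves and lines), and $[E_{\alpha}]$ does not belong to it. Computing $g_{\alpha}$ from your assumption would give the wrong matrix, and the verification that $j_{\alpha}$ crosses the wall $\DY\cap(\barv_{\alpha})\sperp$ would fail. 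With these two corrections your outline reproduces the paper's proof.
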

\begin{remark}\label{rem:Dolgachev}
Recently Dolgachev~\cite{DolgachevArxiv2016} studied the group generated
by the involutions  $j_{\alpha}$ (that is, $\Aut(Y)$ by Theorem~\ref{thm:genAut}), and
showed that this group is isomorphic to a subgroup of
$\Gamma\rtimes\SSSS_5$,
where $\Gamma$ is a group
isomorphic to the Coxeter group
with the anti-Petersen graph as its Coxeter graph.
See~Corollary 4.4~of~\cite{DolgachevArxiv2016}.
(This corollary was also known to Mukai.)
\end{remark}
\begin{remark}\label{rem:MukaiOhashi}
Mukai and Ohashi informed us that they also proved,
without computer-aided calculations,
that $\Aut(Y)$ is generated by $j_{\alpha}$ ($\alpha\in A$).
See also~\cite{MukaiOhashi}.
\end{remark}
Let $\SY$ denote the  lattice of numerical equivalence classes of divisors on $Y$,
which is isomorphic to $H^2(Y, \Z)/(\text{torsion})$
equipped with the cup-product.
By the result of~\cite{MR759266}, ~\cite{KondoFinite}  and~\cite{MR2740697},
we know that the action of  $\Aut(Y)$  on  $\SY$  is faithful.
Theorem~\ref{thm:genAut}
is proved by investigating this faithful action.
More precisely,
let $\PPP_Y$ denote the connected component
of $\shortset{y\in \SY\tensor \R}{\intf{y,y}> 0}$
containing an ample class.
We put
$$
\nefbig(Y):=\set{y\in \PPP_Y}{\intf{y, [C]}\ge 0\;\;\textrm{for all curves $C$ on $Y$}\;}.
$$
It is obvious that $\Aut(Y)$ acts on  $\nefbig(Y)$.
We give a description of  a fundamental domain $\DY$
of the action of  $\Aut(Y)$ on
$\nefbig(Y)$.
For $v\in \SY\tensor\R$ with $\intf{v, v}<0$,
let $(v)\sperp$ denote the hyperplane
in $\PPP_Y$  defined by  $\intf{v, x}=0$.
\begin{theorem}\label{thm:DY}
There exists a fundamental domain $\DY$  of the action of $\Aut(Y)$ on $\nefbig(Y)$ with the following properties.
\begin{enumerate}[{\rm (1)}]
\item
The fundamental domain $\DY$ is bounded by $10+10$ hyperplanes
$(\baru_{\alpha})\sperp$ and $(\barv_{\alpha})\sperp$,
where $\alpha$ runs through the set $A$.
\item
For each $\alpha\in A$,
the vector $\baru_{\alpha}$ is the class of
the  smooth rational curve $\pi(E_{\alpha})=\pi(L_{\bar{\alpha}})$ on $Y$,
and hence
 $(\baru_{\alpha})\sperp$ is a  hyperplane bounding  $\nefbig(Y)$.
\item
For each $\alpha\in A$,
the involution $j_{\alpha}\in \Aut(Y)$ maps $\DY$ to the chamber adjacent to $\DY$
across the wall $\DY \cap (\barv_{\alpha})\sperp$ of $\DY$.
\end{enumerate}
\end{theorem}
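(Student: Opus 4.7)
The plan is to descend the fundamental domain $\DX$ of $\Aut(X)$ on $\nefbig(X)$, already computed via the Borcherds method in~\cite{MR1897389} and~\cite{MR3456710}, down to $Y$ by exploiting the Enriques involution $\enrinvol$. The pullback $\pi^*\colon\SY\tensor\R\hookrightarrow\SX\tensor\R$ scales the intersection form by a factor of $2$ and identifies $\SY\tensor\R$ with the fixed subspace of $\enrinvol$ acting on $\SX\tensor\R$. Since $\pi$ is an unramified double cover, $\pi^*(\nefbig(Y))$ is exactly the slice of $\nefbig(X)$ by this fixed subspace, and the image of $\Aut(Y)$ in $\mathrm{O}(\SX)$ lies in the centralizer of $\enrinvol$. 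Thus a fundamental domain for the action of $\Aut(Y)$ on $\nefbig(Y)$ can be extracted from how $\DX$ and its $\Aut(X)$-translates meet this invariant slice.

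For part~(2) and the ten walls $(\baru_{\alpha})\sperp$: the pair $(E_\alpha, L_{\bar{\alpha}})$ is exchanged by $\enrinvol$, so its image $\pi(E_\alpha)=\pi(L_{\bar{\alpha}})$ is a single smooth rational curve on $Y$; its class $\baru_\alpha\in\SY$ satisfies $\intf{\baru_\alpha,\baru_\alpha}=-2$ by adjunction, and hence $(\baru_\alpha)\sperp$ bounds $\nefbig(Y)$. For part~(3) and the remaining ten walls, I would take $\barv_\alpha$ to be a primitive generator of the $(-1)$-eigenspace of $j_\alpha$ on $\SY$; this vector can be computed explicitly by descending the known action of the Dolgachev--Keum involution $\iota_\alpha$ on $\SX$ through $\pi^*$. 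A direct calculation should then show that $j_\alpha$ acts on $\SY\tensor\R$ as an isometry fixing $(\barv_\alpha)\sperp$ pointwise and mapping $\DY$ to the adjacent chamber across $\DY\cap(\barv_\alpha)\sperp$.

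The main obstacle is completeness: proving that these $20$ walls actually bound one single chamber $\DY$ of the $\Aut(Y)$-action, with no additional wall missed and no two of them defining the same wall of the candidate region. Here I would combine Theorem~\ref{thm:genAut}, which supplies the generators $j_\alpha$, with a tiling argument: starting from the candidate region $\DY$ and successively applying the $j_\alpha$ across the walls $(\barv_\alpha)\sperp$ must sweep out all of $\nefbig(Y)$, since the $j_\alpha$ generate $\Aut(Y)$. The most efficient implementation is to lift to $X$ and run the Borcherds algorithm of~\cite{MR3456710} restricted to the $\enrinvol$-invariant slice of $\SX\tensor\R$; the resulting combinatorial output must match exactly the set of $20$ walls $\{(\baru_\alpha)\sperp,(\barv_\alpha)\sperp\}_{\alpha\in A}$. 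This reduces the verification to a finite, computer-assisted check carried out at the level of $\DX$ and its interaction with $\enrinvol$.
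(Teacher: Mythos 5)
Your overall strategy---restrict the induced chamber $\DX$ to the $\enrinvol$-invariant slice $\SY\tensor\R=\SX^+\tensor\R$, compute the walls of $\DY=\PPP_Y\cap\DX$ there, and then tile $\nefbig(Y)$ by moving $\DY$ across its inner walls---is exactly the route the paper takes, and your treatment of part (2) is correct. However, your mechanism for part (3) contains a genuine error. You propose to define $\barv_{\alpha}$ as ``a primitive generator of the $(-1)$-eigenspace of $j_{\alpha}$ on $\SY$'' and to verify that $j_{\alpha}$ ``fixes $(\barv_{\alpha})\sperp$ pointwise.'' This presupposes that $\barg_{\alpha}$ is a reflection of $\SY$, i.e.\ that its $(-1)$-eigenspace is one-dimensional. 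It is not: the eigenvalue $1$ of $\barg_{\alpha}$ on $\SY$ has multiplicity $6$ (see the Remark after Proposition~\ref{prop:autYgens}), so the $(-1)$-eigenspace is four-dimensional, your $\barv_{\alpha}$ is not well defined, and $j_{\alpha}$ fixes no hyperplane pointwise. The correct definition is $\barv_{\alpha}=2\,\pr^+(v_{\alpha})$, where $v_{\alpha}\in\SX\dual$ is the primitive defining vector of the type (b) inner wall of $\DX$ characterized by~\eqref{eq:valpha}; adjacency is then established not by a reflection argument but by checking that $\DY^{\barg_{\alpha}}$ is again an induced chamber (this requires Proposition~\ref{prop:DXg}, i.e.\ lifting $g_{\alpha}$ to $\OG(L_{26})$ via the period condition $\eta_{\SX}(g_{\alpha})=\pm1$), that $\barv_{\alpha}^{\barg_{\alpha}}=-\barv_{\alpha}$, and that $\intfY{\hY^{\barg_{\alpha}},\barv}>0$ for every other primitive defining vector $\barv$ of a wall of $\DY$.

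A second, logical, problem is your appeal to Theorem~\ref{thm:genAut} for the completeness step. In the paper that theorem is a \emph{consequence} of Theorem~\ref{thm:DY} (via Corollary~\ref{cor:autYgen} and Proposition~\ref{prop:rels}), so invoking it here is circular. The non-circular argument runs the other way: one shows that $\nefbig(Y)$ is tessellated by induced chambers, that for \emph{every} wall of $\DY$ there is an isometry carrying $\DY$ to the adjacent induced chamber ($\barg_{\alpha}$ for the inner walls $(\barv_{\alpha})\sperp$, the reflections $\barsigma_{\alpha}$ for the outer walls $(\baru_{\alpha})\sperp$), so the embedding $\SY(2)\inj L_{26}$ is of simple Borcherds type; combined with $\aut(\DY)\cap\aut(Y)=\{1\}$ and the fact that every $\barg\in\aut(Y)$ maps $\DY$ to an induced chamber, this yields simultaneously that the $\barg_{\alpha}$ generate $\aut(Y)$ and that $\DY$ is a fundamental domain. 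Finally, to state the result for $\Aut(Y)$ rather than $\aut(Y)$ you also need the faithfulness of $\Aut(Y)\to\OG^+(\SY)$ (Theorem~\ref{thm:Autinj}), which your proposal does not address.
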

Let $Z$ be an Enriques surface.
Then an elliptic fibration $\phi\colon Z\to \P^1$
has exactly two multiple fibers $2E_1$ and $2E_2$.
\begin{theorem}\label{thm:ellfibs}
Up to the action of $\Aut(Y)$,
the Enriques surface $Y$ has exactly $1+10+5+5$ elliptic fibrations.
Their $ADE$-types of reducible fibers are given in Table~\ref{table:ellfibs},
in which $\Rfull$ and $\Rhalf$ denote
the $ADE$-types of non-multiple reducible fibers
and  of the half of the multiple reducible fibers,
respectively.
\end{theorem}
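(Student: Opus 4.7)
The plan is to combine the standard correspondence between elliptic fibrations on an Enriques surface and primitive isotropic nef classes with the explicit fundamental domain $\DY$ produced by Theorem~\ref{thm:DY}.

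First I would recall that an elliptic fibration $\phi\colon Y\to \P^1$ is determined by the numerical class of a half-fiber: since $F\sim 2E_1\sim 2E_2$, the common class $f:=[E_1]=[E_2]\in \SY$ is primitive, isotropic, and nef, and conversely every primitive isotropic vector in the closure of $\nefbig(Y)$ is the half-fiber class of a unique elliptic fibration. Two fibrations lie in the same $\Aut(Y)$-orbit exactly when their half-fiber classes do. Hence counting $\Aut(Y)$-orbits of elliptic fibrations reduces to enumerating primitive isotropic vectors $f \in \SY$ that satisfy the $20$ defining inequalities of $\DY$.

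By Theorem~\ref{thm:DY} these inequalities are $\intf{f,\baru_\alpha}\ge 0$ and $\intf{f,\barv_\alpha}\ge 0$ for $\alpha\in A$, and since $\DY$ has finite volume in $\PPP_Y$ only finitely many primitive isotropic rays can satisfy them. I would generate all such $f$ by the Borcherds-style algorithm of~\cite{MR3456710}, carried out inside $\SY$ rather than $\SX$. The expected output is $1+10+5+5=21$ classes, partitioned according to the induced action of $\SSSS_5$ on the construction.

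Finally, for each enumerated $f$, I would compute the $ADE$-type of the fibration by collecting the $(-2)$-classes in $f^\perp\subset \SY$ that are represented by smooth rational curves on $Y$, forming their dual graph, and decomposing into connected components; by Kodaira's classification each component is an $ADE$ diagram. A component whose classes sum with Kodaira multiplicities to $f$ is half of a multiple reducible fiber and contributes to $\Rhalf$, while one summing to $2f$ is a non-multiple reducible fiber and contributes to $\Rfull$. The main obstacle is identifying precisely which $(-2)$-classes of $f^\perp$ are represented by smooth rational curves on $Y$ rather than merely by effective divisors; I would resolve this by pulling back to $X$, where the $(-2)$-curves and the chamber $\DX$ are already listed explicitly from the Dolgachev-Keum analysis, and projecting through $\pi$ while using $\enrinvol$ to identify pairs of $(-2)$-curves on $X$ that are mapped to the same curve on $Y$.
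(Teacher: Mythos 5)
Your overall strategy --- identify elliptic fibrations with primitive isotropic nef half-fiber classes, move them into the fundamental domain $\DY$, and read off the reducible fibers from the smooth rational curves orthogonal to the half-fiber class --- is the paper's strategy, and the second half of your argument (determining which $(-2)$-classes in $f\sperp$ of degree less than the fiber degree are classes of smooth rational curves by descending from $X$ via $\pi$ and $\enrinvol$, then sorting components into $\Rfull$ and $\Rhalf$ by their multiplicity-weighted degree) matches what the paper does in Sections 5.2, 5.3 and 6.2.

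There is, however, a genuine gap in your counting step. Enumerating the primitive isotropic vectors $f\in\SY$ satisfying the $20$ inequalities $\intf{f,\baru_\alpha}\ge 0$, $\intf{f,\barv_\alpha}\ge 0$ does not yield $21$ objects: it yields the $57$ ideal one-dimensional faces of $\DY$. These isotropic rays lie on the boundary of the fundamental domain, precisely where $\DY$ meets its $\Aut(Y)$-translates, so distinct rays among the $57$ can be --- and in fact are --- identified by elements of $\Aut(Y)$ that do not preserve $\DY$. A fundamental domain gives a bijection between orbits and interior points, not boundary points, so your reduction ``counting orbits of fibrations reduces to enumerating primitive isotropic vectors in $\DY$'' is only a surjection onto the set of orbits. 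Moreover, since $\aut(\DY)\cap\aut(Y)$ is trivial (Proposition~\ref{prop:autDY}), the required identifications are not produced by the $\SSSS_5$-symmetry of $\DY$ that you invoke; they must be computed from the local structure of the tessellation of $\nefbig(Y)$ by the chambers $\DY^{\barg}$ around each ideal ray. The paper does this by generating the equivalence relation from the elementary moves $F\mapsto F^{\barg_{\alpha}}$ across the inner walls $\DY\cap(\barv_{\alpha})\sperp$ containing the face (Definition~\ref{def:autYclass} and Proposition~\ref{prop:faces}), finding that the $57$ ideal faces fall into exactly $21$ classes, which then give the $1+10+5+5$ pattern of Table~\ref{table:ellfibs}. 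Without this step your enumeration overcounts the fibrations by nearly a factor of three. (A smaller imprecision: the smooth rational curves on $X$ are not ``already listed explicitly''--- there are infinitely many, and only the twenty curves $E_\alpha$, $L_\beta$ are given; one still needs the degree-bounded enumeration of Lemma~\ref{lem:VVV} or an equivalent.)
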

An \emph{$\RDP$-configuration} on an Enriques surface $Z$
is the exceptional divisor of a birational morphism
$Z\to \bar{Z}$,
where $\bar{Z}$ has only rational double points as its singularities.
The support of an  $\RDP$-configuration is an $ADE$-configuration of smooth rational curves.
\begin{table}
$$
\arraycolsep=6pt
\renewcommand{\arraystretch}{1.1}
\begin{array}{cc}
[\Rfull, \Rhalf] & \textrm{number} \\
\hline
{[\emptyset, A_4]} & 1 \\
{[A_5+A_1, \emptyset]} & 10 \\
{[D_5, \emptyset]} & 5 \\
{[E_6, \emptyset]} & 5 \\
\end{array}
$$
\vskip .2cm
\caption{Elliptic fibrations on $Y$}\label{table:ellfibs}
\end{table}
\begin{theorem}\label{thm:RDP}
Up to the action of  $\Aut(Y)$,
the Enriques surface $Y$ has exactly $33$ non-empty $\RDP$-configurations.
Their $ADE$-types  are given in Table~\ref{table:RDP}.
\footnote{Added in 2020 August:
In the previous version of this paper, the numbers of $\Aut(Y)$-equivalence classes
of rational double points
were calculated wrongly.
See Section~\ref{subsec:RDP}.}
\end{theorem}
\begin{table}
\renewcommand{\arraystretch}{1.1}
$$
\arraycolsep=6pt
\begin{array}{cc}
 \hbox{\rm $ADE$-type} & {\rm number} \\
 \hline
E_{6} & 1\\
A_{5}+A_{1} & 5\\
3A_{2} & 1\\
D_{5} & 1\\
A_{5} & 1\\
A_{4}+A_{1} & 1\\
A_{3}+2A_{1} & 5\\
2A_{2}+A_{1} & 1\\
D_{4} & 1\\
A_{4} & 1\\
\end{array}
\qquad\quad
\begin{array}{cc}
 \hbox{\rm $ADE$-type} & {\rm number} \\
 \hline
A_{3}+A_{1} & 1\\
2A_{2} & 1\\
A_{2}+2A_{1} & 1\\
4A_{1} & 5\\
A_{3} & 1\\
A_{2}+A_{1} & 1\\
3A_{1} & 2\\
A_{2} & 1\\
2A_{1} & 1\\
A_{1} & 1\\
\end{array}
$$
\vskip .2cm
\caption{$\RDP$-configurations on $Y$}\label{table:RDP}
\end{table}
\begin{remark}
In~\cite{ShimadaRDPEnr},
all $\RDP$-configurations on complex Enriques surfaces are classified
by some lattice theoretic equivalence relation.
\end{remark}
The  lattice $\SZ$ of numerical equivalence classes of divisors on an Enriques surface $Z$ is isomorphic to
the even unimodular hyperbolic lattice $\Lten$ of rank $10$,
which is unique up to isomorphism.
The group $\OG^+(\Lten)$
of isometries of $\Lten$ preserving a positive cone $\PPP_{10}$ of $\Lten\tensor\R$
is generated by the reflections with respect to the roots.
Vinberg~\cite{MR0422505} determined the shape of
a standard fundamental domain of
the action of $\OG^+(\Lten)$ on $\PPP_{10}$.
(See Section~\ref{subsec:L10}.)
Hence we call these fundamental domains
\emph{Vinberg chambers}.
\par
In~\cite{MR718937}, Barth and Peters determined the automorphism group $\Aut(\Zgen)$ of
a \emph{generic} Enriques surface $\Zgen$. (See also Nikulin~\cite{NikulinEnriques}.)
Let $\PPP_{\Zgen}$ be the positive cone of $\SZgen\tensor\R$
containing an ample class.
We identify  $\SZgen$  with  $\Lten$
by an isometry  that maps $\PPP_{\Zgen}$ to $\PPP_{10}$.
Since $\Zgen$  contains no smooth rational curves,
we have
$\nefbig(\Zgen)= \PPP_{10}$.
Note that the discriminant form $q_{\Lten(2)}$  of $\Lten(2)$
is a quadratic form over $\F_2$ with Witt defect $0$,
and hence its automorphism group $\OG(q_{\Lten(2)})$ is isomorphic to $\GO^+_{10}(2)$
in the notation of~\cite{ATLAS}.
Moreover, the natural homomorphism
$$
\rho\colon \OG^+(\Lten) \to \OG(q_{\Lten(2)})\cong \GO^+_{10}(2)
$$
is surjective.
It was shown in~\cite{MR718937} that
the natural representation of $\Aut(\Zgen)$
on $\SZgen\cong \Lten$ identifies $\Aut(\Zgen)$
with the kernel of $\rho$.
In particular,
$\Aut(\Zgen)$ is isomorphic to a normal  subgroup of $\OG^+(\Lten)$ with index
$$
|\GO^+_{10}(2)|=2^{21}\cdot 3^5 \cdot 5^2\cdot 7\cdot 17\cdot 31=46998591897600.
$$
By the following theorem,
we can describe
the way how the automorphism group changes in $\OG^+(\Lten)$
under the specialization from $\Zgen$ to $Y$ (see Remark~\ref{rem:specialization}).
\begin{theorem}\label{thm:906608640}
Under an isometry $\SY\cong \Lten$ that maps $\PPP_Y$ to $\PPP_{10}$,
the  fundamental domain $\DY$
 in Theorem~\ref{thm:DY} is a union of following number of Vinberg chambers:
$$
2^{14}\cdot 3\cdot 5\cdot 7\cdot 17\cdot 31 = 906608640.
$$
\end{theorem}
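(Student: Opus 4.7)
First, by Theorem~\ref{thm:Autinj}, $\Aut(Y)$ embeds faithfully into $\OG^+(\SY)\cong \OG^+(\Lten)$, and I identify it with its image throughout.  Let $W_0\subset \OG^+(\Lten)$ be the reflection subgroup generated by $s_C$ as $C$ ranges over all smooth rational curves on $Y$; then $\nefbig(Y)$ is a standard fundamental chamber of $W_0$ acting on $\PPP_{10}$.  By Theorem~\ref{thm:DY}(2), every such $C$ is $\Aut(Y)$-equivalent to one of the ten curves $\pi(E_\alpha)$, so $\Aut(Y)$ normalises $W_0$ and $W_0$ is contained in the subgroup $G:=\langle \Aut(Y),\;s_{\baru_\alpha}:\alpha\in A\rangle$ of $\OG^+(\Lten)$.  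Combining the identity $\PPP_{10}=W_0\cdot\nefbig(Y)$ (the fundamental-chamber property of $W_0$) with Theorem~\ref{thm:DY} (which gives $\nefbig(Y)=\Aut(Y)\cdot\DY$), $\DY$ becomes a fundamental domain of $G=\Aut(Y)\cdot W_0$ acting on all of $\PPP_{10}$.

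Now since $\OG^+(\Lten)=W(\Lten)$ acts simply transitively on the set of Vinberg chambers (Vinberg's theorem, as recalled in the introduction), the number of Vinberg chambers contained in $\DY$ equals the index $[\OG^+(\Lten):G]$.  To evaluate this index I would invoke the Barth--Peters exact sequence
\begin{equation*}
1\longrightarrow \Aut(\Zgen)\longrightarrow \OG^+(\Lten)\stackrel{\rho}{\longrightarrow}\GO^+_{10}(2)\longrightarrow 1,
\end{equation*}
together with the crucial inclusion $\Aut(\Zgen)=\ker\rho\subseteq G$; granting this, $[\OG^+(\Lten):G]=[\GO^+_{10}(2):\rho(G)]$.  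The image $\rho(G)$ is generated by the twenty elements $\rho(j_\alpha)$ and $\rho(s_{\baru_\alpha})$, each of which is read off from the explicit action of $j_\alpha$ and $s_{\baru_\alpha}$ on the Gram matrix of $\SY$.  A direct computation of these twenty matrices modulo~$2$ (equivalently, of their action on the $\F_2$-quadratic form $q_{\Lten(2)}$) yields $|\rho(G)|=51840$, and therefore $[\OG^+(\Lten):G]=46998591897600/51840=906608640$.

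The main obstacle is justifying the inclusion $\Aut(\Zgen)\subseteq G$.  The most natural conceptual route is a specialization argument: any $g\in\ker\rho$ lifts to a Hodge isometry of the $K3$-cover $X$ commuting with the Enriques involution $\enrinvol$, and under degeneration of a generic Enriques surface to $Y$ such an isometry either restricts to an element of $\Aut(Y)$ or becomes one after composition with reflections in the $(-2)$-classes $\baru_\alpha$ that newly become effective on $Y$.  Alternatively, the inclusion can be checked by computer along the lines of~\cite{MR3456710}: apply the Borcherds-method algorithm to a set of Barth--Peters generators of $\Aut(\Zgen)$ and verify that each one can be expressed as a product of an element of $\Aut(Y)$ and a word in the $s_{\baru_\alpha}$.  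As a consistency check, the coincidence $|\rho(G)|=51840=|W(E_6)|$ matches the prominent appearance of $E_6$-type configurations in Tables~\ref{table:ellfibs} and~\ref{table:RDP}.
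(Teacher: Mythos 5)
Your proposal takes a genuinely different route from the paper. The paper proves this theorem by direct enumeration: it groups the Vinberg chambers of $\PPP_{10}$ into ``$\Sigma$-chambers'' (each a union of the $10!$ Vinberg chambers sharing a common vertex of type $f_1$, with $\hY$ itself being such a vertex), observes that the number of Vinberg chambers of a boundary $\Sigma$-chamber lying inside $\DY$ is $|\SSSS_{10}|/|W(\varDelta)|$ for an explicitly computable sub-diagram $\varDelta\subset A_9$, and then covers $\DY$ by $\Sigma$-chambers starting from $\hY$ and adds up. No information about $\Aut(\Zgen)$ enters. Your argument instead reduces the count to the index $[\OG^+(\Lten):G]$ for $G=\wtautY=\langle\Aut(Y),\,s_{\baru_\alpha}\rangle$; the reduction itself (that $\DY$ is a fundamental domain of $G$ on $\PPP_{10}$ with trivial stabilizer, using Theorem~\ref{thm:DY} and the fact that $\nefbig(Y)$ is a standard fundamental domain of the reflection group $W_0$) is fine and matches Theorem~\ref{thm:compareorder}(2) of the paper.

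The genuine gap is exactly the one you flag: the inclusion $\Aut(\Zgen)=\ker\rho\subseteq G$. Neither of your two suggested justifications closes it. The specialization sketch is not a proof --- it is not clear why a Hodge isometry of a generic Enriques surface should, after degeneration, land in the group generated by $\Aut(Y)$ and the ten reflections $s_{\baru_\alpha}$ rather than requiring reflections in other $(-2)$-classes; indeed establishing this is essentially equivalent to the statement being proved. The computational route is explicitly ruled out by the paper: extracting generators of $\ker\rho$ from the Barth--Peters description requires Reidemeister--Schreier at index $|\GO^+_{10}(2)|\approx 4.7\times 10^{13}$, which the paper states is impractical (the paper only obtains such generators \emph{after} proving $\aut(\Zgen)=\ker(\rho|_{\wtautY})$, where the index is $51840$). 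Most importantly, your route inverts the paper's logical order: in the paper, the inclusion $\aut(\Zgen)\lhd\wtautY$ and the triviality of $\wtautY\cap\aut(\DY)$ are \emph{deduced} in Theorem~\ref{thm:compareorder} from the chamber count $906608640$ together with the computation $|\rho(\wtautY)|=51840$ and the numerical coincidence $46998591897600=51840\cdot 906608640$. So as written, your proof assumes a statement whose only available proof in this circle of ideas rests on the theorem you are trying to prove; to make your approach self-contained you would need an independent argument for $\ker\rho\subseteq\wtautY$, which is missing.
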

This theorem also gives us a method to calculate a set of generators of $\Aut(\Zgen)$
explicitly. Using this set,
we carry out  an experiment on the entropies of automorphisms of $\Zgen$.
\par
The moduli of quartic Hessian surfaces has been studied by several authors
in order to investigate the moduli of cubic surfaces~(\cite{DvG},~\cite{Koike},~\cite{KondoH}).
In~\cite{DvG}, Dardanelli and  van Geemen studied several interesting subfamilies of this moduli.
It seems to be an interesting problem to investigate
the change of the automorphism group under  specializations of $Y$ to members of these subfamilies
by the method given in this paper.
\par
As is shown in Remark 4\,(2) of~\cite{MukaiOhashi},
there exists a specialization from $Y$ to
the Enriques surface $\YVI$
with $\Aut(\YVI)\cong \SSSS_5$ that appeared in the Nikulin-Kondo's classification
of Enriques surfaces with finite automorphism groups~(\cite{NikulinEnriques}, \cite{KondoFinite}).
Kondo pointed out that
the roots $\baru_{\alpha}$, $\barv_{\alpha}$ defining the walls
of $\DY$ given in Theorem~\ref{thm:DY} have the same configuration as
the smooth rational curves on $\YVI$.
(Compare~\eqref{eq:intnumbsuuww},~\eqref{eq:intnumbsuw}
and Figure 6.4 of~~\cite{KondoFinite}.)
It is also an interesting problem to investigate
the change of the automorphism group under various generalizations from
the seven Enriques surfaces with finite automorphism groups.
\par
The first application of  Borcherds method to the automorphism group
of an Enriques surface was given  in~\cite{Schiermonnikoog},
in which we investigated an Enriques surface whose universal cover is
a  $K3$ surface of Picard number $20$
with the transcendental lattice of discriminant $36$.
\par
This paper is organized as follows.
In Section~\ref{sec:preliminaries},
 we collect preliminaries about lattices and chambers.
 In Section~\ref{sec:unimodular},
 we recall the  results
 on the even unimodular hyperbolic lattices 
due to Vinberg~\cite{MR0422505} and Conway~\cite{MR690711}.
In Section~\ref{sec:Borcherds},
we explain Borcherds method
and its application  to $K3$ surfaces.
In Section~\ref{sec:Enriques},
we present some algorithms to study the geometry of an Enriques surface.
In particular,
we give an application of Borcherds method to an Enriques surface.
In Section~\ref{sec:QH},
we re-calculate, by the algorithm in~\cite{MR3456710},
 the results of Dolgachev and Keum~\cite{MR1897389}
on  the general quartic Hessian surface,
and convert these results into machine-friendly format.
With these preparations,
the main results
are proved in Section~\ref{sec:main}.
In the last section,
we calculate a set of generators of $\Aut(\Zgen)$,
and search for elements of $\Aut(\Zgen)$ with small entropies.
\par
For the computation,
we used {\tt GAP}~\cite{gap}.
The computational data are presented in the author's webpage~\cite{compdataHesseEnr}.
In fact,
once the basis of  the Leech lattice (Table~\ref{table:BasisLeech}),
the basis of $\SX$~\eqref{eq:basisEL},
the embedding $\SX\inj L_{26}$ (Table~\ref{table:embSXL}),
and  the basis of $\SY$ (Table~\ref{table:SXplus})
are fixed,
the other data can be derived by the algorithms in this paper.
\section{Preliminaries}\label{sec:preliminaries}
\subsection{Lattices}\label{subsec:lattices}
%
A submodule $M$ of a free $\Z$-module $L$ is said to be \emph{primitive}
if $L/M$ is torsion-free.
A non-zero vector $v\in L$ is  \emph{primitive} if $\Z v\subset L$
is primitive.
\par
A \emph{lattice} is a free $\Z$-module $L$ of finite rank with a non-degenerate symmetric bilinear form
$$
\intfvoid  \dcolon  L\times L\to \Z.
$$
Let $m$ be a non-zero  integer.
For an lattice $(L, \intfvoid)$,
we denote by $L(m)$
the lattice $(L, m \intfvoid)$.
Every vector of $L\tensor \R$ is written as a \emph{row} vector,
and
the \emph{orthogonal group} $\OG (L)$
of  $L$ acts on $L$ from the \emph{right}.
We put
$$
L\dual:=\Hom(L, \Z), \;\;\;  \LQ:=L\tensor\Q,\;\;\; \LR:=L\tensor\R.
$$
Then we have natural  inclusions $L\inj L\dual\inj \LQ\inj \LR$.
The \emph{discriminant group} of $L$ is defined to be $L\dual/L$.
A lattice $L$ is \emph{unimodular}
if $L\dual/L$ is trivial.
A lattice $L$ of rank $n$ is \emph{hyperbolic} if $n>1$ and the signature of 
 $\LR$ is $(1, n-1)$,
whereas  $L$ is \emph{negative-definite} if the signature is $(0, n)$.
\par
A lattice $L$ is \emph{even}
if $\intf{x, x}\in 2\Z$ for all $x\in L$.
Suppose that $L$ is even.
Then  the \emph{discriminant form}
$$
q_L \dcolon  L\dual/L \to \Q/2\Z
$$
of $L$
is defined by $q_L(x \bmod L):=\intf{x,x}\bmod 2\Z$ for $x\in L\dual$.
See~\cite{MR525944} for the basic properties of discriminant forms.
We denote by $\OG(q_L)$ the automorphism group of the finite quadratic form
$q_L$.
We regard $L\dual$ as a submodule of $L_{\Q}$,  and let $\OG(L)$ act on $L\dual$ from the right.
We have a natural homomorphism
$$
\eta_L \dcolon  \OG(L)\to \OG(q_L).
$$
\par
A vector $r\in L$ with $\intf{r,r}=-2$ is called a \emph{root}.
A root $r\in L$ defines a \emph{reflection}
$$
s_r \dcolon  x\mapsto x +\intf{x, r} r,
$$
which belongs to $\OG(L)$.
The \emph{Weyl group} $W(L)$ of $L$
is defined to be
the subgroup  of $\OG(L)$
generated by the reflections $s_r$ with respect to all the roots $r$ of $L$.
\par
Let  $L$ be an even hyperbolic lattice.
A \emph{positive cone} of $L$ is one of the two connected components
of $\shortset{x\in \LR}{\intform{x,x}>0}$.
We fix a positive cone $\PPP$ of $L$.
Let $\OG^+(L)$ denote the stabilizer subgroup of $\PPP$
in  $\OG(L)$.
Then $W(L)$ acts on $\PPP$.
For a root $r\in L$,
we put
$$
(r)\sperp:=\set{x\in \PPP}{\intf{x, r}=0}.
$$
The following is obvious:
\begin{proposition}\label{prop:roothyps}
The family $\shortset{(r)\sperp}{\textrm{$r$ is a root of $L$}}$
of hyperplanes of $\PPP$ is locally finite in $\PPP$.
\qed
\end{proposition}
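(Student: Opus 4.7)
The claim is that every compact subset $K \subset \PPP$ meets only finitely many of the hyperplanes $(r)\sperp$ with $r$ ranging over the roots of $L$. My plan is to confine all such roots $r$ to a bounded region of the discrete set $L \subset L_\R$, which will force there to be only finitely many.

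First I would fix an auxiliary Euclidean norm $\|\cdot\|$ on $L_\R$, for instance the one associated with a choice of $\Z$-basis of $L$. Since $L$ is a discrete subgroup of $L_\R$, every bounded subset of $L$ is finite, so it suffices to produce a constant $M > 0$ such that every root $r$ with $(r)\sperp \cap K \neq \emptyset$ satisfies $\|r\| \leq M$.

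The key geometric input is that for each $x \in \PPP$ the orthogonal complement of $x$ in $L_\R$ is negative-definite, which follows from the hyperbolic signature $(1, n-1)$ together with $\intf{x, x} > 0$. Consider the set
$$
S := \{(x, u) \in K \times L_\R : \intf{x, u} = 0,\ \|u\| = 1\},
$$
which is closed in the compact product $K \times \{u \in L_\R : \|u\| = 1\}$ and hence compact. The continuous function $(x, u) \mapsto -\intf{u, u}$ is strictly positive on $S$ by the preceding observation, so it attains a positive minimum $c > 0$. Now if $r \in L$ is any root with $\intf{x_0, r} = 0$ for some $x_0 \in K$, then $u := r/\|r\|$ lies in $S$, and hence $2/\|r\|^2 = -\intf{u, u} \geq c$, giving $\|r\| \leq \sqrt{2/c}$.

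The main (minor) subtlety worth verifying is that the minimum of $-\intf{u, u}$ on $S$ is uniformly strictly positive, i.e., that the family of negative-definite quadratic forms obtained by restricting $\intfvoid$ to the orthogonal complement of $x$ does not degenerate as $x$ varies over the compact set $K$; this is handled cleanly by the compactness of $S$ together with the continuity of $(x, u) \mapsto \intf{u, u}$. Once the uniform norm bound is in hand, the finiteness of $\{r \in L : \|r\|^2 \leq 2/c\}$ concludes the proof.
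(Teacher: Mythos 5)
Your proof is correct; the paper gives no argument at all for this proposition (it is introduced with ``The following is obvious''), and your compactness argument --- using that the form is uniformly negative definite on the orthogonal complements of points ranging over a compact set $K$ to bound the norm of any root whose hyperplane meets $K$, and then invoking discreteness of $L$ --- is exactly the standard justification being taken for granted. The only point worth a passing remark is that ``every compact subset meets finitely many hyperplanes'' yields local finiteness because $\PPP$ is open in $L\tensor\R$ and hence every point has a compact neighborhood inside $\PPP$.
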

%
A \emph{standard fundamental domain}
of the action of $W(L)$ on $\PPP$ is the closure
in $\PPP$ of a connected component of
$$
\PPP\;\;\setminus\;\;\bigcup_{r}\, (r)\sperp,
$$
where $r$ runs through the set of all roots.
Let $D$ be one of the  standard fundamental domains of $W(L)$.
We put
$$
\aut(D):=\set{g\in \OG^+ (L)}{D^g=D}.
$$
Then we have
$\OG^+ (L)=W(L)\rtimes\aut(D)$.
\subsection{$\VR$-chambers}\label{subsec:VRchambers}
Let $V$  be a $\Q$-vector space of dimension $n>0$,
and let $V\spstar$ denote the dual $\Q$-vector space $\Hom(V, \Q)$.
We put $\VR:=V\tensor\R$.
For a non-zero linear form  $f\in V\spstar\setminus\{0\}$,
we put
$$
H_f:=\shortset{x\in \VR}{f(x)\ge 0},\qquad
[f]\sperp:=\shortset{x\in \VR}{f(x)=0}=\partial H_f.
$$
%
\begin{definition}
A closed subset $\barC$ of  
$\VR$ is called
a \emph{$\VR$-chamber}
if $\barC$ contains a non-empty open subset of $\VR$, and
there exists a subset $\FFF$ of $V\spstar\setminus \{0\}$ such that
$$
\barC\;=\;  \bigcap_{f\in \FFF}{H_f}.
$$
When this is the case,
we say that  \emph{$\FFF$  defines the $\VR$-chamber $\barC$}.
\end{definition}
\par
Suppose that a subset $\FFF$ of $V\spstar\setminus\{0\}$
defines a $\VR$-chamber $\barC$.
We assume
the following:
\begin{equation}\label{eq:distinctassump}
H_f\ne H_{f\sprime}\;\;
\text{for  distinct $f, f\sprime\in \FFF$.}
\end{equation}
We say that an element $f$ of $V\spstar\setminus\{0\}$ \emph{defines a wall of $\barC$}
if  $\barC$ is contained in $H_f$ and $\barC\cap [f]\sperp$
contains a non-empty open subset of  $[f]\sperp$.
When this is the case,
we call $\barC\cap [f]\sperp$ the \emph{wall of $\barC$ defined by $f$}.
By the assumption~\eqref{eq:distinctassump},
we see that $f_0 \in  \FFF$ defines  a wall of $\barC$
if and only if there exists a point $x\in V$ such that
$$
f_0(x)<0,  \;\;\;\; \textrm{and}\;\; f(x)\ge 0\;\;\textrm{for all $f\in \FFF\setminus \{f_0\}$}.
$$
Hence we have the following. 
%
\begin{algorithm}\label{algo:wall}
Suppose that a $\VR$-chamber $\barC$
is defined by a finite subset $\FFF$ of  $V\spstar\setminus \{0\}$ satisfying~\eqref{eq:distinctassump}.
Then an element  $f_0 \in\FFF$ defines  a wall of $C$
if and only if the solution of the following problem of linear programing on $V$ over $\Q$
is unbounded to $-\infty$:
find the minimal value of  $f_0(x)$ subject to the constraints $ f(x)\ge 0$ for all $f\in \FFF\setminus \{f_0\}$.
\end{algorithm}
\par
Let $\barC$ and $\FFF$ be as above.
We define the \emph{faces of dimension $k$} of $\barC$
for $k=n-1, \dots, 1$ by descending induction on $k$.
The following is obvious.
\begin{lemma}
Suppose that $n>1$, and
 that $f_0\in\FFF$ defines a wall of $\barC$.
 For $g\in V\spstar$,
 let $g|_{ f_0\sperp}\colon f_0\sperp\to \Q$ denote the restriction of
 $g$
 to the hyperplane $f_0\sperp$ of $V$.
Then the wall $\barC\cap [f_0]\sperp$ of $\barC$ defined by $f_0$ is an $[f_0]\sperp$-chamber defined
by
$$
\FFF|_{f_0\sperp}:=\set{\,g|_{ f_0\sperp}}{g\in \FFF,\;\; g|_{ f_0\sperp}\ne 0}.
$$
\end{lemma}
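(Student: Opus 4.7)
The plan is to verify directly the two conditions in the definition of a $[f_0]\sperp$-chamber applied to the wall $W := \barC\cap [f_0]\sperp$: that $W$ contains a non-empty open subset of $[f_0]\sperp$, and that $W$ is the intersection of the closed half-spaces associated with the linear forms in $\FFF|_{f_0\sperp}$.

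The first condition is immediate: by the very definition of $f_0$ defining a wall of $\barC$, the intersection $W$ contains a non-empty open subset of $[f_0]\sperp$. For the second condition, I would start from the presentation $\barC=\bigcap_{f\in\FFF} H_f$ and intersect with $[f_0]\sperp$, analyzing one factor at a time. If $f\in\FFF$ satisfies $f|_{f_0\sperp}\ne 0$, then $H_f\cap [f_0]\sperp$ is the closed half-space $H_{f|_{f_0\sperp}}$ cut out inside $[f_0]\sperp$ by the non-zero linear form $f|_{f_0\sperp}$. If $f|_{f_0\sperp}=0$, then $f$ is a scalar multiple $\lambda f_0$, and $H_f\cap [f_0]\sperp=[f_0]\sperp$ contributes nothing to the intersection. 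Combining, one obtains
$$
W \;=\; \bigcap_{g\in \FFF|_{f_0\sperp}} H_g,
$$
which is precisely a $[f_0]\sperp$-chamber defined by $\FFF|_{f_0\sperp}$.

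The only subtlety, and what I would treat as the main (though small) obstacle, is to rule out the sign pathology in the case $f|_{f_0\sperp}=0$. I must check that any such $f=\lambda f_0\in \FFF$ has $\lambda>0$; otherwise $H_f$ would be the opposite half-space to $H_{f_0}$, and one would have $\barC\subset H_f\cap H_{f_0} = [f_0]\sperp$, contradicting the requirement that a $\VR$-chamber contain a non-empty open subset of $\VR$. Once this is noted, each such $f$ contributes a redundant half-space, and the description above goes through. The condition~\eqref{eq:distinctassump} is not needed for $\barC\cap [f_0]\sperp$ itself, but one may invoke it implicitly to ensure the terms of $\FFF|_{f_0\sperp}$ are genuinely the restrictions described.
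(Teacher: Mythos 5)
Your proof is correct, and it is exactly the direct unwinding of the definitions that the paper has in mind when it labels this lemma ``obvious'' and omits the proof. The one remark worth making is that your ``sign pathology'' is not actually needed for the intersection formula, since for any $f=\lambda f_0$ with $\lambda\ne 0$ one has $[f_0]\sperp\subset\partial H_f$ and hence $H_f\cap[f_0]\sperp=[f_0]\sperp$ regardless of the sign of $\lambda$; your observation that $\lambda<0$ is impossible is nonetheless a valid consistency check with $\barC$ being full-dimensional.
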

The faces of $\barC$ of dimension $n-1$ are defined to be  the walls of $\barC$.
Suppose that $0< k<n-1$,
and let $F$ be a $(k+1)$-dimensional face of $\barC$.
Let $\gen{F}$ denote the minimal linear subspace of $V$ containing $F$.
We assume that
(i)
the linear space $\gen{F}$ is of dimension $k+1$,
(ii)
$F$ is equal to the closed subset  $\barC\cap\gen{F}$ of $\barC$,
and
(iii)
$F$ is an $(\gen{F}\tensor\R)$-chamber
defined by the subset
$$
\FFF|_{\gen{F}}:=\set{\; g|_{\gen{F}}}{\;g\in \FFF, \;\; g|_{\gen{F}}\ne 0}
$$
of $\Hom(\gen{F}, \Q)\setminus\{0\}$,
where $g|_{\gen{F}}$ is the restriction of $g$ to $\gen{F}$.
Then  the walls of the $(\gen{F}\tensor\R)$-chamber $F$ are defined.
A face of dimension $k$ of $\barC$ is defined to be a wall of a $(k+1)$-dimensional face of $\barC$.
It is obvious that $k$-dimensional faces satisfy the assumptions (i), (ii), (iii),
and hence the induction proceeds.
\par
When $\FFF$ is finite,
we can calculate all the faces of $\barC$
by using  Algorithm~\ref{algo:wall} iteratively.
\begin{remark}\label{rem:remove}
At every step of iteration,
we must remove redundant elements from $\FFF|_{\gen{F}}$
to obtain a subset  $\FFF\sprime_{\gen{F}}\subset \FFF|_{\gen{F}}$
that defines the walls of $F$ and satisfies~\eqref{eq:distinctassump}.
\end{remark}
\subsection{Chambers}\label{subsec:chambers}
Let $V$ be as in the previous subsection.
Suppose that $n>1$,  and that $V$ is
equipped with a non-degenerate symmetric bilinear form
$$
\intfvoid \dcolon  V\times V\to \Q
$$
such that
$\VR=V\tensor \R$ is of signature $(1, n-1)$.
By $\intfvoid$,
we identify $V$ and $V\spstar$.
In particular,
for a non-zero vector $v$ of $V$,
we put
$$
H_v:=\shortset{x\in \VR}{\intf{v, x}\ge 0},
\qquad
[v]\sperp:=\shortset{x\in \VR}{\intf{v, x}=0}=\partial H_v.
$$
Let $\PPPV$ be one of the two connected components
of $\shortset{x\in \VR}{\intf{x,x}>0}$, and
let $\closure{\PPP}_V$ denote the closure of $\PPP_V$ in $\VR$.
For a non-zero vector $v$ of $V$,
we put
$$
(v)\sperp:=[v]\sperp\cap \PPPV,
$$
which is non-empty if and only if $\intf{v,v}<0$.
%
\begin{definition}
A closed subset $C$ of $\PPPV$ is said to be  a \emph{chamber} if
there exists a subset $\FFF$ of $V\setminus \{0\}$
with the following properties.
\begin{enumerate}[(i)]
\item
The family $\shortset{(v)\sperp}{v\in \FFF, \intf{v,v}<0}$ of hyperplanes of the positive cone $\PPPV$
is locally finite in $\PPPV$.
\item
Under the identification $V=V\spstar$,
the  set $\FFF$ defines a $\VR$-chamber $\barC$ such that
\begin{equation}\label{eq:barCinPPP}
\barC\subset \closure{\PPP}_V
\;\; \textrm{and}\;\; C=\PPP_V\cap \barC.
\end{equation}
%
\end{enumerate}
When this is the case,
we say that the chamber $C$ is \emph{defined by $\FFF$}.
\end{definition}
\begin{remark}
A $\VR$-chamber $\barC$ satisfies $\barC\subset \closure{\PPP}_V$
if and only if $\barC\cap \PPP_V\ne \emptyset$ and
$\barC\cap \partial \,\closure{\PPP}_V$ is contained in the union of  one-dimensional faces of $\barC$.
\end{remark}
Let $C$ be a chamber defined by $\FFF\subset V\setminus \{0\}$.
Let $F$ be a $k$-dimensional face of $\barC$.
If $C\cap F\ne\emptyset$,
we say that $C\cap F$ is a \emph{face} of $C$
of dimension $k$.
Note that,
by~\eqref{eq:barCinPPP},
if $k>1$,
then $C\cap F$ is a face of $C$.
In particular,
since $n>1$,
if $\barC\cap [u]\sperp$ is a wall of $\barC$
defined by $u\in V\setminus\{0\}$,
then $C\cap (u)\sperp$ is  called the  \emph{wall of $C$ defined by $u$}.
\par
When $k=1$, we may have $C\cap F=\emptyset$.
\begin{definition}\label{def:idealface}
A one-dimensional face $F$ of $\barC$  contained in
$\barC \setminus C = \barC\cap \partial \,\closure{\PPP}_V$
is called an \emph{ideal face} of $C$.
By abuse of language, an ideal face of $C$ is also regarded as a face of $C$.
\end{definition}
%
%
%
\subsection{Chambers of a hyperbolic lattice}
Let $L$ be an even hyperbolic lattice with a positive cone $\PPP$.
Applying the above definition to $V=\LQ$,
we have the notion of chambers and their faces.
We mean by a \emph{chamber of $L$} a chamber of $L_{\Q}$.
\begin{definition}
We define the \emph{automorphism group of a chamber $C$ of $L$} by
$$
\aut(C):=\set{g\in \OG^+(L)}{C^g=C}.
$$
\end{definition}
\par
We put
$$
L\dual_{\prim}:=\set{v\in L\dual}{\textrm{$v$ is   primitive in $L\dual$}}.
$$
Then we have a canonical projection
$$
\LQ\setminus\{0\} \to L\dual_{\prim}, \quad v\mapsto \tilv
$$
such that $H_{v}=H_{\tilv}$ holds for all $v\in \LQ\setminus\{0\}$.
\begin{definition}
Let $C\cap (u)\sperp$ be a wall of a chamber $C$ of $L$ defined by $u\in \LQ$.
A vector $v\in L\dual$ is called  the \emph{primitive defining  vector} of the wall  $C\cap (u)\sperp$
if $v$ is the  vector of $L\dual_{\prim}$ satisfying $H_v=H_u$.
By definition,
each wall $C\cap (u)\sperp$ of $C$ has a unique primitive defining  vector $\tilde{u}$.
\end{definition}
If $\FFF\subset \LQ\setminus\{0\}$  defines a chamber $C$, then
so does the set
$$
\tilde{\FFF}:=\set{\tilv}{v\in \FFF}.
$$
The assumption~\eqref{eq:distinctassump}
holds automatically for $\tilde{\FFF}$.
Hence converting $\FFF$ to $\tilde{\FFF}$
is a convenient method
to achieve the property~\eqref{eq:distinctassump}
when we use Algorithm~\ref{algo:wall} iteratively to determine the faces of a chamber
(see Remark~\ref{rem:remove}).
\section{Even unimodular hyperbolic lattices $\Lten$ and $L_{26}$}\label{sec:unimodular}
For each positive integer $n$ with $n\equiv 2 \bmod 8$,
let $L_n$ denote an even unimodular hyperbolic lattice of  rank $n$,
which is unique up to isomorphism.
The lattice $L_2$ is denoted  by $U$.
We fix a basis $f_1, f_2$ of $U$
such that
 the Gram matrix of $U$ with respect to $f_1, f_2$ is
$$
\arraycolsep=4pt
\left[\begin{array}{cc} 0 &1 \\ 1 & 0\end{array}\right].
$$
\par
%
\subsection{The  lattice $\Lten$}\label{subsec:L10}
Let $E_8$ denote the \emph{negative-definite} even unimodular lattice of rank $8$
with the standard basis $e_1, \dots, e_8$,
whose intersection numbers are given by  the Dynkin diagram in Figure~\ref{fig:L10}.
%
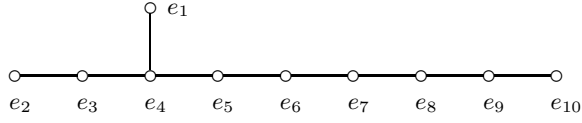
\begin{figure}
\def\ha{40}
\def\hav{37}
\def\hd{25}
\def\hdv{22}
\def\he{10}
\def\hev{7}
\setlength{\unitlength}{1.5mm}
{\small
\begin{picture}(80,11)(-5, 6)
\put(22, 16){\circle{1}}
\put(23.5, 15.5){$e\sb 1$}
\put(22, 10.5){\line(0,1){5}}
\put(9.5, \hev){$e\sb 2$}
\put(15.5, \hev){$e\sb 3$}
\put(21.5, \hev){$e\sb 4$}
\put(27.5, \hev){$e\sb 5$}
\put(33.5, \hev){$e\sb 6$}
\put(39.5, \hev){$e\sb 7$}
\put(45.5, \hev){$e\sb {8}$}
\put(51.5, \hev){$e\sb {9}$}
\put(57.5, \hev){$e\sb {10}$}
\put(10, \he){\circle{1}}
\put(16, \he){\circle{1}}
\put(22, \he){\circle{1}}
\put(28, \he){\circle{1}}
\put(34, \he){\circle{1}}
\put(40, \he){\circle{1}}
\put(46, \he){\circle{1}}
\put(52, \he){\circle{1}}
\put(58, \he){\circle{1}}
\put(10.5, \he){\line(5, 0){5}}
\put(16.5, \he){\line(5, 0){5}}
\put(22.5, \he){\line(5, 0){5}}
\put(28.5, \he){\line(5, 0){5}}
\put(34.5, \he){\line(5, 0){5}}
\put(40.5, \he){\line(5, 0){5}}
\put(46.5, \he){\line(5, 0){5}}
\put(52.5, \he){\line(5, 0){5}}
\end{picture}
}
\caption{Walls of the Vinberg chamber $D_{10}$}\label{fig:L10}
\end{figure}
We use $f_1, f_2, e_1, \dots, e_8$ as a basis of
$$
\Lten:=U\oplus E_8,
$$
and put
\begin{eqnarray*}\label{eq:w10}
e_9&:=& [ 1, 0, -3, -2, -4, -6, -5, -4, -3, -2 ], \\
e_{10}&:=& [ -1, 1, 0, 0, 0, 0, 0, 0, 0, 0 ], \\
w_{10}&:=&[ 31, 30, -68, -46, -91, -135, -110, -84, -57, -29 ].
\end{eqnarray*}
We have $\intf{e_9, e_9}=\intf{e_{10}, e_{10}}=-2$, $\intf{w_{10}, w_{10}}=1240$, and
\begin{equation*}\label{eq2:w10}
\WWW_{10}:=\set{r\in \Lten}{\intf{r, w_{10}}=1, \,\intf{r,r}=-2}=\{e_1, \dots, e_{10}\}.
\end{equation*}
The roots  $e_1, \dots, e_{10}$ form the Dynkin diagram in Figure~\ref{fig:L10}.
Let $\PPP_{10}$ be the positive cone of $\Lten$ containing $w_{10}$,
and $\barPPP_{10}$  the closure of $\PPP_{10}$ in $\Lten\tensor\R$.
We put
$$
\barD_{10}:=\set{x\in \Lten\tensor\R}{\intf{x, e_i}\ge 0 \;\;\textrm{for $i=1, \dots, 10$}\,\,},
\quad
D_{10}:=\barD_{10}\cap \PPP_{10}.
$$
%
\begin{theorem}[Vinberg~\cite{MR0422505}]\label{thm:L10}
The closed subset $D_{10}$ of $\PPP_{10}$ is a chamber.
The chamber $D_{10}$ is a standard fundamental domain of the action of $W(\Lten)$ on $\PPP_{10}$.
Each vector
$e_i$ of $\WWW_{10}$ defines a wall of $D_{10}$.
\end{theorem}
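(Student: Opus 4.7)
The plan is to apply Vinberg's algorithm with $w_{10}$ as the controlling vector. Since $\intf{w_{10}, w_{10}} = 1240 > 0$ and $\intf{w_{10}, e_i} = 1$ for each $i$, the vector $w_{10}$ lies in $\PPP_{10}$ and strictly inside $D_{10}$. The identification $\WWW_{10} = \{e_1, \ldots, e_{10}\}$ is a finite Diophantine check: writing a root $r$ in the fixed basis $f_1, f_2, e_1, \ldots, e_8$ of $\Lten = U \oplus E_8$, the two equations $\intf{r, w_{10}} = 1$ and $\intf{r, r} = -2$ give a bounded system whose integer solutions one enumerates directly.

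The heart of the argument is the claim that every root $r \in \Lten$ with $\intf{r, w_{10}} > 0$ is expressible as $\sum n_i e_i$ with $n_i \in \Z_{\ge 0}$. I would prove this by induction on $N := \intf{r, w_{10}}$; the base case $N = 1$ reduces to $r \in \WWW_{10}$. For $N > 1$, I would argue that some $\intf{r, e_i}$ must be negative: otherwise $r$ lies in the dual cone of $\{e_1, \ldots, e_{10}\}$, which is contained in $\barPPP_{10}$ since $D_{10}$ is a chamber, and then writing out $\intf{r, r}$ against a non-negative combination of the $e_j$ yields $\intf{r, r} \ge 0$, contradicting $\intf{r, r} = -2$. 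Thus some $\intf{r, e_i} < 0$ exists, and the reflected root $s_{e_i}(r) = r + \intf{r, e_i} e_i$ satisfies $\intf{s_{e_i}(r), w_{10}} = N + \intf{r, e_i} < N$, so induction applies and returns $s_{e_i}(r)$, hence $r$, as a non-negative combination of the $e_j$.

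Granted the claim, no reflection hyperplane $(r)^\perp$ crosses the interior of $D_{10}$, so $D_{10}$ lies inside some standard fundamental domain $D$ of $W(\Lten)$ acting on $\PPP_{10}$. Conversely, each $(e_i)^\perp$ is itself the mirror of a reflection in $W(\Lten)$ and bounds $D_{10}$, so it also bounds $D$; hence $D \subseteq D_{10}$. Equality $D = D_{10}$ follows, proving the first assertion. The second assertion, that each $e_i$ defines a wall, amounts to showing non-redundancy of the ten inequalities $\intf{x, e_i} \ge 0$, which can be verified by Algorithm~\ref{algo:wall} by exhibiting, for each $i$, a vector $x_i \in \Lten \otimes \Q$ with $\intf{x_i, e_i} < 0$ and $\intf{x_i, e_j} \ge 0$ for $j \ne i$; the shape of the Dynkin diagram in Figure~\ref{fig:L10} makes such $x_i$ easy to produce.

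The main obstacle is the dual-cone argument inside the inductive step: it relies on the fact that the Gram matrix of $e_1, \ldots, e_{10}$ already has signature $(1, 9)$ matching $\Lten$, so the positive span of the $e_j$ inside $\Lten \otimes \R$ is precisely the cone dual to $D_{10}$ (up to the isotropic boundary), allowing one to pass from $\intf{r, e_i} \ge 0$ for all $i$ to $\intf{r, r} \ge 0$. A cleaner alternative is to cite Vinberg's general termination criterion: once the simple roots found by the algorithm span a sublattice of finite index in $\Lten$ of the correct signature, no further simple roots exist and the resulting chamber is a standard fundamental domain.
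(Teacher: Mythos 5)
The paper does not prove this statement at all --- it is quoted from Vinberg, and the only thing verified in the text is the preliminary fact that $\barD_{10}$ is a chamber with $\barD_{10}\subset\barPPP_{10}$. So you are supplying a proof where the paper gives a citation. Your overall strategy (the height induction behind Vinberg's algorithm) is the standard one, and the step you single out at the end as ``the main obstacle'' is in fact unproblematic: if $\intf{r,e_i}\ge 0$ for all $i$ then $r\in\barD_{10}\subset\barPPP_{10}$, hence $\intf{r,r}\ge 0$ directly; no self-duality of the cone is needed. Likewise the reduction is clean because $e_1,\dots,e_{10}$ form a $\Z$-basis of $\Lten$ and $\intf{e_i,w_{10}}=1$, so the height $N=\intf{r,w_{10}}$ is exactly the coefficient sum of $r$.

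The genuine gap is in the inductive step. From $\intf{r,e_i}<0$ you pass to $s_{e_i}(r)$ of height $N+\intf{r,e_i}<N$ and say ``induction applies,'' but the induction hypothesis covers only roots of \emph{positive} height, and nothing you wrote prevents $N+\intf{r,e_i}\le 0$. Concretely, the Gram matrix of $(r,e_i,w_{10})$ must have non-negative determinant (its span contains $w_{10}$ of positive norm inside a space of signature $(1,9)$), which gives $620\,\intf{r,e_i}^2-N\intf{r,e_i}-N^2\le 2481$; this rules out $N+\intf{r,e_i}<0$ for $N\ge 2$, but for $N=2$ it still permits $\intf{r,e_i}=-2$, i.e.\ a reflected root of height exactly $0$. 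This is not a technicality: a root $r_0$ with $\intf{r_0,w_{10}}=0$ is never addressed in your argument (it is neither ``positive'' nor ``negative''), yet its mirror $(r_0)\sperp$ would pass through $w_{10}$, an interior point of $D_{10}$, so $D_{10}$ would properly contain a standard fundamental domain and the theorem would be \emph{false}. Any correct proof must therefore verify that the rank-$9$ negative-definite orthogonal complement of $w_{10}$ in $\Lten$ contains no vector of norm $-2$ --- a finite enumeration of the same kind as your computation of $\WWW_{10}$. Once that is checked, the case of height $0$ disappears, the determinant bound closes the induction, and the rest of your argument (including the wall non-redundancy via Algorithm~\ref{algo:wall}) goes through. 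Finally, your ``cleaner alternative'' is not Vinberg's termination criterion and is false as stated: accepted roots spanning a finite-index sublattice of the correct signature does not stop the algorithm (for many hyperbolic lattices the algorithm never terminates even though this condition is met after finitely many steps). The actual criterion is that the polyhedron cut out by the accepted roots has finite volume, checked combinatorially from its Coxeter diagram.
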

Since the diagram of the walls of $D_{10}$ in Figure~\ref{fig:L10} has no symmetries,
the automorphism group $\aut(D_{10})$ of  $D_{10}$ is trivial.
Hence $\OG^+(\Lten)$ is equal to  $W(\Lten)$,
which is generated by the reflections with respect to the roots $e_1, \dots, e_{10}$.
\begin{definition}
A standard fundamental domain of the action of $W(\Lten)$ on $\PPP_{10}$
is called a \emph{Vinberg  chamber}.
\end{definition}
%
%
\subsection{The  lattice $L_{26}$}\label{subsec:L26}
Let $\Lambda$ be the \emph{negative-definite} Leech lattice; that is,
the unique even \emph{negative-definite} unimodular lattice of rank $24$ with no roots.
As a basis of $\Lambda$,
we choose the row vectors $\lambda_1, \dots, \lambda_{24}$ of the  matrix given in Table~\ref{table:BasisLeech},
and consider them as elements of the quadratic space $\R^{24}$ with the negative-definite inner product
$$
(x, y)\mapsto -(x_1 y_1+\cdots+ x_{24} y_{24})/8.
$$
This basis is constructed from the extended binary Golay code
in the space of $\F_2$-valued functions
on $\P^1(\F_{23})=\{\infty\}\cup \F_{23}$,
where the value at $\infty$ is at the first coordinate of each row vector~(see Section 2.8 of~\cite{MR2977354}).
We put
$$
L_{26}:=U\oplus \Lambda.
$$
Then the vectors  $f_1, f_2, \lambda_1, \dots, \lambda_{24}$
form a basis of $L_{26}$, which we will use throughout this paper.
We  put
$$
w_{26}:=f_1,
\quad
\WWW_{26} := \set{r\in L_{26}}{\intf{r, w_{26}}=1, \,\intf{r, r}=-2}.
$$
Note that $\intf{w_{26}, w_{26}}=0$.
Let $\PPP_{26}$ be the positive cone of $L_{26}$
that contains $w_{26}$ in its closure $\barPPP_{26}$ in $L_{26}\tensor\R$.
We then put
$$
\barD_{26} := \set{x\in L_{26}\tensor\R}{\intf{x, r}\ge 0\;\;\textrm{for all $r\in \WWW_{26}$}\;},
\quad
D_{26}:=\barD_{26} \cap \PPP_{26}.
$$
\begin{theorem}[Conway~\cite{MR690711}]\label{thm:L26}
The closed subset $D_{26}$ of $\PPP_{26}$ is a chamber.
The chamber
$D_{26}$  is a standard fundamental domain of the action of $W(L_{26})$ on $\PPP_{26}$.
Each vector  $r_{\lambda}$ of $\WWW_{26}$ defines a wall of  $D_{26}$.
\end{theorem}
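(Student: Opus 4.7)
My plan is to follow Conway's original approach~\cite{MR690711}. The discussion preceding the theorem in the excerpt already verifies that $D_{26}$ is a chamber of $L_{26}$: local finiteness of the family $\{(r_\lambda)\sperp\}_{\lambda\in\Lambda}$ holds because $\intf{x,r_\lambda}\to+\infty$ as $|\lambda|\to\infty$ for any fixed $x\in\PPP_{26}$; the interior is non-empty since $\intf{2f_1+f_2,\,r_\lambda}=1-\lambda^2/2>0$ for all $\lambda\in\Lambda$; and $\barD_{26}\subset\barPPP_{26}$ is taken from~\cite{ShimadaHoles}. It remains to prove (i) $D_{26}$ is a \emph{standard} fundamental domain of $W(L_{26})$, and (ii) each $r_\lambda$ defines a wall.

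The heart of (i) is to show that $W_\Lambda:=\gen{s_{r_\lambda}\mid\lambda\in\Lambda}$ equals $W(L_{26})$, by proving every root $r\in L_{26}$ is $W_\Lambda$-conjugate to some $\pm r_\mu$. I would induct on the height $n:=|\intf{r,w_{26}}|\in\Z_{\ge 0}$. The case $n=0$ is vacuous because $\Lambda$ has no roots. The case $n=1$ is the explicit parameterization $\mu\leftrightarrow r_\mu$ of positive height-one roots. For $n\ge 2$, write $r=af_1+nf_2+v$ with $v\in\Lambda$ and $a\in\Z$, constrained by $\intf{r,r}=2an+\intf{v,v}=-2$. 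Writing $|\mu|^2:=-\intf{\mu,\mu}$ for the induced positive-definite norm on $\Lambda\tensor\R$, a direct computation (completing the square in $\lambda$) yields
\[
\intf{r,r_\lambda}\;=\;-n-\tfrac{1}{n}+\tfrac{n}{2}\,|\lambda-v/n|^2.
\]
Since the Leech lattice has covering radius $\sqrt 2$ in this normalization, some $\lambda\in\Lambda$ satisfies $|\lambda-v/n|^2\le 2$, giving $\intf{r,r_\lambda}\le -1/n<0$, hence $\le -1$ by integrality. The reflected root $s_{r_\lambda}(r)=r+\intf{r,r_\lambda}r_\lambda$ then has strictly smaller height; flipping sign if that height becomes negative, the induction closes.

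Once $W_\Lambda=W(L_{26})$ is in hand, the right-angled structure of $D_{26}$ takes over: for distinct $\lambda,\mu\in\Lambda$ one computes $\intf{r_\lambda,r_\mu}=|\lambda-\mu|^2/2-2\ge 0$, with equality precisely when $|\lambda-\mu|^2=4$, so any two walls of $D_{26}$ are either perpendicular or ultraparallel. By standard theory of hyperbolic reflection groups with such a right-angled chamber, the $W_\Lambda$-translates of $D_{26}$ tile $\PPP_{26}$ and $D_{26}$ is a fundamental domain for $W_\Lambda=W(L_{26})$; consequently no root hyperplane crosses $\mathrm{int}(D_{26})$, and $D_{26}$ is a standard fundamental domain. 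For (ii), the Eichler transvections $E_{f_1,\mu}\in\OG^+(L_{26})$ ($\mu\in\Lambda$) fix $w_{26}=f_1$ and send $r_\lambda$ to $r_{\lambda+\mu}$, reducing the wall claim to the case $\lambda_0=0$; then the vector $x=\epsilon f_1+f_2\in\PPP_{26}$ ($0<\epsilon<1$) satisfies $\intf{x,r_0}=\epsilon-1<0$ but $\intf{x,r_\mu}=\epsilon-1+|\mu|^2/2>0$ for every $\mu\ne 0$, using $|\mu|^2\ge 4$ for the Leech minimum, so the $r_0$-wall is not redundant.

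The principal obstacle is the covering-radius input $\sqrt 2$ for the Leech lattice, a non-trivial geometric fact that I would cite from the literature rather than reprove; given this input, the height-reduction argument is the essential clever step enabled by the isotropic choice of Weyl vector $w_{26}=f_1$.
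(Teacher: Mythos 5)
The paper does not prove this statement; it is quoted from Conway's 1983 paper, so there is no internal proof to compare against. Your reconstruction is the standard (and essentially Conway's original) argument, and it checks out: the identity $\intf{r,r_\lambda}=-n-\tfrac1n+\tfrac n2|\lambda-v/n|^2$ is correct, the covering radius $\sqrt2$ of the Leech lattice gives $\intf{r,r_\lambda}\le -1$, and since also $\intf{r,r_\lambda}\ge -n-\tfrac1n$ forces $\intf{r,r_\lambda}\ge -n$ by integrality, the reflected root has height in $[0,n-1]$, so the induction genuinely terminates (heights never overshoot to large negative values — a point you gloss with "flipping sign if necessary," but the bound makes it automatic). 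The computations $\intf{r_\lambda,r_\mu}=\tfrac12|\lambda-\mu|^2-2\ge0$ and the wall test at $x=\epsilon f_1+f_2$ via the Eichler transvections are likewise correct, and invoking Vinberg's theory for a locally finite polyhedron with all dihedral angles $\le\pi/2$ is legitimate here since local finiteness of $\{(r_\lambda)\sperp\}$ was already established. This is a faithful account of the cited proof.
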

%
%
%
%
%
%
%
\begin{definition}
A standard fundamental domain of the action of $W(L_{26})$ on $\PPP_{26}$
is called a \emph{Conway  chamber}.
\end{definition}
A chamber $D\sprime$ in $\PPP_{26}$ is a Conway chamber
if and only if
$D\sprime$ is equal to $D_{26}^g$
for some  $g\in W(L_{26})$.
The \emph{Weyl vector} $w\sprime$ of a Conway chamber $D\sprime=D_{26}^g$ is defined to be $w_{26}^g$,
which is characterized by the following property:
$$
D\sprime=\set{x\in \PPP_{26}}{\intf{x, r}\ge 0 \;\textrm{for all roots $r\in L_{26}$ satisfying $\intf{w\sprime, r}=1$}\;}.
$$
\begin{table}
{
\tiny%
$$
\arraycolsep=4.2pt
\left[ \begin {array}{cccccccccccccccccccccccc}
0&8&0&0&0&0&0&0&0&0&0&0&0&0&0&0&0&0&0&0&0&0&0&0\\
0&4&0&0&0&0&0&0&0&0&0&0&0&0&0&0&0&0&0&0&4&0&0&0\\
0&4&0&0&0&0&0&0&0&0&0&0&0&0&0&0&4&0&0&0&0&0&0&0\\
0&4&0&0&0&0&4&0&0&0&0&0&0&0&0&0&0&0&0&0&0&0&0&0\\
4&4&0&0&0&0&0&0&0&0&0&0&0&0&0&0&0&0&0&0&0&0&0&0\\
0&4&0&0&4&0&0&0&0&0&0&0&0&0&0&0&0&0&0&0&0&0&0&0\\
0&4&0&0&0&0&0&4&0&0&0&0&0&0&0&0&0&0&0&0&0&0&0&0\\
2&2&0&0&2&0&2&2&0&0&2&0&0&0&0&0&2&0&0&0&2&0&0&0\\
0&4&4&0&0&0&0&0&0&0&0&0&0&0&0&0&0&0&0&0&0&0&0&0\\
0&4&0&0&0&0&0&0&0&0&0&0&0&0&0&0&0&0&0&0&0&4&0&0\\
0&4&0&0&0&0&0&0&0&0&0&0&0&0&0&4&0&0&0&0&0&0&0&0\\
0&2&2&0&0&0&2&0&0&0&0&0&0&0&0&2&2&0&0&0&2&2&2&0\\
0&4&0&0&0&0&0&0&0&0&0&0&4&0&0&0&0&0&0&0&0&0&0&0\\
2&2&2&0&2&2&0&0&0&0&0&0&2&0&0&0&0&0&0&0&2&2&0&0\\
2&2&2&0&0&0&0&2&0&0&0&0&2&0&0&2&2&2&0&0&0&0&0&0\\
2&2&2&0&0&0&2&0&0&0&2&0&2&0&2&0&0&0&0&0&0&0&2&0\\
0&4&0&4&0&0&0&0&0&0&0&0&0&0&0&0&0&0&0&0&0&0&0&0\\
2&2&2&2&0&0&0&0&0&0&2&2&0&0&0&0&2&0&0&0&0&2&0&0\\
2&2&2&2&2&0&2&0&0&0&0&0&0&0&0&2&0&0&2&0&0&0&0&0\\
2&2&2&2&0&0&0&2&2&0&0&0&0&0&0&0&0&0&0&0&2&0&2&0\\
2&0&2&2&0&0&2&0&0&0&0&0&2&0&0&0&2&0&0&0&2&0&0&2\\
0&0&2&2&0&2&0&0&0&0&0&2&2&0&0&0&0&0&0&2&0&2&0&2\\
0&0&2&2&0&0&0&0&0&2&0&0&2&0&0&2&0&2&2&0&0&0&0&2\\
1&-3&1&1&1&1&1&1&1&1&1&1&1&1&1&1&1&1&1&1&1&1&1&1
\end {array} \right]
$$
}
\caption{Basis of the Leech lattice $\Lambda$}\label{table:BasisLeech}
\end{table}
  \section{Borcherds method for $K3$ surfaces}\label{sec:Borcherds}
\subsection{Borcherds method}\label{subsec:Borcherds}
We review the method due to  Borcherds~(\cite{MR913200},~\cite{MR1654763})
to calculate a standard fundamental domain  of the Weyl group of an even hyperbolic lattice.
See also~\cite{MR3456710} for the algorithmic description of Borcherds method.
\begin{definition}
In the following, a \emph{tessellation} of a subset $N$ of a positive cone $\PPP$ of a hyperbolic lattice
means a decomposition of $N$ into a union of chambers such that,
if $D$ and $D\sprime$ are distinct chambers in the decomposition,
then the interiors of $D$ and $D\sprime$ are disjoint.
\end{definition}
Let $S$ be an even hyperbolic lattice.
Suppose that we have a primitive embedding
$$
i \dcolon  S\inj L_{26}
$$
of $S$ into the even unimodular hyperbolic lattice $L_{26}$ of rank $26$.
To simplify notation,
we use the same letter $i$ to denote
$i\tensor\R\colon S\tensor \R\inj L_{26}\tensor\R$.
We put
$$
\PPP_S:=i\inv(\PPP_{26}),
$$
where $\PPP_{26}$ is the fixed positive cone of $L_{26}$.
Then $\PPP_S$ is a positive cone of $S$.
We denote by
$$
\pr_S \dcolon  L_{26}\tensor\R\to S\tensor \R
$$
the orthogonal projection to $S\tensor \R$.
Note that $\pr_S(L_{26})\subset S\dual$.
Note also that,
for $v\in L_{26}\tensor\R$, we have
$$
i\inv ((v)\sperp)=(\pr_S(r))\sperp=\set{x\in \PPP_S}{\intf{x, \pr_S(v)}=0},
$$
and hence $i\inv ((v)\sperp) \ne \emptyset$ holds if and only if $\intf{\pr_S(v), \pr_S(v)}<0$.
We put
$$
\RRR_S:=\set{\pr_S(r)}{\textrm{$r$ is a root of $L_{26}$ such that $\intf{\pr_S(r), \pr_S(r)}< 0$  }}.
$$
Then the family of hyperplanes
$$
\HHH_S:=\set{(\pr_S(r))\sperp }{\pr_S(r) \in \RRR_S}
$$
is locally finite in $\PPP_S$
by Proposition~\ref{prop:roothyps}.
We investigate the  tessellation of $\PPP_S$ obtained by this family of hyperplanes.
\par
%
%
A Conway chamber $D\sprime=D_{26}^g$ in $\PPP_{26}$
is said to be \emph{non-degenerate} with respect to  $S$
if the closed subset $i\inv (D\sprime)$ of $\PPP_S$ contains a non-empty open subset of  $\PPP_S$.
When this is the case,
the closed subset $i\inv (D\sprime)$ of $\PPP_S$ is a chamber in $\PPP_S$ defined by the following subset of $\RRR_S$,
where $w\sprime$ is the Weyl vector of $D\sprime$;
$$
\set{\pr_S(r)}{\textrm{$r$ is a root of $L_{26}$ satisfying $\intf{r, w\sprime}=1$ and  $\intf{\pr_S(r), \pr_S(r)}< 0$}}.
$$
\begin{definition}
A chamber of $\PPP_S$ is said to be an \emph{induced chamber}
if  it is of the form
$i\inv (D\sprime)$,
where $D\sprime$ is a Conway chamber  non-degenerate
with respect to $S$.
\end{definition}
Since $\PPP_{26}$ is tessellated by Conway chambers,
the cone $\PPP_S$ is tessellated by induced chambers.
Each induced chamber is the closure in $\PPP_S$ of a connected component of
$$
\PPP_S\;\setminus\; \bigcup_{(v)\sperp\in \HHH_S}\, (v)\sperp.
$$
The following is the main result of~\cite{MR3456710}.
\begin{proposition}\label{prop:R} 
Suppose that the orthogonal complement of $S$ in $L_{26}$ cannot be embedded into
the negative-definite Leech lattice $\Lambda$.
Then each induced chamber $i\inv (D\sprime)$ has only a finite number of walls,
and the set of walls can be calculated from the Weyl vector $w\sprime$ of $D\sprime$.
\end{proposition}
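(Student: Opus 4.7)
The plan is to reduce to the canonical Weyl vector $w_{26} = f_1$ and then bound the Leech-parameter of the roots that can contribute walls. Every Conway chamber has the form $D_{26}^g$ for some $g \in W(L_{26})$, so by composing the embedding $i$ with $g^{-1}$ we may assume $w' = w_{26} = f_1$; this replacement leaves the abstract orthogonal complement $T := i(S)^\perp$ unchanged, hence preserves the hypothesis. In these coordinates, the roots $r \in L_{26}$ with $\intf{r, f_1} = 1$ are parametrized by $\Lambda$ via $r_\lambda = (-\lambda^2/2 - 1) f_1 + f_2 + \lambda$, and the identity $\intf{r_\lambda, r_\lambda} = -2$ shows that $\intf{\pr_S(r_\lambda), \pr_S(r_\lambda)} < 0$ if and only if $\intf{\pr_T(r_\lambda), \pr_T(r_\lambda)} > -2$. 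The walls of $i\inv(D')$ are therefore defined by a subset of the vectors $\pr_S(r_\lambda)$ satisfying this last inequality.

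The key step is to show that $\pr_T(f_1) \ne 0$. Suppose instead that $f_1 \in S \tensor \R$; since $f_1 \in L_{26}$ is rational and the embedding $i$ is primitive, this forces $f_1 \in S$. Then $T \subset f_1^\perp = \Z f_1 \oplus \Lambda$ with $T \cap \Z f_1 \subset T \cap S = 0$, so the quotient map $f_1^\perp \to f_1^\perp / \Z f_1 \cong \Lambda$ restricts to an isometric embedding of $T$ into the Leech lattice $\Lambda$, contradicting the hypothesis. Consequently the scalar $a := \intf{\pr_T(f_1), \pr_T(f_1)}$ is strictly negative, since $T$ is negative-definite.

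Once $a < 0$, expanding $\intf{\pr_T(r_\lambda), \pr_T(r_\lambda)}$ as a polynomial in $\lambda$ shows that the leading term as $|\lambda| \to \infty$ is $(\lambda^2/2)^2 \cdot a$, a quartic expression tending to $-\infty$. Therefore the inequality $\intf{\pr_T(r_\lambda), \pr_T(r_\lambda)} > -2$ confines $\lambda$ to an explicit bounded region of $\Lambda \tensor \R$ containing only finitely many lattice points, and these can be enumerated by short-vector algorithms starting from $w'$ and the projection $\pr_T$. Feeding the corresponding finite list of candidate hyperplanes into Algorithm~\ref{algo:wall} then selects those which actually bound $i\inv(D')$, yielding both the finiteness assertion and the effective calculation. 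The chief obstacle is establishing $\pr_T(f_1) \ne 0$, which is precisely where the embeddability hypothesis is needed to exclude the degenerate case in which the quartic leading term vanishes and infinitely many walls could in principle arise.
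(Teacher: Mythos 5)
Your proof is correct: the key point that non-embeddability of $T$ into $\Lambda$ forces the $T$-component of the Weyl vector to be nonzero (since otherwise $f_1\in S$ and $T$ would embed isometrically into $f_1^{\perp}/\Z f_1\cong\Lambda$), combined with the resulting quartic decay of $\intf{\pr_T(r_\lambda),\pr_T(r_\lambda)}$ as $\lambda$ ranges over $\Lambda$, gives both the finiteness of the candidate set $\shortset{\pr_S(r_\lambda)}{\intf{\pr_S(r_\lambda),\pr_S(r_\lambda)}<0}$ and its effective enumeration, after which Algorithm~\ref{algo:wall} extracts the walls. The paper does not prove Proposition~\ref{prop:R} itself but cites~\cite{MR3456710}, and your argument is essentially the one underlying the algorithm there, so this is the same approach.
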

\par
Every root $r$ of $S$ satisfies $r=\pr_S(r)$
and hence belongs to $\RRR_S$.
Therefore we have the following:
\begin{proposition}\label{prop:stdfundunion}
Each standard fundamental domain of the action of $W(S)$ on $\PPP_S$ is
tessellated by  induced chambers.
\end{proposition}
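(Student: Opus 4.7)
The plan is to show that the tessellation of $\PPP_S$ by induced chambers refines the tessellation by standard fundamental domains of $W(S)$; once this is established, the proposition follows immediately, since each standard fundamental domain becomes the union of those induced chambers it contains. The key observation, already recorded in the paragraph immediately preceding the statement, is that every root $r$ of $S$ satisfies $\pr_S(r)=r$ and has $\intf{r,r}=-2$, hence lies in $\RRR_S$. In particular, for every root $r$ of $S$ the hyperplane $(r)\sperp$ belongs to the family $\HHH_S$.

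I would then carry out the comparison of arrangements as follows. By definition, a standard fundamental domain $F$ of $W(S)$ is the closure in $\PPP_S$ of a connected component of the complement of $\bigcup_{r} (r)\sperp$, where $r$ runs over all roots of $S$. An induced chamber is, by construction, the closure of a connected component of $\PPP_S$ minus the union of the strictly larger family $\HHH_S$. Because the root hyperplanes of $S$ form a sub-family of $\HHH_S$, every connected component of $\PPP_S\setminus\bigcup\HHH_S$ is contained in a unique connected component of the complement of the root arrangement of $S$; taking closures, every induced chamber lies in a unique standard fundamental domain of $W(S)$.

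For the reverse inclusion, I would fix a standard fundamental domain $F$ and let $F^\circ$ denote its interior. The hyperplanes in $\HHH_S$ that are not themselves root hyperplanes of $S$ cut $F^\circ$ into open cells, and the closure in $F$ of each such cell is exactly an induced chamber contained in $F$. Local finiteness of $\HHH_S$ in $\PPP_S$ (as noted in the paragraph defining $\HHH_S$, and which follows from Proposition~\ref{prop:roothyps} applied to $L_{26}$) guarantees that these closed cells cover all of $F$ with no gaps along lower-dimensional strata. Hence $F$ equals the union of the induced chambers it contains.

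I do not anticipate any real obstacle: the argument is the purely formal statement that a finer locally finite hyperplane arrangement produces a refinement of the chamber decomposition of the coarser one. The only piece of geometric content is the inclusion of the root hyperplanes of $S$ into $\HHH_S$, which was established in the text immediately before the proposition.
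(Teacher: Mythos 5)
Your argument is correct and is exactly the paper's: the paper's entire proof is the sentence immediately preceding the proposition, namely that every root $r$ of $S$ satisfies $r=\pr_S(r)$ and hence lies in $\RRR_S$, so the root hyperplanes of $S$ form a subfamily of $\HHH_S$ and the induced-chamber tessellation refines the $W(S)$-chamber decomposition. Your write-up merely spells out the formal refinement argument that the paper leaves implicit.
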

\par
For each wall of an induced chamber $i\inv (D\sprime)$,
 there exists a unique induced chamber that shares the wall with $i\inv (D\sprime)$.
More precisely, if $v\in \RRR_S$ defines a wall $i\inv (D\sprime) \cap (v)\sperp$ of  $i\inv (D\sprime)$,
then there exists a unique induced chamber $i\inv (D\spprime)$
such that $-v\in \RRR_S$ defines a wall $i\inv (D\spprime) \cap (-v)\sperp$ of $i\inv (D\spprime)$, and that we have
$$
i\inv (D\sprime) \cap (v)\sperp=i\inv (D\spprime) \cap (-v)\sperp.
$$
\begin{definition}\label{def:adjacent}
We call  the chamber $i\inv (D\spprime)$ satisfying the properties above the \emph{induced chamber adjacent to $i\inv (D\sprime)$
across the wall $i\inv (D\sprime) \cap (v)\sperp$}.
\end{definition}
\begin{definition}
We say that the primitive embedding $i\colon S\inj L_{26}$
is \emph{of simple Borcherds type}
if, for any two induced chambers $i\inv (D\sprime)$ and $i\inv (D\spprime)$,
there exists an isometry $g\in \OG^+(S)$ such that $i\inv (D\spprime)^g=i\inv (D\sprime)$.
\end{definition}
In order to prove that $i\colon S\inj L_{26}$
is of simple Borcherds type,
it is enough to choose an induced chamber $i\inv (D\sprime)$
and show that,
for each wall $i\inv (D\sprime)\cap (v)\sperp$ of $i\inv (D\sprime)$,
 there exists an element  $g\in \OG^+(S)$ extending to an isometry of $L_{26}$ such that $i\inv (D\sprime)^g$
is the induced chamber adjacent to $i\inv (D\sprime)$
across the wall $i\inv (D\sprime)\cap (v)\sperp$.
%
\subsection{Torelli theorem for  $K3$ surfaces}\label{subsec:Torell}
We recall how to apply  Torelli theorem~\cite{MR0284440} for complex $K3$ surfaces
to the study of the automorphism groups.
Let $X$ be a complex algebraic $K3$ surface.
We denote by $\SX$ the N\'eron-Severi lattice of $X$.
Suppose that $\rank \SX >1$.
Then $\SX$ is an even hyperbolic lattice.
For a divisor $D$ on $X$,
we denote $[D]\in \SX$ the class of $D$.
Let $\PPP_X$ be the positive cone of $\SX$ containing an ample class.
We put
$$
\nefbig(X):=\set{x\in \PPP_X}{\intf{x, [C]}\ge 0\;\;\textrm{for all curves $C$ on $X$}\;}.
$$
It is well-known that  $\nefbig(X)$ is a chamber of $\SX$,
that  $\nefbig(X)$ is a standard fundamental domain of the action of $W(\SX)$
on $\PPP_X$,
and that the correspondence $C\mapsto \nefbig(X)\cap  ([C])\sperp$
gives a bijection from
the set of
smooth rational curves $C$ on $X$
to the set of walls of the chamber $\nefbig(X)$.
\par
Let $\Aut(X)$ denote the automorphism group of $X$
acting on $X$ from the left.
Let $\aut(X)$ be the image
of the natural representation
$$
\varphi_X \dcolon  \Aut(X)\to \OG^+(\SX)
$$
defined by the \emph{pullback}
of the classes of divisors.
Then
we have $\aut(X)\subset \aut(\nefbig(X))$,
where $\aut(\nefbig(X))$ is the automorphism group of the chamber $\nefbig(X)$.
\par
Let $T_X$ denote the orthogonal complement of $\SX=H^2(X, \Z)\cap H^{1,1}(X)$
in the even unimodular lattice $H^2(X, \Z)$ with  the cup product,
and let
$\omega_X$ be a generator of the one-dimensional subspace
$H^{2,0}(X)$ of $T_X\tensor\C$.
We put
$$
\OG^\omega(T_X):=\set{g\in \OG(T_X)}{\omega_X^g\in \C\,\omega_X}.
$$
By~\cite{MR525944}, there exists a unique  isomorphism
$$
\sigma_X \dcolon  q_{S_X}\isom -q_{T_X}
$$
of finite quadratic forms such that the graph of $\sigma_X$ is the image of
  $H^2(X, \Z)\subset S_X\dual \oplus T_X\dual$
by the natural projection to $(S_X\dual /S_X)\oplus(T_X\dual/T_X)$.
We denote by
$$
\sigma_{X*} \dcolon  \OG(q_{S_X})\isom \OG(q_{T_X})
$$
the isomorphism induced by $\sigma_X$.
Recall that
$\eta_{L}\colon \OG(L) \to \OG(q_L)$ denotes the natural homomorphism.
By~\cite{MR525944} again,
an element $g$ of $\OG(\SX)$ extends to an isometry of $H^2(X, \Z)$ preserving the Hodge structure
if and only if
\begin{equation}\label{eq:etaeta}
\sigma_{X*}(\eta_{S_X} (g))\in \eta_{T_X}(\OG^\omega(T_X)).
\end{equation}
By  Torelli theorem for complex $K3$ surfaces~\cite{MR0284440},
we have the following:
\begin{theorem}\label{thm:aut}
{\rm (1)}
An element $g$ of $\aut(\nefbig(X))$ belongs to $\aut(X)$ if and only if  $g$ satisfies the condition~\eqref{eq:etaeta}.
{\rm (2)}
The kernel of $\varphi_X\colon \Aut(X)\to\aut(X)$ is isomorphic to the group
$$
\set{g\in \OG\sp{\omega} (T_X)}{\eta_{T_X}(g)=1}.
$$
\end{theorem}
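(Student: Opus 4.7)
The plan is to derive Theorem~\ref{thm:aut} as a direct consequence of the Torelli theorem for complex $K3$ surfaces~\cite{MR0284440} together with Nikulin's gluing theory of discriminant forms~\cite{MR525944}. The content of both parts is pure bookkeeping about when an isometry of $\SX$ extends to a Hodge isometry of the $K3$ lattice $H^2(X,\Z)$ and, subsequently, is induced by a geometric automorphism.

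For part~(1), the ``only if'' direction is immediate: if $g=\varphi_X(f)$ for some $f\in\Aut(X)$, then $f\sp{*}$ acts on $H^2(X,\Z)$ preserving the Hodge decomposition, so its restriction $g_T:=f\sp{*}|_{T_X}$ lies in $\OG\sp{\omega}(T_X)$. Since $H^2(X,\Z)\subset \SX\dual\oplus T_X\dual$ projects onto the graph of $\sigma_X$ in $(\SX\dual/\SX)\oplus(T_X\dual/T_X)$, the pair $(g,g_T)$ must preserve this graph, which is exactly the compatibility $\sigma_{X*}(\eta_{\SX}(g))=\eta_{T_X}(g_T)$ and in particular forces~\eqref{eq:etaeta}. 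For the converse, given $g\in\aut(\nefbig(X))$ satisfying~\eqref{eq:etaeta}, pick $g_T\in\OG\sp{\omega}(T_X)$ with $\eta_{T_X}(g_T)=\sigma_{X*}(\eta_{\SX}(g))$; by~\cite{MR525944} the pair $(g,g_T)$ glues to an isometry $\tilde g$ of $H^2(X,\Z)$. Because $g_T$ preserves the line $\C\omega_X$ in $T_X\tensor\C$, the lift $\tilde g$ is a Hodge isometry, and because $g\in\aut(\nefbig(X))$ it carries ample classes to ample classes. The Torelli theorem then yields $f\in\Aut(X)$ with $f\sp{*}=\tilde g$, so $\varphi_X(f)=g$.

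Part~(2) is the same gluing argument applied with $g=\id_{\SX}$. If $f\in\ker(\varphi_X)$, then $f\sp{*}|_{\SX}=\id$, so the compatibility forces $\eta_{T_X}(f\sp{*}|_{T_X})=\sigma_{X*}(1)=1$, exhibiting $f\sp{*}|_{T_X}$ as an element of the displayed group. Conversely, any $g_T$ in that group glues with $\id_{\SX}$ to a Hodge isometry of $H^2(X,\Z)$ that fixes every ample class, and the Torelli theorem produces a unique $f\in\Aut(X)$ realizing it. Distinct choices of $g_T$ give distinct $f$ because $f$ is determined by $f\sp{*}$, yielding the asserted isomorphism of $\ker(\varphi_X)$ with $\set{g\in \OG\sp{\omega}(T_X)}{\eta_{T_X}(g)=1}$.

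The main subtlety, rather than a genuine obstacle, is keeping the sign conventions in the definition of $\sigma_X\colon q_{\SX}\isom -q_{T_X}$ straight when transporting the gluing condition between the two summands, so that the compatibility on discriminant forms is correctly identified with~\eqref{eq:etaeta}. Once this bookkeeping is in place, both statements are straightforward combinations of~\cite{MR0284440} and~\cite{MR525944}, and require no further input specific to $X$.
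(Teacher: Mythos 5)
Your proposal is correct and is exactly the standard argument the paper is invoking: the paper states Theorem~\ref{thm:aut} as an immediate consequence of the Torelli theorem~\cite{MR0284440} and Nikulin's gluing theory~\cite{MR525944} without writing out the details, and your write-up supplies precisely those details (gluing $g$ with a compatible $g_T\in\OG\sp{\omega}(T_X)$ via the graph of $\sigma_X$, checking the Hodge and ampleness conditions, and applying strong Torelli). No gaps; the only cosmetic omission is the one-line remark that $f\mapsto f\sp{*}|_{T_X}$ is a group homomorphism in part~(2), which is immediate from the conventions fixed in Section~\ref{subsec:lattices}.
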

Theorem~\ref{thm:aut} enables us to calculate $\Aut(X)$
from $\aut(\nefbig(X))$.
In~\cite{MR1618132},
Kondo studied the automorphism group of a generic Jacobian Kummer surface $X$
by finding a primitive embedding $S_X\inj L_{26}$ of simple Borcherds type and calculating $\aut(\nefbig(X))$.
Since then,
many authors have studied the automorphism groups of $K3$ surfaces by this method.
See~\cite{MR3456710} and the references therein.
On the other hand,
in~\cite{MR3286672} and~\cite{MR3456710}, this method was generalized to primitive embeddings $S_X\inj L_{26}$
that are \emph{not} of simple Borcherds type.
  \section{Computational  study of geometry of an Enriques surface}\label{sec:Enriques}
\subsection{Borcherds method for an Enriques surface}\label{subsec:appEnriques}
Suppose that a  $K3$ surface $X$ has an Enriques involution
$\enrinvol\in \Aut(X)$,
and let $g_{\enrinvol}:=\varphi(\enrinvol)\in \aut(X)$ denote its action on $S_X$.
Let $Y:=X/\gen{\enrinvol}$ be the quotient of $X$ by $\enrinvol$,
and let
$\pi\colon X\to Y$
denote the quotient morphism.
Let $S_Y$ denote the lattice of numerical equivalence classes of divisors on $Y$.
It is well-known that $S_Y$ is isomorphic to the even unimodular hyperbolic  lattice $\Lten$ of rank $10$.
Let $\PPP_Y$ be the positive cone of $S_Y$ containing an ample class.
We put
$$
\nefbig(Y):=\set{y\in \PPP_Y}{\intf{y, [C]}\ge 0\;\;\textrm{for all curves $C$ on $Y$}}.
$$
Let $\aut(Y)$ denote
the image of the natural representation
$$
\varphi_Y \dcolon  \Aut(Y)\to \OG^+(S_Y)
$$
defined by the \emph{pullback}  of the classes of divisors.
We  have
$\aut(Y)\subset \aut(\nefbig(Y))$.
We consider the following two primitive sublattices of $S_X$:
\begin{equation}\label{eq:SXpm}
\SX^+:=\shortset{v\in S_X}{v^{g_{\enrinvol}}=v},
\qquad
\SX^-:=\shortset{v\in S_X}{v^{g_{\enrinvol}}=-v}.
\end{equation}
%
The homomorphism $\pi^*\colon S_Y\to S_X$
induces an isomorphism of lattices
$$
\pi^* \dcolon  S_Y(2) \isom S_X^+,
$$
by which
we regard $S_Y$ as a $\Z$-submodule of $S_X$.
In particular, 
we have
$$
\PPP_Y=(S_Y\tensor\R)\cap \PPP_X,
\quad \nefbig(Y)=\PPP_Y\cap \nefbig(X).
$$
%
(The second quality follows from the projection formula and the fact that $\pi$ is finite.)
We use $\intfvoid_X$ and $\intfvoid_Y$ to denote the intersection forms of $S_X$ and $S_Y$,
respectively, so that, for $y, y\sprime\in S_Y\tensor \R$,  we have
$\intfX{y, y\sprime}=2\, \intfY{y, y\sprime}$.
\par
For a group $G$ and an element $g\in G$,
we denote by $Z_G(g)$ the centralizer of $g$ in $G$.
We have a natural isomorphism
\begin{equation*}\label{eq:ZAut}
\Aut(Y)\cong Z_{\Aut(X)}(\enrinvol)/\gen{\enrinvol}.
\end{equation*}
Hence we have the following:
\begin{proposition}\label{prop:Zaut}
Suppose that the representation  $\varphi_X\colon \Aut(X)\to \aut(X)$ is an isomorphism.
Then we have a  surjective homomorphism
\begin{equation*}\label{eq:Zaut}
\zeta \dcolon  Z_{\aut(X)}(g_\enrinvol)/\gen{g_\enrinvol} \surj \aut(Y)
\end{equation*}
defined by the commutative diagram
$$
\renewcommand{\arraystretch}{1.5}
\begin{array}{ccc}
Z_{\Aut(X)}(\enrinvol)/\gen{\enrinvol} & \cong &Z_{\aut(X)}(g_\enrinvol)/\gen{g_\enrinvol}\\
\wr\downarrow& & \mapdownright{\zeta} \\
\Aut(Y) &\displaystyle{\mathop{\longrightarrow \hskip -7pt \to}_{{\varphi_Y} }}& \aut(Y)\rlap{.}
\end{array}
$$
The  homomorphism  $\zeta$  is defined as follows.
If $g\in Z_{\aut(X)}(g_\enrinvol)$, then  $\SY^g=\SY$ holds, and the restriction
$g|_{\SY}\in \OG(\SY)$ of $g$ to $\SY$ gives $\zeta(g)\in \aut(Y)$.
\end{proposition}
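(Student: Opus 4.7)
The plan is a three-step verification: (i) construct the top-row isomorphism and define $\zeta$ by the diagram, (ii) show that any $g\in Z_{\aut(X)}(g_\enrinvol)$ preserves $\SY$ and restricts to an element of $\OG^+(\SY)$, and (iii) identify this restriction with $\zeta(g)$. Step (i) is essentially bookkeeping: every $\bar h\in\Aut(Y)$ lifts, uniquely up to composition with $\enrinvol$, to an automorphism $h$ of $X$ commuting with $\enrinvol$, yielding the left vertical isomorphism $\Aut(Y)\cong Z_{\Aut(X)}(\enrinvol)/\gen{\enrinvol}$. Since a group isomorphism always carries a centralizer onto the centralizer of the image, the hypothesis that $\varphi_X$ is an isomorphism gives the top isomorphism $Z_{\Aut(X)}(\enrinvol)/\gen{\enrinvol}\cong Z_{\aut(X)}(g_\enrinvol)/\gen{g_\enrinvol}$. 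Composing its inverse with the left vertical iso and then with $\varphi_Y$ defines $\zeta$; the surjectivity of $\varphi_Y$ makes $\zeta$ surjective.

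For step (ii), any $g$ that commutes with $g_\enrinvol$ must preserve each eigenspace of $g_\enrinvol$ on $\SX\tensor\Q$, in particular the integral $(+1)$-eigenlattice $\SX^+$ defined in~\eqref{eq:SXpm}. Since $\pi^*\colon \SY(2)\isom \SX^+$ identifies $\SY$ with $\SX^+$ as $\Z$-modules, we conclude $\SY^g=\SY$ and that $g|_{\SY}$ is a well-defined isometry of $\SY$. Because $g\in\OG^+(\SX)$ preserves $\PPP_X$, it also preserves $\PPP_Y=\PPP_X\cap (\SY\tensor\R)$, so $g|_{\SY}\in\OG^+(\SY)$.

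For step (iii), let $\tilde g\in Z_{\Aut(X)}(\enrinvol)$ be the unique lift of $g$ under $\varphi_X^{-1}$, and let $\bar g\in \Aut(Y)$ be the descended automorphism characterized by $\pi\circ\tilde g=\bar g\circ\pi$. Then for every class $[D]\in\SY$ pullback gives $\tilde g^*\pi^*[D]=\pi^*\bar g^*[D]$, i.e.\ $g\cdot\pi^*[D]=\pi^*(\varphi_Y(\bar g)[D])$; hence on $\SX^+\cong\SY$ the action of $g$ coincides with that of $\varphi_Y(\bar g)=\zeta(g)$. This simultaneously identifies $g|_{\SY}$ with $\zeta(g)$ and confirms that the restriction lies in $\aut(Y)$. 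The whole argument is a diagram chase combined with the functoriality of pullback under a Galois quotient, and the only point requiring care is keeping track of the factor of $2$ implicit in $\pi^*\colon \SY(2)\isom \SX^+$; no step presents a serious obstacle.
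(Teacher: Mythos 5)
Your proposal is correct and follows exactly the route the paper takes: the paper derives the proposition immediately from the natural isomorphism $\Aut(Y)\cong Z_{\Aut(X)}(\enrinvol)/\gen{\enrinvol}$ together with the hypothesis that $\varphi_X$ is an isomorphism, and your three steps simply spell out the details (lifting to the universal cover, transport of centralizers under $\varphi_X$, and the compatibility $\tilde g^*\circ\pi^*=\pi^*\circ\bar g^*$ identifying $\zeta(g)$ with $g|_{\SY}$). Nothing essential is missing.
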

\par
We denote the orthogonal projection to $S_Y\tensor\R=S_X^+\tensor\R$ by
$$
\pr^+ \dcolon  S_X\tensor\R\to S_Y\tensor\R.
$$
Suppose that we have a primitive embedding $i \colon S_X\inj L_{26}$,
and hence $\PPP_X$ is tessellated by induced chambers.
The composite of the primitive embeddings $\pi^*\colon S_Y(2)\inj S_X$ and  $i\colon S_X\inj L_{26}$
gives a primitive embedding of $S_Y(2)$ into $L_{26}$.
By this embedding,
the notion of induced chambers in $\PPP_Y$ is defined.
Recall that $\pr_S\colon L_{26}\tensor\R\to \SX\tensor \R$
is the orthogonal projection.  
Let
$$
\pr_S^+ \dcolon  L_{26}\tensor\R\to \SY\tensor \R
$$
denote the composite of $\pr_S$ and $\pr^+$.
Then the tessellation of $\PPP_Y$ by induced chambers is
 given by the locally finite family of hyperplanes
\begin{equation}\label{eq:prSplushamily}
\set{(\pr_S^+(r))\sperp}{\textrm{$r$ is a root of $L_{26}$ such that $\intfY{\pr_S^+(r), \pr_S^+(r)}<0$}}.
\end{equation}
This tessellation is the restriction to $\PPP_Y$  of the tessellation of $\PPP_X$
by induced chambers.
Since $N(X)$ is
a standard fundamental domain of the action of $W(\SX)$ on $\PPP_X$,
Proposition~\ref{prop:stdfundunion}
and $N(Y)=\PPP(Y)\cap N(X)$
imply the following:
\begin{proposition}
The chamber $N(Y)$ is tessellated by induced chambers. 
\end{proposition}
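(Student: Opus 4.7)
The plan is to deduce this from Proposition~\ref{prop:stdfundunion} combined with the observation, established in the paragraph immediately preceding this proposition, that the tessellation of $\PPP_Y$ by induced chambers (coming from the composite primitive embedding $\pi^*\colon \SY(2)\inj \SX$ followed by $i\colon \SX\inj L_{26}$) is precisely the restriction to $\PPP_Y$ of the tessellation of $\PPP_X$ by induced chambers. The starting point is the identity $N(Y)=\PPP_Y\cap N(X)$ already noted in the excerpt.

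By Proposition~\ref{prop:stdfundunion} applied to $S=\SX$ and the embedding $i$, the chamber $N(X)$ is tessellated by induced chambers $\{D_k\}$ in $\PPP_X$. Intersecting this decomposition with $\PPP_Y$ gives
$$
N(Y)\;=\;\bigcup_{k} \bigl(D_k\cap \PPP_Y\bigr).
$$
For each index $k$ there are two cases. Either $D_k\cap \PPP_Y$ has empty interior in $\PPP_Y$, in which case all its points lie on hyperplanes from the family~\eqref{eq:prSplushamily} and the piece can be absorbed into the boundaries of neighbouring ones; or $D_k\cap \PPP_Y$ contains a non-empty open subset of $\PPP_Y$, so that the underlying Conway chamber in $\PPP_{26}$ is non-degenerate with respect to $\SY(2)$, and $D_k\cap \PPP_Y$ is then itself an induced chamber of $\PPP_Y$. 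Retaining only the non-degenerate pieces exhibits $N(Y)$ as a union of induced chambers of $\PPP_Y$ with pairwise disjoint interiors, and the local finiteness of~\eqref{eq:prSplushamily} in $\PPP_Y$ ensures the decomposition is locally finite.

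The only substantive point is the compatibility between the two ways of defining induced chambers in $\PPP_Y$: directly via the primitive embedding $\SY(2)\inj L_{26}$ on the one hand, and by intersecting induced chambers of $\PPP_X$ with $\PPP_Y$ on the other. This compatibility was asserted in the preceding paragraph and follows from the factorisation $\pr_S^+=\pr^+\circ \pr_S$ of the orthogonal projection; once it is invoked, the proposition reduces to the bookkeeping above and no serious obstacle remains.
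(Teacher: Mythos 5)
Your proposal is correct and follows essentially the same route as the paper, which derives the statement in one line from Proposition~\ref{prop:stdfundunion} applied to $N(X)$, the identity $N(Y)=\PPP_Y\cap N(X)$, and the fact (stated just before the proposition) that the induced-chamber tessellation of $\PPP_Y$ is the restriction of that of $\PPP_X$. You merely spell out the bookkeeping about discarding the intersections $D_k\cap\PPP_Y$ with empty interior, which the paper leaves implicit.
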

\par
Note that a chamber $\DY$ in $\PPP_Y$ is an induced chamber
if and only if there exists an induced chamber $\DX$
in $\PPP_X$ such that $\DY=\PPP_Y\cap \DX$.
More precisely, suppose that a subset $\FFF(\DX)$ of $S_X\dual\setminus\{0\}$ defines an induced chamber $\DX $ in $\PPP_X$;
$$
\DX =\set{x\in \PPP_X}{\intfX{v, x}\ge 0\;\;\textrm{for all $v\in \FFF(\DX)$}\;}.
$$
Then we have
$$
\DX \cap \PPP_Y =\set{y\in \PPP_Y}{\intfY{\pr^+(v), y}\ge 0\;\;\textrm{for all $v\in \FFF(\DX)$}\;}.
$$
Hence,
if  $\DX \cap \PPP_Y$
contains  a non-empty open subset of  $\PPP_Y$,
then $\DX \cap \PPP_Y$ is an induced  chamber
in $\PPP_Y$ defined by $\pr^+(\FFF(\DX))\setminus \{0\}$.
\subsection{Smooth rational curves on an Enriques surface}\label{subsec:ratcurvesY}
Let $X$ and $Y$ be as in the previous subsection.
In particular, $\SY$ is regarded as a $\Z$-submodule of $\SX$.
Let $\hY\in \SY$ be an ample class of $Y$. 
Then $\hY\in\SX$ is ample on $X$.
We explain a method to calculate the finite subset
$$
\RRR_d:=\set{[C]\in \SY}{\hbox{$C$ is a smooth rational curve on $Y$ such that $\intfY{[C], \hY}= d$}}
$$
of $\SY$ for each positive integer $d$
by induction on $d$ starting from $\RRR_0=\emptyset$.
Since the sublattice $\SX^-$ of $\SX$  is negative-definite,
the set
$$
\TTT:=\set{t\in \SX^-}{\intfX{t, t}=-4}
$$
is  finite and can be calculated.
We have the following:
\begin{lemma}[Nikulin~\cite{NikulinEnriques}]\label{lem:VVV}
Let $v\in \SY$ be a vector such that $\intfY{v, v}=-2$ and $\intfY{v, \hY}>0$.
Then the following conditions are equivalent.
\begin{enumerate}[{\rm (i)}]
\item The vector $v$ is the class of an effective divisor $D$ on $Y$ such that
$\pi^*D$
is written as $[\Delta]+[\Delta]^\enrinvol$,
where $\Delta$ is an effective divisor on $X$
satisfying $\intfX{[\Delta], [\Delta]^\enrinvol}=0$.
%
\item There exists an element $t\in \TTT$ satisfying $(v+t)/2\in \SX$.
\qed
%
%
\end{enumerate}
\end{lemma}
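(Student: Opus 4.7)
\medskip

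\noindent\textbf{Proof proposal for Lemma~\ref{lem:VVV}.}
The plan is to realize the correspondence explicitly via the $\pm 1$-eigenspace decomposition for $\enrinvol$ and to pass between effective divisors on $X$ and on $Y$ using the fact that the Enriques involution is fixed-point-free, so that $\pi$ is an \'etale double cover with $\pi^*\pi_* = 1 + \enrinvol^*$ on divisors.

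First I would fix notation. Given any class $a\in\SX_{\Q}$, decompose $a=a^+ + a^-$ with $a^{\pm}:=(a\pm a^{\enrinvol})/2\in (\SX^{\pm})_{\Q}$; these pieces are orthogonal with respect to $\intfvoid_X$ since $\SX^{+}$ and $\SX^{-}$ are the eigenspaces of the involution $g_\enrinvol\in \OG(\SX)$. Under the identification $\pi^{*}\colon \SY(2)\isom \SX^{+}$, the vector $v\in \SY$ viewed in $\SX$ lies in $\SX^{+}$ and satisfies $\intfX{v,v}=2\,\intfY{v,v}=-4$ and $\intfX{v,\hY}=2\,\intfY{v,\hY}>0$.

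For $\text{(i)}\Rightarrow\text{(ii)}$, take $a:=[\Delta]\in \SX$. Then $a+a^{\enrinvol}=\pi^{*}[D]=v$, so $a^{+}=v/2$, and the hypothesis $\intfX{a,a^{\enrinvol}}=0$ together with orthogonality of $a^{+}$ and $a^{-}$ gives $\intfX{a^{-},a^{-}}=\intfX{a^{+},a^{+}}=\intfX{v,v}/4=-1$. Set $t:=a-a^{\enrinvol}=2a^{-}$. Then $t\in \SX$ (as a $\Z$-linear combination of $a,a^{\enrinvol}\in\SX$), $t^{\enrinvol}=-t$ so $t\in\SX^{-}$, and $\intfX{t,t}=4\intfX{a^{-},a^{-}}=-4$, so $t\in\TTT$; moreover $(v+t)/2=v/2+a^{-}=a^{+}+a^{-}=a\in \SX$.

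For $\text{(ii)}\Rightarrow\text{(i)}$, set $a:=(v+t)/2$; by hypothesis $a\in\SX$. Since $v\in \SX^{+}$ and $t\in \SX^{-}$ are orthogonal under $\intfvoid_X$, we compute $\intfX{a,a}=\intfX{v,v}/4+\intfX{t,t}/4=-1-1=-2$, and $\intfX{a,\hY}=\intfX{v,\hY}/2>0$ because $\hY\in \SX^{+}\perp t$. By Riemann--Roch on the $K3$ surface $X$, a $(-2)$-class meeting an ample class positively is the class of a unique effective divisor, so $a=[\Delta]$ for some effective $\Delta$ on $X$. Note that $a^{\enrinvol}=(v-t)/2$, hence $a+a^{\enrinvol}=v$ and $\intfX{a,a^{\enrinvol}}=\intfX{v,v}/4-\intfX{t,t}/4=-1+1=0$. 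Finally, put $D:=\pi_{*}\Delta$, which is effective on $Y$; since $\enrinvol$ acts freely, $\pi^{*}D=\Delta+\Delta^{\enrinvol}$, and $\pi^{*}[D]=a+a^{\enrinvol}=v=\pi^{*}v$ forces $[D]=v$ by the injectivity of $\pi^{*}\colon \SY\inj \SX$.

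The only nontrivial input, beyond bookkeeping with the eigenspace decomposition, is the Riemann--Roch step that produces the effective representative $\Delta$ from the $(-2)$-class $a$; the orthogonality condition $\intfX{a,a^{\enrinvol}}=0$ in the $\Leftarrow$ direction is exactly what forces $\intfX{t,t}=-4$, which is why the set $\TTT$ (and not some other set of $(-2k)$-vectors of $\SX^{-}$) is the correct object to parametrize the candidates.
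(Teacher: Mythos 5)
Your proof is correct and follows essentially the same route as the paper's: decompose via the $\pm1$-eigenspaces of $\enrinvol$, set $t=[\Delta]-[\Delta]^{\enrinvol}$ in one direction, and in the other use the norm computation together with ampleness of $\hY$ (Riemann--Roch) to produce the effective divisor $\Delta$ with class $(v+t)/2$, then descend via $\pi_*$. The only nitpick is the word ``unique'' in the Riemann--Roch step --- only existence of an effective representative is needed or guaranteed --- but this does not affect the argument.
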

%
%
%
By the algorithm in Section 3 of~\cite{MR3166075},
we compute the finite set
$$
\VVV_d:=\set{v\in \SY}{\intfY{v, v}=-2, \; \intfY{v, \hY}=d}.
$$
We then compute
$$
\VVV\sprime_d:=\set{v\in \VVV_d}{\hbox{$v$ satisfies condition (ii) in Lemma~\ref{lem:VVV}}}.
$$
Lemma~\ref{lem:VVV} implies that $\RRR_d\subset \VVV\sprime_d$.
\begin{lemma}
A vector $v$ in $\VVV\sprime_d$ fails to belong to $\RRR_d$
if and only if there exists a vector $r\in \RRR_{d\sprime}$
with $0<d\sprime<d$ such that $\intfY{v, r}<0$.
\end{lemma}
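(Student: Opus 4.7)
The plan is to handle the two directions of the equivalence separately. By Lemma~\ref{lem:VVV}, any $v\in\VVV\sprime_d$ can be written as $[D]$ for some effective divisor $D$ on $Y$, and the strategy is to analyze the decomposition of $D$ into its irreducible components. For the backward direction, I would argue by contradiction: if $v=[C]\in\RRR_d$ for a smooth rational curve $C$ and $r=[C\sprime]\in\RRR_{d\sprime}$ with $0<d\sprime<d$, then $C$ and $C\sprime$ are distinct (their $\hY$-degrees differ), so as distinct irreducible curves on the smooth surface $Y$ their intersection is non-negative, contradicting $\intfY{v, r}<0$.

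For the forward direction, let $v\in\VVV\sprime_d\setminus\RRR_d$ and write $v=[D]$ with $D=\sum n_i C_i$ effective, $n_i>0$, and the $C_i$ distinct irreducible components. First I would rule out the case that $D$ is supported on a single irreducible curve: if $D=n_1 C_1$, then $-2=\intfY{v,v}=n_1^2\,\intfY{[C_1],[C_1]}$ combined with the evenness of $\SY$ forces $n_1=1$ and $\intfY{[C_1],[C_1]}=-2$. Since the canonical class of $Y$ is numerically trivial, adjunction then gives $p_a(C_1)=0$, hence $C_1$ is a smooth rational curve and $v=[C_1]\in\RRR_d$, a contradiction. Thus $D$ has at least two distinct irreducible components.

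Summing $-2=\intfY{v,v}=\sum_i n_i\,\intfY{v,[C_i]}$ then shows that some index $i$ satisfies $\intfY{v,[C_i]}<0$. The decomposition
\[
\intfY{v,[C_i]}=n_i\,\intfY{[C_i],[C_i]}+\intfY{[D-n_i C_i],[C_i]},
\]
together with the fact that $D-n_i C_i$ is effective and does not contain $C_i$ as a component (so the second term is non-negative), forces $\intfY{[C_i],[C_i]}<0$; adjunction then pins this value at exactly $-2$ and shows $C_i$ is a smooth rational curve. Since $\hY$ is ample and $D$ has at least one irreducible component distinct from $C_i$, we obtain $0<\intfY{[C_i],\hY}<d$, so $r:=[C_i]$ is the desired element of $\RRR_{d\sprime}$ with $0<d\sprime<d$. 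The main subtleties are pinning $\intfY{[C_i],[C_i]}$ exactly to $-2$ rather than allowing it to be more negative (handled by irreducibility of $C_i$ and adjunction), and verifying the strict inequality $d\sprime<d$ (handled by ampleness of $\hY$ applied to a second irreducible component of $D$).
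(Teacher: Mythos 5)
Your proof is correct and follows essentially the same route as the paper: represent $v$ by an effective divisor via Lemma~\ref{lem:VVV}, extract an irreducible component $\Gamma$ with $\intfY{[D],[\Gamma]}<0$, and show it is a smooth rational curve of strictly smaller degree. Your treatment is in fact slightly more careful than the paper's at two points (ruling out $D=n_1C_1$ with $n_1>1$ via evenness of the lattice, and proving the reverse implication directly from non-negativity of the intersection of two distinct irreducible curves), but these are minor refinements of the same argument.
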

\begin{proof}
Lemma~\ref{lem:VVV} implies that
there exists an effective divisor $D$ on $Y$
such that $v=[D]$.
Then $v\in \RRR_d$ if and only if $D$ is irreducible.
\par
Suppose that there exists a smooth rational curve $C$ on $Y$
such that $\intfY{v, [C]}<0$.
Then $D$ contains $C$.
In particular,
if $\intfY{[C], \hY}<\intfY{v, \hY}=d$,
then $D$ is not irreducible and hence $v\notin\RRR_d$.
Conversely, suppose that $D$ is reducible.
Let $\Gamma_1, \dots, \Gamma_N$ be the distinct reduced  irreducible components of  $D$.
If $\intfY{[D], [\Gamma_i]}\ge 0$ for all $i$,
we would have $\intfY{v, v}\ge 0$.
Therefore there exists an irreducible component $\Gamma_i$
such that $\intfY{[D], [\Gamma_i]}< 0$.
Then we have $\intfY{[\Gamma_i], [\Gamma_i]}<0$ and
hence $\Gamma_i$ is a smooth rational curve.
Since $D$ is reducible, we see that  $d\sprime:= \intfY{ [\Gamma_i], \hY}$
is smaller than $d$, and hence
 $r:=[\Gamma_i]\in \RRR_{d\sprime}$ satisfies $\intfY{v, r}<0$.
\end{proof}
%
%
%
\subsection{An elliptic fibration  on an Enriques surface}\label{subsec:ADEellfibY}
Let $Y$ be an Enriques surface with an ample class $\hY\in \SY$.
Let $\phi\colon Y\to\P^1$
be an elliptic fibration.
Then the class of a fiber of $\phi$ is written as $2 f_{\phi}$,
where $f_{\phi}$ is primitive in $\SY$.
For $p\in \P^1$, let $E_p$ denote the divisor on $Y$ such that
$$
\phi\inv(p)=\begin{cases}
E_p & \textrm{if $\phi\inv(p)$ is not a multiple fiber}, \\
2 E_p & \textrm{if $\phi\inv(p)$ is a multiple fiber}.
\end{cases}
$$
We give a method to calculate the reducible fibers of $\phi$.
Let $d_{\phi}:=\intfY{2f_{\phi}, \hY}$
be the degree of a fiber of $\phi$ with respect to $\hY$.
Then we obviously have
\begin{eqnarray*}
\RRR(\phi)&:=&\shortset{[C]\in \SY}{\hbox{$C$ is a smooth rational curve on $Y$ contained in a fiber of $\phi$}}\\
&=&\shortset{[C]\in \SY}{[C]\in \RRR_d\;\;\textrm{for some}\; d<d_{\phi}, \;\;\textrm{and}\; \;  \intfY{\,[C], f_{\phi}\,}=0}.
\end{eqnarray*}
We calculate the dual graph of the roots in $\RRR(\phi)$,
and decompose $\RRR(\phi)$
into equivalence classes according to
the connected components of the graph.
Then there exists a canonical bijection between the set
of these equivalence classes
and the set of reducible fibers of $\phi$.
Let $\varGamma_p\subset \RRR(\phi)$
be one of the equivalence classes,
and suppose that $\varGamma_p$
corresponds to a reducible fiber $\phi\inv(p)$.
The roots  in  $\varGamma_p$
form an indecomposable extended Dynkin diagram,
and its  $ADE$-type is the $ADE$-type of  the divisor $E_p$.
We calculate
$$
t(\varGamma_p):=\sum_{r\in \varGamma_p} \, m_r \cdot \intfY{r, \hY},
$$
where $m_r$ is
 the multiplicity in $E_p$ of the
 irreducible component  corresponding to $r\in \varGamma_p$
 (see Figure 1.8 of~\cite{MR2977354}).
We have
 $t(\varGamma_p)=\intfY{[E_p], \hY}$, and hence
 $t(\varGamma_p)$  is either $d_{\phi}$ or $d_{\phi}/2$.
Then $\phi\inv (p)$ is a multiple fiber
if and only if $t(\varGamma_p)= d_{\phi}/2$.
%
%
%
%
\section{General quartic Hessian surface}\label{sec:QH}
We review the results on   the general quartic Hessian surface
obtained in Dolgachev and Keum~\cite{MR1897389},
and re-calculate these results in the form of vectors and matrices.
\emph{From now on,
we denote by  $X$ the minimal resolution of the general quartic Hessian surface $\barX$
defined  in Introduction}.
\par
It is known that the N\'eron-Severi lattice $\SX$ of $X$ is of rank $16$,
and $\SX\dual/\SX$ is isomorphic to $(\Z/2\Z)^4\times (\Z/3\Z)$.
If two lines $\ell_{\beta}$ and $\ell_{\beta\sprime}$ on $\barX$ intersect,
the intersection point  is an ordinary node of $\barX$.
Hence we have
\begin{equation}
\intf{[E_{\alpha}], [E_{\alpha\sprime}]}=(-2) \cdot\delta_{\alpha \alpha\sprime},
\qquad
\intf{[L_{\beta}], [L_{\beta\sprime}]}=(-2) \cdot\delta_{\beta \beta\sprime},
\label{eq:EELL}
\end{equation}
where $\delta$ is Kronecker's delta symbol on $A\cup B$.
Recall that the indexing of $p_{\alpha}$ and $\ell_{\beta}$ was done in such a way that the following holds:
\begin{equation}
\intf{[E_{\alpha}], [L_{\beta}]}=
\begin{cases}
0 & \textrm{$\alpha\not\supset \beta$,}\\
1 & \textrm{$\alpha\supset \beta$.}
\end{cases}
\label{eq:EL}
\end{equation}
The lattice $\SX$ is generated by the classes $[E_{\alpha}]$ and $[L_{\beta}]$.
More precisely, the classes of the  following smooth rational curves form a basis of $\SX$:
\begin{eqnarray}\label{eq:basisEL}
&&
E_{123},
E_{124},
E_{125},
E_{134},
E_{135},
E_{145},
E_{234},
E_{235},
E_{245},
E_{345},\\
&&
L_{45},
L_{35},
L_{34},
L_{25},
L_{24},
L_{13}.\nonumber
\end{eqnarray}
We fix, once and for all, this basis,
and write elements of $S_X\tensor \R$ as \emph{row} vectors.
The Gram matrix of $\SX$
with respect to this basis is
readily calculated by~\eqref{eq:EELL}~and~\eqref{eq:EL}.
The fact that the classes of the $16$ curves above form a basis of $\SX$
can be confirmed by checking that the determinant of this Gram matrix is equal to $-2^4\cdot 3$.
%
%
%
Let $h_Q\in \SX$ denote the class of the pullback of a hyperplane section of $\barX\subset \P^3$
by the minimal resolution $X\to \barX$.
Then we have
$$
h_Q=[ -1, 1, 1, -1, -1, 1, 1, 1, 3, 1, 2, 0, 0, 2, 2, -2 ].
$$
%
\begin{proposition}\label{prop:period}
{\rm (1)}
An element $g$ of $\aut(\nefbig(X))$ belongs to $\aut(X)$ if and only if $g$ satisfies
$\eta_{S_X} (g)=\pm 1$, where $\eta_{S_X}\colon \OG(\SX)\to\OG(q_{\SX})$
is the natural homomorphism.
{\rm (2)}
The representation $\varphi_X\colon \Aut(X)\to \aut(X)$ is an isomorphism.
\end{proposition}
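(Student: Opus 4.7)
The plan is to deduce both claims from Theorem~\ref{thm:aut} by identifying the image $\eta_{T_X}(\OG^{\omega}(T_X))\subset \OG(q_{T_X})$ explicitly. The key observation is that for a \emph{general} Hessian $\barX$ the transcendental lattice $T_X$, which has signature $(2,4)$ and discriminant group isomorphic to $(\Z/2\Z)^4\times\Z/3\Z$, should admit only the trivial Hodge isometries, that is,
\[
\OG^{\omega}(T_X)=\{\pm 1\}.
\]
Once this is established, the proof becomes essentially formal.

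First I would justify $\OG^{\omega}(T_X)=\{\pm 1\}$ by a standard genericity argument. Any $g\in \OG^{\omega}(T_X)$ acts on the one-dimensional space $H^{2,0}(X)$ by a scalar $\lambda(g)$. Since $\OG^{\omega}(T_X)$ is a discrete group of isometries preserving $\omega_X$ up to scalar, $\lambda$ takes values in a finite cyclic subgroup of $\C^{\times}$. For the period of $X$ outside a countable union of proper analytic subsets of the $4$-dimensional period domain associated to $T_X$, the only possible eigenvalues are $\pm 1$; interpreting ``general'' in this Hodge-theoretic sense (and noting that the moduli of cubic surfaces is also $4$-dimensional, so the period map is non-degenerate along the Hessian family), we obtain $\OG^{\omega}(T_X)=\{\pm 1\}$. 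Since $-\id$ acts on $T_X^{\vee}/T_X$ as $-1$, we get $\eta_{T_X}(\OG^{\omega}(T_X))=\{\pm 1\}\subset \OG(q_{T_X})$.

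Next, for part (1), Theorem~\ref{thm:aut}(1) says $g\in\aut(\nefbig(X))$ lies in $\aut(X)$ iff $\sigma_{X*}(\eta_{S_X}(g))\in \eta_{T_X}(\OG^{\omega}(T_X))$. The isomorphism $\sigma_{X*}\colon \OG(q_{S_X})\isom \OG(q_{T_X})$ sends $\pm\id$ to $\pm\id$ (these are central and characterised as the scalar involutions of the discriminant form). Therefore the condition becomes $\eta_{S_X}(g)=\pm 1$, which is exactly the assertion in (1). For part (2), Theorem~\ref{thm:aut}(2) identifies the kernel of $\varphi_X$ with $\{g\in\OG^{\omega}(T_X)\mid \eta_{T_X}(g)=1\}$; since $\OG^{\omega}(T_X)=\{\pm 1\}$ and $\eta_{T_X}(-\id)=-\id\ne\id$ (as the discriminant group is non-trivial), the kernel is trivial, so $\varphi_X$ is injective and hence an isomorphism onto its image $\aut(X)$ by definition of the latter.

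The main obstacle is making the genericity argument rigorous: one must verify that the Hessian family is not contained in any Noether--Lefschetz-type locus on which $\OG^{\omega}(T_X)$ is strictly larger than $\{\pm 1\}$. In practice this can be checked by exhibiting a single $\barX$ (for instance by a computation with an explicit cubic $F$ over a number field) whose transcendental lattice has no Hodge isometry other than $\pm 1$; any such example, combined with the upper-semicontinuity of $|\OG^{\omega}(T_X)|$ on the complement of the Noether--Lefschetz locus, forces the same conclusion on a Zariski dense open subset of the parameter space, which is what ``general'' means here. Alternatively, this statement is already implicit in the work of Dolgachev--Keum~\cite{MR1897389}, where an explicit description of $\Aut(X)$ is obtained; citing their result reduces (1) and (2) to matching two explicit finite groups, completing the proof.
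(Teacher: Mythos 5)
Your proof is correct and takes essentially the same route as the paper: the paper's entire argument consists of the observation that $\OG^{\omega}(T_X)=\{\pm 1\}$ because $\barX$ is general, followed by an appeal to Theorem~\ref{thm:aut}, and you are simply spelling out the genericity argument and the bookkeeping that the paper leaves implicit. One small point: in part (2), the reason $\eta_{T_X}(-\id)\ne\id$ is not that the discriminant group is non-trivial but that it is not $2$-elementary (here it contains a $\Z/3\Z$ factor), though the conclusion is of course correct in this case.
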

\begin{proof}
Since $\barX$ is \emph{general} in the family of quartic Hessian surfaces,
we have
$$
\OG^\omega(T_X)=\{\pm 1\}.
$$
Therefore the statements  follow from Theorem~\ref{thm:aut}.
\end{proof}
\par
We then apply Borcherds method to $\SX$.
Recall that we have fixed a basis
$f_1, f_2, \lambda_1, \dots, \lambda_{24}$
of $L_{26}$ in Section~\ref{subsec:L26}.
Let $M$ be the matrix
given in Table~\ref{table:embSXL}.
\begin{table}
{
\tiny%
\renewcommand{\arraystretch}{2}
\vskip -.7cm
\centering
\rotatebox{90}{
\begin{minipage}{12.5cm}
$$
\arraycolsep=1.5pt
\left[\begin{array}{cccccccccccccccccccccccccc}
\;\;1\;\; & \;1\; & 0 & 0 & 0 & 0 & 0 & 0 & 0 & 0 & 0 & 0 & 0 & 0 & 0 & 1 & 0 & 0 & 0 & 0 & 0 & 0 & 0 & 0 & 0 & 0 \\
1 & 1 & -6 & 2 & 2 & 2 & 2 & 2 & 2 & -2 & 1 & 0 & 1 & -1 & 0 & 0 & -1 & 1 & 0 & 1 & -1 & 0 & -1 & 0 & 1 & 0 \\
1 & 1 & 6 & -2 & -2 & -2 & -1 & -2 & 0 & 1 & 0 & -1 & -1 & 1 & 0 & 1 & 0 & -1 & 0 & 0 & 1 & -1 & 1 & -1 & -1 & 2 \\
1 & 1 & 0 & 0 & 0 & 0 & 0 & 0 & -2 & 2 & 1 & 1 & 0 & -1 & 0 & -1 & 1 & 0 & 0 & -1 & 0 & 1 & -1 & 1 & 0 & 0 \\
1 & 1 & 0 & 0 & 0 & 0 & 1 & 0 & 0 & 0 & 0 & -1 & 0 & 1 & 0 & 0 & 0 & 0 & 0 & 0 & 0 & 0 & -1 & 1 & 0 & 0 \\
1 & 1 & -1 & 0 & 0 & 0 & 1 & 0 & 0 & 1 & 1 & 1 & 0 & 0 & 0 & -1 & 0 & 0 & 0 & -1 & 0 & 0 & 0 & 1 & 0 & 0 \\
1 & 1 & 5 & -2 & -2 & -2 & 0 & 0 & 0 & 0 & 0 & -1 & -1 & 2 & 0 & 0 & 0 & -1 & 0 & 0 & 0 & -1 & 2 & -1 & -1 & 2 \\
1 & 1 & -3 & 0 & 2 & 2 & 1 & 0 & 0 & 0 & 1 & 0 & 1 & -1 & 0 & 1 & -1 & 0 & 0 & 0 & -1 & 1 & -1 & 0 & 1 & 0 \\
1 & 1 & 0 & 0 & 0 & 0 & 0 & 0 & 0 & 0 & 0 & 0 & 0 & 0 & 0 & 0 & 1 & 0 & 0 & 0 & 0 & 0 & 0 & 0 & 0 & 0 \\
1 & 1 & -3 & 2 & 2 & 0 & 1 & 0 & 0 & 0 & 1 & 1 & 0 & -1 & 0 & -1 & 0 & 1 & 0 & -1 & 1 & 0 & -1 & 1 & 0 & 0 \\
1 & 1 & 0 & -1 & 0 & 0 & 1 & 0 & 0 & 1 & 0 & -1 & 0 & 1 & 0 & 1 & -1 & 0 & 1 & 0 & -1 & 0 & -1 & 0 & 1 & 0 \\
1 & 1 & 3 & -1 & -2 & -1 & 0 & 0 & 0 & 0 & 0 & 0 & -1 & 1 & 0 & 0 & 1 & -1 & 1 & 0 & 0 & -1 & 1 & -1 & -1 & 2 \\
1 & 1 & 0 & 0 & 0 & 1 & 0 & 0 & 1 & -1 & -1 & -1 & 0 & 0 & 0 & 1 & 0 & 0 & 0 & 1 & 0 & 0 & 0 & 0 & 0 & 0 \\
1 & 1 & -1 & 1 & 0 & 0 & 1 & 1 & 0 & 0 & 0 & 0 & 0 & 0 & 0 & -1 & 0 & 0 & 0 & 0 & 0 & 0 & 0 & 1 & 0 & 0 \\
1 & 1 & 0 & 0 & 1 & 0 & 0 & -1 & -1 & 1 & 0 & 1 & 0 & -1 & 0 & 0 & 0 & 0 & 0 & -1 & 1 & 1 & -1 & 1 & 0 & 0 \\
1 & 1 & -1 & 0 & 1 & 0 & 1 & 0 & 1 & 0 & 0 & 0 & 0 & 0 & 0 & 0 & -1 & 0 & 0 & 0 & 0 & 0 & 0 & 0 & 1 & 0
\end{array}\right]
$$
\end{minipage}
}
}
\vskip -.7cm
\caption{Primitive embedding  of $\SX$ into $L_{26}$}\label{table:embSXL}
\end{table}
It is easy see that  the homomorphism
$i\colon S_X\to L_{26}$ given by
$v\mapsto vM$ is a primitive embedding of  $S_X$ into
 $L_{26}$
that maps $\PPP_X$ into $\PPP_{26}$.
From now on,
we regard $\SX$ as a primitive sublattice of $L_{26}$ by this embedding $i$.
\begin{remark}
This embedding $i\colon S_X\to L_{26}$ is equal to the embedding
given by Dolgachev and Keum~\cite{MR1897389}
up to the action of $\OG^{+}(S_X)$ and $\OG^{+}(L_{26})$.
See~\cite{MR3456710}
for a general method to embed the N\'eron-Severi lattice of a $K3$ surface into $L_{26}$
in Borcherds method.
\end{remark}
As in Section~\ref{subsec:Borcherds},
we denote by
$\pr_S\colon L_{26}\tensor\R \to \SX\tensor\R$
the orthogonal projection.
We can calculate a Gram matrix of
the orthogonal complement $R$ of $S_X$ in $L_{26}$ explicitly,
and confirm that
$R$  contains a root.
Hence $R$ cannot be embedded into the negative-definite Leech lattice $\Lambda$.
Therefore Proposition~\ref{prop:R} can be applied.
\par
Let $D_{26}$ be the Conway chamber
with the Weyl vector $w_{26}=f_1\in L_{26}$
(see Section~\ref{subsec:L26}).
We put
$$
\DX :=i\inv (D_{26}).
$$
%
\begin{proposition}\label{prop:DX}
The closed subset   $\DX $  of $\PPP_X$ contains
the vector
$$
\hX:=\pr_S(w_{26})=[ -3, 2, 2, -3, -3, 2, 2, 2, 7, 2, 5, 0, 0, 5, 5, -5 ]
$$
of square-norm $20$ in its interior.
Moreover $\hX$ belongs to $\nefbig(X)$.
Hence $\DX $ is an induced chamber contained in $\nefbig(X)$.
\end{proposition}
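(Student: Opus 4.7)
The plan is a direct computation using the Gram matrix of $\SX$ (determined by~\eqref{eq:EELL} and~\eqref{eq:EL}) and the embedding matrix $M$ in Table~\ref{table:embSXL}. First I would verify the formula for $\hX = \pr_S(w_{26}) = \pr_S(f_1)$: orthogonal projection is characterized by $\intf{\pr_S(f_1), b}_{\SX} = \intf{f_1, i(b)}_{L_{26}}$ for every basis vector $b$ of $\SX$, and the right-hand sides are read off from the second column of $M$ (since $f_1$ is the first basis vector of $L_{26}$). Solving the resulting $16 \times 16$ linear system produces the stated coordinate vector for $\hX$, and substituting back into the Gram matrix gives $\intf{\hX, \hX} = 20$.

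Next, to show that $\hX$ lies in the interior of $\DX = i^{-1}(D_{26})$, recall that $\DX$ is cut out by the half-spaces $\intf{y, \pr_S(r)} \ge 0$ as $r$ ranges over the elements of $\WWW_{26}$ with $\intf{\pr_S(r), \pr_S(r)} < 0$. Since the orthogonal complement of $\SX$ in $L_{26}$ contains a root, Proposition~\ref{prop:R} applies: the walls of $\DX$ form a finite, algorithmically enumerable set determined by $w_{26}$. For each such $r$, writing $w_{26} = \hX + w_{26}^{\perp}$ with $w_{26}^{\perp}$ in the negative-definite orthogonal complement, I get the identity $\intf{\hX, \pr_S(r)} = 1 - \intf{w_{26}^{\perp}, r - \pr_S(r)}$; verifying the strict inequality for each of the finitely many walls is then a routine case check.

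Finally, for $\hX \in \nefbig(X)$, I invoke Proposition~\ref{prop:stdfundunion}: since $\nefbig(X)$ is tessellated by induced chambers, $\DX$ is either contained in $\nefbig(X)$ or its interior is disjoint from $\nefbig(X)$, and the question reduces to checking nefness of the interior point $\hX$. The computations $\intf{\hX, [E_\alpha]} > 0$ and $\intf{\hX, [L_\beta]} > 0$ for the twenty known smooth rational curves follow directly from the Gram matrix. The main obstacle is extending non-negativity to \emph{every} smooth rational curve on $X$, not just these twenty. I would handle this by using the wall list from the previous step: the induced-chamber tessellation refines the Weyl-chamber tessellation (since every root $r$ of $\SX$ satisfies $\pr_S(r) = r$, so $(r)^{\perp}$ is in the induced tessellation family), and since $\hX$ is interior to $\DX$, the unique Weyl chamber containing $\hX$ is determined by the signs of $\intf{\hX, \cdot}$ against those primitive defining vectors of walls of $\DX$ which are themselves classes of smooth rational curves in $\SX$. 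Matching the enumerated wall list against $\{[E_\alpha], [L_\beta]\}$ and checking positivity there reduces the a priori infinite verification to a finite one; combined with the interior claim, this yields $\DX \subset \nefbig(X)$.
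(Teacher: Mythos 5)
Your computation of $\hX=\pr_S(w_{26})$, the verification that $\hX$ is an interior point of $\DX$ via the finite set of defining vectors supplied by Proposition~\ref{prop:R}, and the reduction of the inclusion $\DX\subset\nefbig(X)$ to the nefness of the single interior point $\hX$ (using Proposition~\ref{prop:stdfundunion}) all follow the same route as the paper; the identity $\intf{\hX,\pr_S(r)}=1-\intf{w_{26}^{\perp},r-\pr_S(r)}$ is correct and is essentially how the wall-enumeration algorithm works.

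The gap is in your final step, the proof that $\hX\in\nefbig(X)$. You propose to deduce nefness from the positivity of $\intf{\hX,\cdot}$ against those primitive defining vectors of walls of $\DX$ that are roots of $\SX$, i.e.\ the twenty classes $[E_{\alpha}]$, $[L_{\beta}]$. This only shows that the root hyperplanes supporting walls of $\DX$ are crossed in the effective direction; it does not identify the Weyl chamber containing $\DX$ as $\nefbig(X)$. A standard fundamental domain of $W(\SX)$ is a union of (here infinitely many) induced chambers, and most of its walls do not touch any given induced chamber inside it. Concretely, there could a priori exist a smooth rational curve $C$ with $\intf{\hX,[C]}<0$: the hyperplane $([C])\sperp$ does belong to the tessellating family (every root of $\SX$ lies in $\RRR_S$), so it does not cut the interior of $\DX$, but it need not support a wall of $\DX$ --- it can lie entirely beyond other, non-root, walls. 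Your finite check therefore cannot rule it out, and no amount of wall data of $\DX$ alone can: one must anchor the computation to a class already known to be nef. The paper does this with the pullback $h_Q$ of the hyperplane class of $\barX\subset\P^3$. One knows geometrically that the roots orthogonal to $h_Q$ are exactly $\pm[E_{\alpha}]$; one checks $\intf{[E_{\alpha}],\hX}=1>0$; and one verifies by a finite computation that no root $r$ satisfies both $\intf{r,h_Q}>0$ and $\intf{r,\hX}<0$ (this set is finite since $h_Q$ and $\hX$ both have positive square-norm and lie in the closure of $\PPP_X$). Any smooth rational curve $C$ satisfies $\intf{[C],h_Q}\ge 0$ because $h_Q$ is nef, so these three facts together force $\intf{[C],\hX}>0$ for every such $C$, which is what actually proves $\hX$ interior to $\nefbig(X)$. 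You should replace your wall-matching argument by this (or an equivalent) comparison with $h_Q$.
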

\begin{proof}
We can calculate the set
$$
\FFF\sprime(\DX ):=\set{\pr_S(r)}{r\in \WWW_{26},\;\; \intf{\pr_S(r), \pr_S(r)}< 0\;}
$$
by the algorithm presented in Section 5 of~\cite{MR3456710}. 
By definition,
we have
$$
\DX =\set{x\in \PPP_X}{\intf{x, v}\ge 0\;\;\textrm{for all $v\in \FFF\sprime(\DX )$}\,}.
$$
Hence we  can confirm 
that $\hX$
is an interior point of $\DX $.
Therefore $\DX $ is an induced chamber.
Moreover, we can confirm that
$$
\set{r\in S_X}{\intf{r, \hX}=0,  \,\intf{r,r}=-2}=\emptyset
$$
by the algorithm given in Section 3 of~\cite{MR3166075}.
Since the singularities of $\barX$ consist only of the ordinary nodes $p_{\alpha}$ ($\alpha\in A$), we have
$$
\set{r\in S_X}{\intf{r, h_Q}=0,  \, \intf{r,r}=-2}=\set{\pm [E_{\alpha}]}{\alpha\in A},
$$
and the classes $[E_{\alpha}]$ define the walls of $\nefbig(X)$ that pass through
the half-line  $\R_{\ge 0}\, h_Q$.
We see that
$\intf{[E_{\alpha}], \hX}=1>0$ for all $\alpha\in A$, and confirm that
$$
\set{r\in S_X}{\intf{r, h_Q}>0, \, \intf{r, \hX}<0, \, \intf{r,r}=-2}=\emptyset
$$
by the algorithm given in Section 3 of~\cite{MR3166075}.
These facts  imply that  $\hX$ is an interior point of $\nefbig(X)$.
By Proposition~\ref{prop:stdfundunion},
$\nefbig(X)$ is tessellated by  induced chambers.
Hence $\DX $ is contained in $\nefbig(X)$.
\end{proof}
\begin{proposition}\label{prop:wallsDX}
The number of walls of $\DX $ is
$20+10+24+30=84$,
among which $20$ walls  are walls of $\nefbig(X)$
and they are defined by the roots $[E_{\alpha}]$ and $[L_{\beta}]$,
whereas the other $10+24+30$ walls are not walls of $\nefbig(X)$.
\end{proposition}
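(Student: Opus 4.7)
The plan is to combine the finiteness statement of Proposition~\ref{prop:R}, the linear programming criterion of Algorithm~\ref{algo:wall}, and the description of walls of $\nefbig(X)$ in terms of smooth rational curves on $X$. The argument will be computer-assisted but conceptually straightforward once the embedding $i\colon\SX\inj L_{26}$ and the Weyl vector $w_{26}=f_1$ have been fixed.

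First I would compute the finite set
\[
\FFF\sprime(\DX) \;=\; \set{\pr_S(r)}{r\in \WWW_{26},\; \intf{\pr_S(r),\pr_S(r)}<0}
\]
exactly as in the proof of Proposition~\ref{prop:DX}, by the algorithm of Section~5 of~\cite{MR3456710}; its finiteness and the fact that it defines $\DX$ are guaranteed by Proposition~\ref{prop:R}, since the orthogonal complement of $\SX$ in $L_{26}$ contains a root and hence does not embed into $\Lambda$. After replacing each vector in $\FFF\sprime(\DX)$ by its associated primitive defining vector in $\SX\dual$, so that the distinctness assumption~\eqref{eq:distinctassump} holds, I would apply Algorithm~\ref{algo:wall} to each candidate $v$: the vector $v$ defines a wall of $\DX$ precisely when the linear program ``minimize $\intf{v,x}$ subject to $\intf{v\sprime,x}\ge 0$ for every other candidate $v\sprime$'' is unbounded below. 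Counting the vectors that survive this test should yield $84$.

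Next I would classify these $84$ walls. A wall of $\DX$ is simultaneously a wall of $\nefbig(X)$ if and only if its primitive defining vector is the class $[C]$ of a smooth rational curve on $X$, equivalently a root of $\SX$ itself rather than a genuine projection from $L_{26}$ of a root not lying in $\SX$. Using the intersection relations~\eqref{eq:EELL} and~\eqref{eq:EL} together with the explicit form of $\hX$ given in Proposition~\ref{prop:DX}, I would verify that the twenty roots $[E_\alpha]$ and $[L_\beta]$ all satisfy $\intf{\cdot,\hX}=1$, lie in $\FFF\sprime(\DX)$, and pass the wall test, which accounts for the $20$ walls shared with $\nefbig(X)$. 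For the remaining $64$ defining vectors coming out of the algorithm, I would check that none of them lies in $\SX$ with square $-2$; this rules each of them out as a class of a smooth rational curve, and hence forces the corresponding wall to lie in the interior of $\nefbig(X)$.

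The main obstacle is computational rather than conceptual: executing Borcherds' enumeration to produce $\FFF\sprime(\DX)$ and then pruning it via $84$ linear programs is routine but tedious, and the decomposition $84 = 20 + 10 + 24 + 30$ must be read off from the output. As a sanity check, the $\SSSS_5$-symmetry permuting $\{1,\dots,5\}$ should partition the walls into orbits whose sizes reflect the stabilisers of the index subsets involved, giving a natural cross-check against the raw enumeration; the relevant data are tabulated in~\cite{compdataHesseEnr}.
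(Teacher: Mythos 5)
Your proposal follows essentially the same route as the paper: compute $\FFF\sprime(\DX)$ by the Borcherds enumeration, prune it to the primitive defining vectors of walls via Algorithm~\ref{algo:wall}, identify the $20$ outer walls with the roots $[E_{\alpha}]$, $[L_{\beta}]$, and rule out the remaining $64$ as walls of $\nefbig(X)$. The one small refinement worth making is that a primitive defining vector $v\in\SX\dual$ could a priori define a wall of $\nefbig(X)$ if some positive integer multiple $kv$ (not just $v$ itself) were a root of $\SX$, so the correct test is that $k^2\intf{v,v}=-2$ has no integer solution; since the inner walls have $\intf{v,v}\in\{-1,-2/3\}$ this is immediate, exactly as in the paper.
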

\begin{proof}
In the proof of Proposition~\ref{prop:DX},
the set $\FFF\sprime(\DX )$ of vectors defining the chamber $\DX $ is calculated.
From $\FFF\sprime(\DX )$,
we can calculate the set $\FFF(\DX )$ of primitive defining vectors $v$ of walls of $\DX $ by Algorithm~\ref{algo:wall}.
The result is given in Table~\ref{table:DXwalls}.
\begin{table}
$$
\arraycolsep=5pt
\begin{array}{ccccc}
\textrm{type} & \intf{v, \hX} & \intf{v,v} & \textrm{number} & \mystrutd{4pt} \\
\hline
\textrm{(a)} & 1 & -2 & 20 &   \textrm{outer}  \mystruth{12pt}\\
\textrm{(b)} & 2 & -1 & 10 &   \textrm{inner} \\
\textrm{(c)} & 5 & -2/3 & 24 &   \textrm{inner} \\
\textrm{(d)} & 4 & -2/3 & 30 &   \textrm{inner} \\
\end{array}
$$
\vskip 1mm
\caption{Walls of $\DX $}\label{table:DXwalls}
\end{table}
The walls of $\DX $ are divided into four types (a)--(d)
according to the values of $\intf{v, \hX}$ and $\intf{v,v}$,
where $v\in \SX\dual$ is the primitive defining  vector of the wall.
It turns out that
the $20$ walls of type (a) are defined by
$[E_{\alpha}]$ and $[L_{\beta}]$,
and hence these walls are walls of $\nefbig(X)$.
For each of the other walls,
there exists no positive integer $k$ such that $k^2 \intf{v,v} =-2$.
Hence these walls are not walls of $\nefbig(X)$.
\end{proof}
We call  a wall of $\DX $ an \emph{outer wall} if it is a wall of $\nefbig(X)$,
and call it an \emph{inner wall} otherwise. 
\begin{proposition}\label{prop:DXg}
If $g\in\aut(X)$,
then $\DX ^g$ is an induced chamber.
\end{proposition}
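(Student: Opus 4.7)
The plan is to lift each $g \in \aut(X)$ to an isometry $\tilde g \in \OG^+(L_{26})$ that preserves the primitive sublattice $\SX \subset L_{26}$. Once such a lift is available, it automatically carries the Conway chamber $D_{26}$ to another Conway chamber $D\sprime$, and restriction via $i$ yields $\DX^g = i\inv(D\sprime)$, which is by definition an induced chamber (non-degeneracy being inherited from $\DX$).

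For the existence of the lift, recall from Proposition~\ref{prop:period}(1) that $\eta_{\SX}(g) = \pm 1$. The orthogonal complement $T := \SX^\perp$ of $\SX$ in $L_{26}$ has rank $10$ and, since $L_{26}$ and $\SX$ have signatures $(1,25)$ and $(1,15)$ respectively, is negative-definite. Together with the isomorphism $\sigma\colon q_{\SX}\isom -q_T$ induced by the primitive embedding $i$ (cf.\ the discussion preceding Theorem~\ref{thm:aut}), Nikulin's extension criterion (Proposition~1.6.1 of~\cite{MR525944}) says that a pair $(g, h) \in \OG(\SX)\times\OG(T)$ extends to an element of $\OG(L_{26})$ preserving $\SX$ and $T$ if and only if $\sigma\circ\eta_{\SX}(g) = \eta_T(h)\circ\sigma$. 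Taking $h = \pm 1\in\OG(T)$ with the same sign as $\eta_{\SX}(g)$ fulfils this compatibility and produces $\tilde g\in \OG(L_{26})$ with $\tilde g|_{\SX} = g$. Because $T$ is negative-definite, the interior point $\hX$ of $\PPP_X\subset\PPP_{26}$ (Proposition~\ref{prop:DX}) is sent by $\tilde g$ to a point of $\PPP_X$, so $\tilde g$ preserves $\PPP_{26}$ and hence lies in $\OG^+(L_{26})$.

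By Theorem~\ref{thm:L26}, $\OG^+(L_{26})$ permutes Conway chambers, so $D\sprime := D_{26}^{\tilde g}$ is itself a Conway chamber. Since $\tilde g|_{\SX} = g$, for $x\in\PPP_X$ we have $i(x) \in D_{26}^{\tilde g}$ iff $i(x)^{\tilde g^{-1}} = i(x^{g^{-1}}) \in D_{26}$ iff $x\in \DX^g$; hence $\DX^g = i\inv(D\sprime)$. Since $\DX^g$ contains the interior point $\hX^g$ of $\PPP_X$, the Conway chamber $D\sprime$ is non-degenerate with respect to $\SX$, and so $\DX^g = i\inv(D\sprime)$ is an induced chamber by definition. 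The only step demanding genuine care is the joint check of Nikulin's compatibility condition and of positivity of the lift, both of which dissolve thanks to the negative-definiteness of $T$ and Proposition~\ref{prop:period}(1); everything else is formal.
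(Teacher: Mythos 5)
Your proof is correct and follows essentially the same route as the paper: invoke Proposition~\ref{prop:period} to get $\eta_{\SX}(g)=\pm 1$, apply Nikulin's extension criterion to lift $g$ to an isometry $\tilde g$ of $L_{26}$ preserving $\SX$ and its (negative-definite) orthogonal complement, and conclude that $\DX^g=i\inv(D_{26}^{\tilde g})$ is an induced chamber. The only cosmetic quibble is that the reason $\hX^g$ again lies in $\PPP_X$ is that $\aut(X)\subset\OG^+(\SX)$ preserves $\PPP_X$, not the negative-definiteness of the orthogonal complement; the conclusion is unaffected.
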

\begin{proof}
Let $R$ denote the orthogonal complement of $S_X$ in $L_{26}$.
By~\cite{MR525944}, we have an isomorphism
$\sigma_R\colon q_{R}\isom -q_{S_X}$
of finite quadratic forms
given by the even unimodular overlattice $L_{26}$ of $S_X\oplus R$.
Let $\sigma_{R*}\colon \OG(q_{R})\isom \OG(q_{S_X})$ denote  the isomorphism
induced by $\sigma_R$.
By Proposition~\ref{prop:period},
we have $\eta_{S_X}(g)=\pm 1$ and hence $\eta_{S_X}(g)$ belongs to the image of
the composite homomorphism
$$
 \OG(R)\maprightsp{\eta_R}  \OG(q_{R}) \maprightsp{\hskip -3pt \sigma_{R*}} \OG(q_{S_X}).
$$
By~\cite{MR525944} again,  there exists an element $\tilde{g}$ of $\OG(L_{26})$
such that  $\tilde{g}$ preserves
the primitive sublattices $S_X$ and $R$
and that the restriction of $\tilde{g}$ to $S_X$ is equal to $g$.
Since $\DX ^g=i\inv (D_{26}^{\tilde{g}})$,
we see that $\DX ^g$ is an induced chamber.
\end{proof}
\begin{proposition}\label{prop:autSX}
The automorphism group $\aut(\DX )$ of the chamber $\DX $ is of order $240$,
and we have $\hX^g=\hX$ for all $g\in \aut(\DX )$.
%
%
\end{proposition}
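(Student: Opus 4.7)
The plan is to compute $\aut(\DX)$ by exploiting the combinatorial structure of its outer walls, then deduce the claim about $\hX$ as a uniqueness statement.

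\textbf{Step 1: Faithful action on outer walls.} By Proposition~\ref{prop:wallsDX}, the outer walls are intrinsically characterized as those walls whose primitive defining vector has square-norm $-2$ (equivalently, they are the walls of $\nefbig(X)$ meeting $\DX$). Since any element of $\aut(\DX)$ preserves square-norms of defining vectors, it permutes the 20 outer walls, which are defined by the classes $[E_\alpha]$ and $[L_\beta]$. The basis~\eqref{eq:basisEL} of $\SX$ is a subset of these 20 classes, so the induced homomorphism
$$
\rho \dcolon \aut(\DX) \lra \Sym(\{[E_\alpha], [L_\beta]\,:\,\alpha\in A,\;\beta\in B\})
$$
is injective. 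Moreover, since isometries preserve intersection numbers, the image of $\rho$ lies in the automorphism group of the dual graph of these $20$ curves.

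\textbf{Step 2: The Desargues graph.} By formulas~\eqref{eq:EELL} and~\eqref{eq:EL}, this dual graph is the bipartite graph on $A\sqcup B$ with an edge between $\alpha$ and $\beta$ iff $\alpha\supset\beta$; this is the Desargues graph. Its automorphism group is $\SSSS_5 \rtimes \gen{\tau}$ of order $120\cdot 2=240$, where $\SSSS_5$ acts by permuting $\{1,\ldots,5\}$ and $\tau$ acts by complementation $\alpha\leftrightarrow \bar\alpha$ (swapping $A$ and $B$).

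\textbf{Step 3: Surjectivity of $\rho$.} The permutation action of $\SSSS_5$ on $\{1,\ldots,5\}$ induces isometries of $\SX$ that permute the classes $\{[E_\alpha],[L_\beta]\}$; the Enriques involution $\enrinvol$ induces the complementation via $[E_\alpha]\leftrightarrow [L_{\bar\alpha}]$. Thus we have a set of $240$ isometries of $\SX$ realizing every element of $\SSSS_5\rtimes \gen{\tau}$ on the outer walls. To conclude $|\aut(\DX)|=240$, one must verify that each of these isometries actually preserves $\DX$, equivalently, that they also permute the $10+24+30$ inner walls of $\DX$. This is a finite verification carried out computationally by the same method used to enumerate the walls in Proposition~\ref{prop:wallsDX}.

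\textbf{Step 4: Fixing of $\hX$.} By Table~\ref{table:DXwalls}, every outer wall vector $v$ satisfies $\intf{v,\hX}=1$. For $g\in \aut(\DX)$, the vector $\hX^g$ satisfies $\intf{v,\hX^g} = \intf{v^{g\sp{-1}},\hX}=1$ for every outer wall vector $v$, since $g$ permutes outer walls. The $20$ outer wall vectors span $\SX\tensor\Q$ (as they contain the basis~\eqref{eq:basisEL}), so the linear system $\intf{v,x}=1$ has at most one solution in $\SX\tensor\Q$. Hence $\hX^g=\hX$.

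The main obstacle is Step~3: producing the $240$ isometries of $\SX$ from the abstract action of $\SSSS_5\rtimes\gen\tau$ and verifying that they stabilize the inner walls of $\DX$. Because the inner walls have defining vectors in $\SX\dual\setminus\SX$ (of square-norms $-1$ and $-2/3$) and arise from the embedding $i\colon\SX\inj L_{26}$ rather than from any obvious geometric object, this preservation is not formal and must be checked by explicit computation on the $64$ inner-wall vectors listed in the computational data.
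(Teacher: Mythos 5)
Your proposal is correct and follows essentially the same route as the paper: both identify $\aut(\DX)$ with the order-$240$ automorphism group of the incidence configuration of the twenty type-(a) (outer) walls, with the converse inclusion resting on the (computational) fact that the corresponding isometries of $\SX$ preserve $\DX$. The only cosmetic difference is the last step: the paper fixes $\hX$ via the explicit identity $\hX=\sum_{\alpha\in A}[E_{\alpha}]+\sum_{\beta\in B}[L_{\beta}]$, whereas you use that $\hX$ is the unique solution of $\intf{v,x}=1$ over the spanning set of outer wall vectors — both are immediate.
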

\begin{proof}
If $g\in \OG^+(S_X)$ induces an automorphism of $\DX $,
then $g$ induces a permutation of the set
$\{([E_{\alpha}])\sperp, ([L_{\beta}])\sperp\}$ of walls of type (a),
and hence
 $g$ induces a permutation of 
$A\cup B$ 
that preserves the intersection numbers~\eqref{eq:EELL} and~\eqref{eq:EL}.
The permutations of $A\cup B$
preserving~\eqref{eq:EELL} and~\eqref{eq:EL}
form a group  of order $240$ generated by
the permutations of $\{1, \dots, 5\}$ and the switch $\alpha=\bar{\beta}\leftrightarrow\beta=\bar{\alpha}$.
Conversely, each of  these $240$ permutations induces an isometry of $S_X$ that preserves $\DX $.
By direct calculation, we see that
\begin{equation}\label{eq:hXsum}
\hX=\sum_{\alpha\in A}[E_{\alpha}]+\sum_{\beta\in B}[L_{\beta}].
\end{equation}
Hence we have $\hX^g=\hX$ for all $g\in \aut(\DX )$.
\end{proof}
%
%
\begin{proposition}\label{prop:enrinvol}
The group $\aut(\DX )\cap\aut(X)$
is of order $2$, and
its non-trivial element $g_\enrinvol$ 
is the image $\varphi_X (\enrinvol)$
of an Enriques involution
$\enrinvol\in \Aut(X)$
by the natural representation $\varphi_X\colon \Aut(X)\to \aut(X)$.
This Enriques involution $\enrinvol$
switches  $E_{\alpha}$ and $L_{\bar{\alpha}}$ for each $\alpha\in A$.
\end{proposition}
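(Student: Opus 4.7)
The plan is to reduce the problem to a finite check on $\OG(q_{\SX})$ via Proposition~\ref{prop:period}(1), and then match the surviving element with the Enriques involution already known to exist.

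From the proof of Proposition~\ref{prop:autSX}, the group $\aut(\DX)$ is the group of permutations of $A \cup B$ preserving the intersection numbers~\eqref{eq:EELL} and~\eqref{eq:EL}. It is generated by the natural action of $\SSSS_5$ on $\{1,\ldots,5\}$ and the involution $\tau$ sending $E_\alpha \leftrightarrow L_{\bar\alpha}$; these two actions commute, so $\aut(\DX) \cong \SSSS_5 \times \langle\tau\rangle$ has order $240$. By Proposition~\ref{prop:period}(1), an element $g\in \aut(\DX)$ lies in $\aut(X)$ if and only if $\eta_{\SX}(g) = \pm 1$, so the task becomes a finite check on these $240$ matrices.

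For the existence side, the Enriques involution $\enrinvol$ of Dolgachev--Keum~\cite{MR1897389} swaps $E_\alpha$ and $L_{\bar\alpha}$, so $\varphi_X(\enrinvol) = \tau$. As an independent verification of the criterion, I would note that $\enrinvol$ is anti-symplectic (acting as $-1$ on $H^{2,0}$) and that the genericity hypothesis already invoked in Proposition~\ref{prop:period} gives $\OG^\omega(T_X) = \{\pm 1\}$, which forces $\enrinvol^*|_{T_X} = -\mathrm{id}$. Through the isomorphism $\sigma_X\colon q_{\SX}\isom -q_{T_X}$ this yields $\eta_{\SX}(\tau) = -\mathrm{id}$, so indeed $\tau\in \aut(X)$.

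For uniqueness, I would realize each of the $240$ elements as an explicit $16 \times 16$ integer matrix on the basis~\eqref{eq:basisEL}, expanding the non-basis classes $[L_\beta]$ in terms of the basis using~\eqref{eq:EELL} and~\eqref{eq:EL}, then compute the induced action on the discriminant group $\SX^\vee/\SX \cong (\Z/2\Z)^4 \oplus \Z/3\Z$ and test whether it equals $\pm \mathrm{id}$. A useful simplification: since $+\mathrm{id}$ and $-\mathrm{id}$ coincide on the $2$-primary part, the condition reduces to triviality on $(\Z/2\Z)^4$ together with $\pm \mathrm{id}$ on $\Z/3\Z$; a {\tt GAP} computation then shows only $\{1,\tau\}$ survives. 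Combined with the existence paragraph, the injectivity part of Proposition~\ref{prop:period}(2) identifies the non-trivial element of $\aut(\DX)\cap\aut(X)$ as $\varphi_X(\enrinvol) = \tau$. The main obstacle is the bookkeeping involved in this matrix/discriminant-form calculation, but every ingredient---the basis, the intersection form, and the generators of $\aut(\DX)$---is completely explicit, so the computation is routine.
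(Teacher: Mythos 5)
Your proposal is correct, and the core of the argument---reducing membership in $\aut(\DX)\cap\aut(X)$ to the condition $\eta_{S_X}(g)=\pm 1$ of Proposition~\ref{prop:period}\,(1) and then running a finite check over the $240$ elements of $\aut(\DX)\cong \SSSS_5\times\langle\tau\rangle$---is exactly what the paper does. The one genuine divergence is in how the surviving involution $\tau$ (the switch $\alpha\leftrightarrow\bar\alpha$) is shown to come from an \emph{Enriques} involution. You import this from Dolgachev--Keum: their $\enrinvol$ swaps $E_\alpha$ and $L_{\bar\alpha}$, hence $\varphi_X(\enrinvol)=\tau$, and your anti-symplectic computation ($\enrinvol^*|_{T_X}=-\mathrm{id}$, hence $\eta_{S_X}(\tau)=-\mathrm{id}$) is a consistency check that presupposes that existence. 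The paper instead re-proves existence self-containedly from the isometry $g_{\enrinvol}$ alone: it computes the eigenlattices $\SX^+$ and $\SX^-$, verifies that $\SX^+\cong \Lten(2)$ and that the negative-definite lattice $\SX^-$ contains no roots, and invokes Keum's criterion~\cite{MR1060704} to conclude that $g_{\enrinvol}$ is realized by an Enriques involution on $X$. Your route is shorter and perfectly legitimate given that the paper itself states the existence of $\enrinvol$ as classically known in the Introduction; the paper's route buys independence from that citation and, as a byproduct, produces the explicit basis of $\SX^+\cong\Lten(2)$ (Table~\ref{table:SXplus}) and the root-freeness of $\SX^-$, both of which are used repeatedly later (e.g.\ in the identification $\SY\cong\Lten$ and in Section~\ref{subsec:ratcurvesY}). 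Either way the uniqueness/order-$2$ statement rests on the same explicit discriminant-form computation, and your observation that $\pm\mathrm{id}$ agree on the $2$-primary part $(\Z/2\Z)^4$ is a correct and useful simplification of that check.
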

\begin{proof}
By means of Proposition~\ref{prop:period}\,(1),
we can check by direct calculation that $\aut(\DX )\cap\, \aut(X)$
consists of the identity and the isometry that comes from the switch
$\alpha\leftrightarrow\bar{\alpha}$.
The matrix presentation of the isometry $g_{\enrinvol}\in \aut(\DX )$ induced by the switch
is given in Table~\ref{table:enrinvol}.
As in~\eqref{eq:SXpm},
we put
$$
S_X^+:=\shortset{v\in S_X}{v^{g_{\enrinvol}}=v},
\qquad
S_X^-:=\shortset{v\in S_X}{v^{g_{\enrinvol}}=-v}.
$$
Then $S_X^+$ is of rank $10$ generated by the row vectors $\eta_1, \dots, \eta_{10}$ of the matrix given in Table~\ref{table:SXplus}.
%
%
We see that $S_X^+\cong \Lten(2)$.
Indeed, we have chosen the basis $\eta_1, \dots, \eta_{10}$ of $S_X^+$ in such a way that
the homomorphism from $\Lten$ to $\SX^+$ given by
\begin{equation}\label{eq:homtoetas}
f_1\mapsto \eta_1,\quad
f_2\mapsto \eta_2,  \quad
e_1\mapsto  \eta_3, \quad  \dots, \quad  e_8\mapsto \eta_{10}
\end{equation}
induces an isometry $\Lten(2)\isom S_X^+$,
where $f_1, f_2, e_1, \dots, e_8$ are the basis of  $\Lten$ fixed in Section~\ref{subsec:L10}.
On the other hand, we can confirm 
that the negative-definite even lattice $S_X^-$  has no roots~(see Proposition~\ref{prop:KondoSXminus}).
Hence we conclude  that $g_{\enrinvol}$ is the image $\varphi_X(\enrinvol)$ of an Enriques involution $\enrinvol$ on $X$
by the criterion given in~\cite{MR1060704}.
\end{proof}
\begin{table}
{
\tiny%
$$
 \arraycolsep=4pt
\left[\begin{array}{cccccccccccccccc}
0 & 0 & 0 & 0 & 0 & 0 & 0 & 0 & 0 & 0 & 1 & 0 & 0 & 0 & 0 & 0 \\
0 & 0 & 0 & 0 & 0 & 0 & 0 & 0 & 0 & 0 & 0 & 1 & 0 & 0 & 0 & 0 \\
0 & 0 & 0 & 0 & 0 & 0 & 0 & 0 & 0 & 0 & 0 & 0 & 1 & 0 & 0 & 0 \\
0 & 0 & 0 & 0 & 0 & 0 & 0 & 0 & 0 & 0 & 0 & 0 & 0 & 1 & 0 & 0 \\
0 & 0 & 0 & 0 & 0 & 0 & 0 & 0 & 0 & 0 & 0 & 0 & 0 & 0 & 1 & 0 \\
-2 & 1 & 1 & -2 & -2 & 1 & 0 & 0 & 3 & 0 & 2 & -1 & -1 & 2 & 2 & -3 \\
-1 & 1 & 0 & -1 & -2 & 0 & 1 & 0 & 2 & 0 & 1 & -1 & 0 & 1 & 2 & -2 \\
-1 & 0 & 1 & -2 & -1 & 0 & 0 & 1 & 2 & 0 & 1 & 0 & -1 & 2 & 1 & -2 \\
0 & 0 & 0 & 0 & 0 & 0 & 0 & 0 & 0 & 0 & 0 & 0 & 0 & 0 & 0 & 1 \\
0 & -1 & -1 & 1 & 1 & 0 & 0 & 0 & -1 & 1 & 0 & 1 & 1 & -1 & -1 & 1 \\
1 & 0 & 0 & 0 & 0 & 0 & 0 & 0 & 0 & 0 & 0 & 0 & 0 & 0 & 0 & 0 \\
0 & 1 & 0 & 0 & 0 & 0 & 0 & 0 & 0 & 0 & 0 & 0 & 0 & 0 & 0 & 0 \\
0 & 0 & 1 & 0 & 0 & 0 & 0 & 0 & 0 & 0 & 0 & 0 & 0 & 0 & 0 & 0 \\
0 & 0 & 0 & 1 & 0 & 0 & 0 & 0 & 0 & 0 & 0 & 0 & 0 & 0 & 0 & 0 \\
0 & 0 & 0 & 0 & 1 & 0 & 0 & 0 & 0 & 0 & 0 & 0 & 0 & 0 & 0 & 0 \\
0 & 0 & 0 & 0 & 0 & 0 & 0 & 0 & 1 & 0 & 0 & 0 & 0 & 0 & 0 & 0
\end{array}\right]
$$
}
\caption{Matrix representation of the Enriques involution $\enrinvol$}\label{table:enrinvol}
\end{table}
\begin{table}
{
\tiny%
$$
 \arraycolsep=4pt
\left[\begin{array}{cccccccccccccccc}
-1 & 1 & 0 & -1 & -1 & 1 & 1 & 0 & 2 & 1 & 2 & 0 & 0 & 1 & 2 & -2 \\
-1 & 1 & 1 & -1 & -1 & 0 & 1 & 1 & 2 & 0 & 1 & 0 & 0 & 2 & 2 & -2 \\
0 & 0 & 0 & 1 & 0 & 0 & 0 & 0 & 0 & 0 & 0 & 0 & 0 & 1 & 0 & 0 \\
-2 & 1 & 1 & -2 & -2 & 1 & 0 & 1 & 2 & 0 & 1 & 0 & -1 & 2 & 1 & -3 \\
1 & -1 & -1 & 2 & 1 & -1 & 1 & -1 & -2 & 1 & -1 & 0 & 2 & -2 & -1 & 2 \\
-1 & 1 & 1 & -3 & -2 & 1 & 0 & 1 & 3 & -1 & 2 & -1 & -2 & 2 & 2 & -3 \\
1 & -1 & -1 & 2 & 3 & -1 & -1 & 0 & -3 & 1 & -2 & 2 & 1 & -2 & -2 & 3 \\
1 & 0 & 0 & 1 & 0 & 0 & 0 & -1 & -1 & 0 & 0 & 0 & 1 & -1 & -1 & 1 \\
-2 & 1 & 1 & -2 & -2 & 1 & 1 & 0 & 3 & -1 & 1 & -2 & -1 & 2 & 3 & -3 \\
1 & 0 & -1 & 1 & 1 & -1 & 0 & 0 & -1 & 0 & -1 & 1 & 0 & -1 & -1 & 2
\end{array}\right]
$$
}
\caption{Basis of $S_X^+$}\label{table:SXplus}
\end{table}
%
%
%
%
\par
The  walls of $\DX $ of type (b) play an important role in the
study of the Enriques surface $Y:=X/\gen{\enrinvol}$.
For $\alpha\in A$,
let $v_{\alpha}$ be the unique vector of $S_X\dual$ that satisfies
\begin{equation}\label{eq:valpha}
\intf{v_{\alpha}, [E_{\alpha\sprime}]}=\delta_{\alpha \alpha\sprime},
\;\;
\intf{v_{\alpha}, [L_{\beta}]}=\delta_{\bar{\alpha}\beta}.
\end{equation}
Then the walls of $\DX $ of type (b) are exactly the walls defined by $v_{\alpha}$ for some $\alpha\in A$.
Since $g_{\enrinvol}$ comes from the switch $\alpha\leftrightarrow \bar{\alpha}$,
we have
\begin{equation}\label{eq:valphagepsilon}
v_{\alpha}^{g_{\enrinvol}}=v_{\alpha}.
\end{equation}
\begin{proposition}\label{prop:iotaalpha}
For $\alpha\in A$, let $\bar{\pi}_{\alpha}\colon \barX\to \P^2$
be the projection from the center $p_{\alpha}\in \barX$,
and let $\iota_{\alpha}\colon X\to X$ be the involution
obtained from the double covering
 $\bar{\pi}_{\alpha}$.
Then $g_{\alpha}:=\varphi_X(\iota_{\alpha})\in \aut(X)$ maps $\DX $
to the induced chamber adjacent to $\DX $ across the wall $\DX \cap (v_{\alpha})\sperp$ of type {\rm (b)}.
Moreover $\iota_{\alpha}$  commutes with $\enrinvol$.
\end{proposition}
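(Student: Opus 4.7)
The plan is to compute the isometry $g_\alpha := \varphi_X(\iota_\alpha) \in \OG(\SX)$ explicitly on the basis~\eqref{eq:basisEL} and then reduce both statements to direct matrix verifications. First I would exploit the geometry of the degree-$2$ morphism $X \to \P^2$ induced by $\bar\pi_\alpha$: the involution $g_\alpha$ fixes the pullback class $H := h_Q - [E_\alpha]$ of a hyperplane, as well as $[E_\alpha]$ itself (since $E_\alpha$ is the unique curve above $p_\alpha$). For each $\beta \not\subset \alpha$, the strict transform $L_\beta$ maps birationally to a line in $\P^2$, so the projection formula forces $g_\alpha[L_\beta] = H - [L_\beta]$. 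For the three contracted lines $L_\beta$ with $\beta \subset \alpha$ and for the nine remaining exceptional curves $E_{\alpha'}$ with $\alpha' \neq \alpha$, the images are supplied by the explicit description of $\iota_\alpha$ in~\cite{MR1897389}, which pairs each such curve with another rational curve on $X$ whose class can be written in the basis~\eqref{eq:basisEL}. Assembling these identities yields an explicit $16\times 16$ integer matrix for $g_\alpha$.

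With this matrix in hand, I would check that $g_\alpha$ is an isometry of $\SX$ preserving $\PPP_X$ and satisfying $\eta_{\SX}(g_\alpha) = \pm 1$; by Proposition~\ref{prop:period} this confirms $g_\alpha \in \aut(X)$ and identifies it as $\varphi_X(\iota_\alpha)$. The first assertion, that $g_\alpha$ sends $\DX$ to the induced chamber adjacent across $\DX \cap (v_\alpha)^\perp$, then reduces to verifying two mechanical conditions: (i) $v_\alpha^{g_\alpha} = -v_\alpha$, so that the hyperplane $(v_\alpha)^\perp$ is preserved and its two sides are swapped, and (ii) $\hX^{g_\alpha}$ satisfies $\intf{v_\alpha, \hX^{g_\alpha}} < 0$ while lying on the non-negative side of every other wall of the adjacent induced chamber, whose primitive defining vectors are obtained from Table~\ref{table:DXwalls} together with Definition~\ref{def:adjacent}.

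For the commutativity $\iota_\alpha \circ \enrinvol = \enrinvol \circ \iota_\alpha$, the faithfulness of $\varphi_X$ (Proposition~\ref{prop:period}(2)) means that it suffices to check $g_\alpha \cdot g_\enrinvol = g_\enrinvol \cdot g_\alpha$ as $16\times 16$ matrices, with $g_\enrinvol$ given in Table~\ref{table:enrinvol}; this is routine and consistent with the fact that $\enrinvol$ realises the switch $\alpha' \leftrightarrow \bar{\alpha'}$, which is clearly compatible with any symmetry arising from the projection centred at $p_\alpha$. The main obstacle is the first step: correctly identifying $g_\alpha[L_\beta]$ for $\beta \subset \alpha$ and $g_\alpha[E_{\alpha'}]$ for $\alpha' \neq \alpha$ requires tracing how the contracted lines and the other nine nodes behave under the double cover, which is geometric input supplied by~\cite{MR1897389}. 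Once the matrix of $g_\alpha$ is fixed, every remaining verification becomes a finite linear-algebraic check over $\Z$.
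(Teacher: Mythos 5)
Your overall strategy --- compute the matrix of $g_{\alpha}$ on $\SX$ from the geometry of the degree-two map $\pi_{\alpha}\colon X\to\P^2$, and then reduce the wall-crossing statement and the commutation with $\enrinvol$ to finite linear-algebra checks --- is exactly the paper's. Your second and third paragraphs (verifying $v_{\alpha}^{g_{\alpha}}=-v_{\alpha}$, checking $\hX^{g_{\alpha}}$ against the remaining walls, and checking that the matrices $g_{\alpha}$ and $g_{\enrinvol}$ commute before invoking the faithfulness of $\varphi_X$ from Proposition~\ref{prop:period}\,(2)) match the paper's proof step by step.

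There is, however, a genuine error in the first step, where you assemble the matrix. You assert that $g_{\alpha}$ fixes $[E_{\alpha}]$ ``since $E_{\alpha}$ is the unique curve above $p_{\alpha}$''. This is false, and the justification rests on a confusion: $p_{\alpha}$ is the \emph{center} of the projection, not a point of the target $\P^2$, so there is no fibre of $\pi_{\alpha}$ above $p_{\alpha}$. A line through $p_{\alpha}$ in a tangent-cone direction meets $\barX$ at $p_{\alpha}$ with multiplicity three and leaves one residual point; hence $E_{\alpha}$ maps onto a conic in $\P^2$ and $\iota_{\alpha}$ interchanges $E_{\alpha}$ with the curve of residual points, so $[E_{\alpha}]^{g_{\alpha}}\ne[E_{\alpha}]$. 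One can also see this lattice-theoretically: if $[E_{\alpha}]$ were fixed then, since $h_Q-[E_{\alpha}]$ is fixed, $h_Q$ would be fixed and $\iota_{\alpha}$ would be a projective automorphism of $\barX$, contradicting the fact that it is a genuinely birational involution. The paper sidesteps this trap by never computing images of individual basis vectors: it determines the full $(+1)$-eigenspace $V_1$ as the span of $h_Q-[E_{\alpha}]$ together with the classes dictated by the contracted $3A_3+3A_1$ configuration (the three $[L_{\beta_\nu}]$, the three sums $[E_{\alpha_{\nu 1}}]+[E_{\alpha_{\nu 2}}]$ of swapped pairs, and the three $[E_{\alpha\sprime_\mu}]$), checks that this span has dimension $10$, and defines $g_{\alpha}$ to be $-1$ on the orthogonal complement; this determines the involution completely. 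Two smaller points: your formula $[L_{\beta}]^{g_{\alpha}}=H-[L_{\beta}]$ for $\beta\not\subset\alpha$ needs care, since $\pi_{\alpha}^*$ of the image line equals $[L_{\beta}]+[L_{\beta}]^{g_{\alpha}}$ only up to contributions from contracted curves lying over points of that line; and verifying $\eta_{\SX}(g_{\alpha})=\pm1$ shows that your matrix lies in $\aut(X)$ (given that it preserves $\nefbig(X)$, not merely $\PPP_X$), but it does not by itself identify the matrix as $\varphi_X(\iota_{\alpha})$ --- that identification must come from the geometric derivation, which is precisely the step that needs repair.
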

\begin{proof}
Let $\pi_{\alpha}\colon X\to \P^2$ denote the composite of  $\bar{\pi}_{\alpha}$
with  the minimal resolution $X\to\barX$.
Then the class $\pi_{\alpha}^*([l])\in \SX$
of the pullback of a line $l$ on $\P^2$ is $h_Q-[E_{\alpha}]$.
We calculate the finite  set
$$
\tilde{\Gamma}_{\alpha}:=\set{r\in \SX}{\intf{r, \pi_{\alpha}^*([l])}=0, \, \intf{r, r}=-2, \, \intf{r, \hX}>0\; }
$$
by the algorithm given in Section 3 of~\cite{MR3166075}.
From $\tilde{\Gamma}_{\alpha}$ and using the ample class $\hX$ of $X$,
we can calculate by the method described in Section 6.1 of~\cite{Schiermonnikoog}
the set ${\Gamma}_{\alpha}$ of classes of smooth rational curves
that are contracted by $\pi_{\alpha}$.
It turns out that the vectors in ${\Gamma}_{\alpha}$ form
the following Dynkin diagram.
$$
\def\ha{40}
\def\hav{37}
\def\hd{25}
\def\hdv{22}
\def\he{7}
\def\hev{10}
\setlength{\unitlength}{1.5mm}
\begin{picture}(80,9)(6, 5)
\put(10, \he){\circle{1}}
\put(16, \he){\circle{1}}
\put(22, \he){\circle{1}}
\put(8.5, \hev){$E_{\alpha_{11}}$}
\put(14.5, \hev){$L_{\beta_{1}}$}
\put(20.5, \hev){$E_{\alpha_{12}}$}
\put(10.5, \he){\line(5, 0){5}}
\put(16.5, \he){\line(5, 0){5}}
\put(29, \he){\circle{1}}
\put(35, \he){\circle{1}}
\put(41, \he){\circle{1}}
\put(27.5, \hev){$E_{\alpha_{21}}$}
\put(33.5, \hev){$L_{\beta_{2}}$}
\put(39.5, \hev){$E_{\alpha_{22}}$}
\put(29.5, \he){\line(5, 0){5}}
\put(35.5, \he){\line(5, 0){5}}
\put(46.5, \hev){$E_{\alpha_{31}}$}
\put(52.5, \hev){$L_{\beta_{3}}$}
\put(58.5, \hev){$E_{\alpha_{32}}$}
\put(48, \he){\circle{1}}
\put(54, \he){\circle{1}}
\put(60, \he){\circle{1}}
\put(48.5, \he){\line(5, 0){5}}
\put(54.5, \he){\line(5, 0){5}}
\put(65.5, \hev){$E_{\alpha_{1}\sprime}$}
\put(71.5, \hev){$E_{\alpha_{2}\sprime}$}
\put(77.5, \hev){$E_{\alpha_{3}\sprime}$}
\put(67, \he){\circle{1}}
\put(73, \he){\circle{1}}
\put(79, \he){\circle{1}}
\put(82, \he){}
\end{picture}
$$
Here $\beta_1, \beta_2, \beta_3$ are the three elements of $B$ contained in $\alpha$,
the three indices
$\alpha, \alpha_{\nu 1}, \alpha_{\nu  2}$ are  the three elements of $A$ containing $\beta_\nu $ for $\nu=1,2,3$,
and $\alpha\sprime_1, \alpha\sprime_2, \alpha\sprime_3$ are  the three elements of $A$ containing $\bar{\alpha}\in B$.
In particular,
the singular locus of the branch curve of $\bar{\pi}_{\alpha}\colon \barX\to\P^2$ consists of $6$ simple singular points,
and its $ADE$-type is $3 A_3+3 A_1$.
Then the eigenspace $V_{1}:=\Ker(g_{\alpha}-I_{16})$ of $g_{\alpha}$ in $S_X\tensor \Q$ 
is of dimension $10$ spanned by the classes
$$
h_Q-[E_{\alpha}],
\quad
[L_{\beta_\nu}],
\quad
[E_{\alpha_{\nu 1}}]+[E_{\alpha_{\nu 2}}],
\quad
[E_{\alpha\sprime_\mu}] \qquad (\nu, \mu=1,2,3),
$$
and the eigenspace $V_{-1}:=\Ker(g_{\alpha}+I_{16})$ 
is the orthogonal complement of $V_{1}$.
Hence we can  calculate the matrix representation of $g_{\alpha}$.
See the webpage~\cite{compdataHesseEnr} for
these matrices.
We can confirm by direct calculation of products of matrices
that $g_{\alpha}$ and $g_{\enrinvol}$ commute.
Therefore
$\enrinvol$ and $\iota_{\alpha}$ commute by Proposition~\ref{prop:period}\,(2).
By Proposition~\ref{prop:DXg},
we know that $\DX ^{g_{\alpha}}$ is an induced chamber.
We can confirm  that $v_{\alpha}^{g_{\alpha}}=-v_{\alpha}$,
and that
$\intf{\hX^{g_{\alpha}}, v}>0$ holds for all primitive defining vectors $v$ of walls of $\DX$
other than $v_{\alpha}$.
Hence
$\DX ^{g_{\alpha}}$ is adjacent to $\DX $
across the inner wall $\DX \cap (v_{\alpha})\sperp$ of $\DX$.
\end{proof}
\par
In~\cite{MR1897389},
Dolgachev and Keum
also constructed automorphisms
of $X$ whose action on $\SX$ maps $\DX$
to the induced chamber adjacent to $\DX$
across each inner wall of type (c) and type (d).
Thus they obtained a set of generators of $\Aut(X)$.
See~\cite{compdataHesseEnr} for the matrix representations
of these automorphisms.
\begin{lemma}\label{lem:sigma}
Let $\sigma_{\alpha}$ and $\sigma_{\beta}$
denote the reflections of $\SX$ with respect to the roots $[E_{\alpha}]$ and $[L_{\beta}]$,
respectively.
Then $\DX^{\sigma_{\alpha}}$ and $\DX^{\sigma_{\beta}}$ are  induced chambers.
\end{lemma}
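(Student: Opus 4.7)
The plan is to mimic the argument used in the proof of Proposition~\ref{prop:DXg}. The key observation is that both $\sigma_{\alpha}$ and $\sigma_{\beta}$ are reflections with respect to roots of $\SX$ itself, and such reflections act trivially on the discriminant group. Explicitly, for any root $r\in \SX$ and any $x\in \SX\dual$, we have $\intf{x,r}\in\Z$, so
$$
s_r(x)=x+\intf{x,r}\, r\;\equiv\; x\pmod{\SX}.
$$
Therefore $\eta_{\SX}(\sigma_{\alpha})=1$ and $\eta_{\SX}(\sigma_{\beta})=1$ in $\OG(q_{\SX})$.

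Let $R$ denote the orthogonal complement of $\SX$ in $L_{26}$, and let $\sigma_{R*}\colon \OG(q_R)\isom\OG(q_{\SX})$ be the isomorphism induced by the even unimodular overlattice $L_{26}\supset \SX\oplus R$, as in the proof of Proposition~\ref{prop:DXg}. Since the identity element of $\OG(q_{\SX})$ trivially lies in the image of $\sigma_{R*}\circ\eta_R$, the criterion of Nikulin~\cite{MR525944} gives lifts $\tilde{\sigma}_{\alpha},\tilde{\sigma}_{\beta}\in \OG(L_{26})$ that preserve the primitive sublattices $\SX$ and $R$ and restrict to $\sigma_{\alpha}$, $\sigma_{\beta}$ on $\SX$. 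Because $\sigma_{\alpha}$ and $\sigma_{\beta}$ are reflections with respect to roots, they belong to $W(\SX)\subset\OG^+(\SX)$, and the lifts can be chosen in $\OG^+(L_{26})$.

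Consequently,
$$
\DX^{\sigma_{\alpha}}=i\inv\bigl(D_{26}^{\tilde{\sigma}_{\alpha}}\bigr)
\qquad\text{and}\qquad
\DX^{\sigma_{\beta}}=i\inv\bigl(D_{26}^{\tilde{\sigma}_{\beta}}\bigr),
$$
and $D_{26}^{\tilde{\sigma}_{\alpha}}$, $D_{26}^{\tilde{\sigma}_{\beta}}$ are Conway chambers. It remains only to verify that they are non-degenerate with respect to $\SX$; this is automatic, because the interior point $\hX$ of $\DX$ is mapped by $\sigma_{\alpha}$ (resp. $\sigma_{\beta}$) to an interior point $\hX^{\sigma_{\alpha}}$ (resp. $\hX^{\sigma_{\beta}}$) of $\PPP_X$, which lies in the interior of the image of $\DX$. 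Hence the intersection of $D_{26}^{\tilde{\sigma}_{\alpha}}$ (resp. $D_{26}^{\tilde{\sigma}_{\beta}}$) with $i(\PPP_X)$ contains a non-empty open subset of $i(\PPP_X)$, and the transformed chambers are induced chambers.

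No step involves any real obstacle: the whole argument reduces to showing that $\sigma_{\alpha}$ and $\sigma_{\beta}$ extend to isometries of $L_{26}$, and the trivial action on the discriminant group makes Nikulin's extension criterion apply immediately. The most delicate point to state carefully is the non-degeneracy of the resulting Conway chambers, which is guaranteed by transporting an interior point of $\DX$ by the reflection.
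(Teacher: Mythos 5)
Your proof is correct, but it takes a genuinely different route from the paper's. The paper's proof is a one-line observation: since $i\colon \SX\inj L_{26}$ is an isometric primitive embedding, the vectors $[E_{\alpha}]$ and $[L_{\beta}]$ are themselves roots of $L_{26}$, so $\sigma_{\alpha}$ and $\sigma_{\beta}$ are literally the restrictions to $\SX$ of reflections of $L_{26}$; these reflections lie in $W(L_{26})$ and fix the orthogonal complement of $\SX$ pointwise, so they send $D_{26}$ to a Conway chamber and hence $\DX$ to an induced chamber. You instead reproduce the mechanism of Proposition~\ref{prop:DXg}: you check that a reflection in a root of $\SX$ acts trivially on the discriminant form and then invoke Nikulin's gluing criterion to manufacture a lift to $\OG(L_{26})$ preserving $\SX$ and its orthogonal complement. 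Both arguments are valid; yours has the virtue of generality, since it applies to any isometry of $\SX$ acting as $\pm 1$ on $q_{\SX}$, at the cost of two extra verifications (that the lift lies in $\OG^+(L_{26})$ --- which is in fact automatic because the lift preserves $i(\PPP_X)\subset\PPP_{26}$, a cleaner justification than the one you give --- and the non-degeneracy of the image Conway chamber, which you handle correctly via the interior point $\hX^{\sigma_{\alpha}}$). The paper's observation is the sharper one here, and it is precisely the point that fails for $\barsigma_{\alpha}$ in Proposition~\ref{prop:barsigma}, where $\baru_{\alpha}$ is not a root of $L_{26}$ because $\SY$ sits inside $L_{26}$ as $\SY(2)$; note that your discriminant-form computation would also break down there, since that reflection no longer acts trivially on the discriminant group of $\SY(2)$.
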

\begin{proof}
Since $[E_{\alpha}]$ and $[L_{\beta}]$ are roots of $L_{26}$,
the reflections $\sigma_{\alpha}$ and $\sigma_{\beta}$ are the restrictions to $\SX$ of reflections of $L_{26}$.
 \end{proof}
 Combining this fact with the automorphisms of $X$ constructed in~\cite{MR1897389},  
we obtain the following:
\begin{corollary}\label{cor:SXsimpB}
The embedding $i\colon \SX\inj L_{26}$
is of simple Borcherds type.
\end{corollary}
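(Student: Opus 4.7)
The plan is to verify the sufficient criterion recalled at the end of Section~\ref{subsec:Borcherds}: to prove that $i\colon \SX \inj L_{26}$ is of simple Borcherds type, it is enough to fix one induced chamber and exhibit, for each of its walls, an element of $\OG^+(\SX)$ sending that chamber to the induced chamber adjacent across the wall. I will take the chamber $\DX = i^{-1}(D_{26})$ from Proposition~\ref{prop:DX}, whose $20+10+24+30=84$ walls have been classified in Proposition~\ref{prop:wallsDX} into types (a)--(d), and handle each type in turn.

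For the $20$ outer walls of type (a), defined by the roots $[E_\alpha]$ and $[L_\beta]$, I take the reflections $\sigma_\alpha, \sigma_\beta \in W(\SX) \subset \OG^+(\SX)$. Each such reflection swaps $\DX$ with the chamber obtained by reflecting across the corresponding hyperplane, and in particular their images share the relevant wall with $\DX$. Lemma~\ref{lem:sigma} certifies that $\DX^{\sigma_\alpha}$ and $\DX^{\sigma_\beta}$ are themselves induced chambers, so each of them coincides with the unique induced chamber adjacent to $\DX$ across the corresponding wall.

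For the $10$ inner walls of type (b), defined by the vectors $v_\alpha$ of~\eqref{eq:valpha}, Proposition~\ref{prop:iotaalpha} directly supplies the required isometry: $g_\alpha = \varphi_X(\iota_\alpha)$ lies in $\aut(X) \subset \OG^+(\SX)$ and maps $\DX$ to the induced chamber adjacent across $\DX \cap (v_\alpha)^\perp$.

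The remaining $24+30$ inner walls of types (c) and (d) are dispatched by invoking the construction recalled in the paragraph immediately preceding the corollary: Dolgachev and Keum~\cite{MR1897389} produced, for each such wall, an automorphism of $X$ whose image in $\aut(X) \subset \OG^+(\SX)$ sends $\DX$ to the induced chamber adjacent across that wall, with the matrix representations listed in~\cite{compdataHesseEnr}. The main obstacle is precisely this last step, which is an external input: one needs the explicit matrices and must check, for each such wall, both that the claimed isometry lies in $\OG^+(\SX)$ and that $v^{g} = -v$ together with positivity of $\intf{\hX^{g}, v\sprime}$ for every other primitive defining vector $v\sprime$ of $\DX$, as in the verification at the end of the proof of Proposition~\ref{prop:iotaalpha}. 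Once these finite verifications are assembled with the arguments for types (a) and (b), the sufficient criterion is met and the corollary follows.
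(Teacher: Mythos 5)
Your proposal is correct and follows essentially the same route as the paper: the corollary is obtained by combining Lemma~\ref{lem:sigma} (reflections handle the $20$ outer walls of type (a)), Proposition~\ref{prop:iotaalpha} (the involutions $g_\alpha$ handle the $10$ inner walls of type (b)), and the Dolgachev--Keum automorphisms for the walls of types (c) and (d), exactly as you describe. The verifications you flag for the type (c) and (d) isometries are the same finite computations the paper delegates to \cite{MR1897389} and the data in \cite{compdataHesseEnr}.
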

\section{Geometry of the Enriques surface $Y$}\label{sec:main}
\emph{From now on, we denote by $Y$ the quotient of the $K3$ surface $X$
 by the Enriques involution $\enrinvol$ given in Proposition~\ref{prop:enrinvol}}.
%
\subsection{Chamber $\DY$ and generators of $\aut(Y)$}\label{sec:genautY}
\begin{table}
{\small
\begin{eqnarray*}
\baru_{\{1, 2, 3 \} } & =&[0, 0, 1, 0, 1, 2, 1, 1, 0, 0 ], \\
\baru_{\{1, 2, 4 \} } & =&[0, 0, 1, 1, 1, 2, 2, 2, 1, 1 ], \\
\baru_{\{1, 2, 5 \} } & =&[0, 1, -2, -1, -2, -3, -2, -1, -1, -1 ], \\
\baru_{\{1, 3, 4 \} } & =&[0, 0, 1, 0, 0, 0, 0, 0, 0, 0 ], \\
\baru_{\{1, 3, 5 \} } & =&[0, 0, 1, 0, 1, 2, 2, 1, 1, 0 ], \\
\baru_{\{1, 4, 5 \} } & =&[1, 0, -2, -1, -3, -4, -3, -2, -1, -1 ], \\
\baru_{\{2, 3, 4 \} } & =&[0, 0, 1, 1, 2, 2, 1, 1, 1, 1 ], \\
\baru_{\{2, 3, 5 \} } & =&[0, 1, -2, -1, -2, -3, -3, -3, -2, -1 ], \\
\baru_{\{2, 4, 5 \} } & =&[1, 1, -5, -3, -6, -9, -7, -5, -3, -1 ], \\
\baru_{\{3, 4, 5 \} } & =&[1, 0, -2, -1, -2, -4, -3, -3, -2, -1 ], \mystrutd{5pt} \\
\barv_{\{1, 2, 3 \} } & =&[0, 1, -1, 0, -1, -2, -1, -1, 0, 0 ],  \mystruth{10pt} \\
\barv_{\{1, 2, 4 \} } & =&[1, 1, -4, -3, -5, -8, -7, -5, -3, -2 ], \\
\barv_{\{1, 2, 5 \} } & =&[1, 0, -1, -1, -2, -2, -2, -2, -1, 0 ], \\
\barv_{\{1, 3, 4 \} } & =&[1, 0, -1, 0, 0, 0, 0, 0, 0, 0 ], \\
\barv_{\{1, 3, 5 \} } & =&[1, 1, -4, -2, -5, -8, -7, -5, -4, -2 ], \\
\barv_{\{1, 4, 5 \} } & =&[0, 1, -1, -1, -1, -2, -1, -1, -1, 0 ], \\
\barv_{\{2, 3, 4 \} } & =&[1, 1, -4, -3, -6, -8, -6, -5, -4, -2 ], \\
\barv_{\{2, 3, 5 \} } & =&[1, 0, -1, -1, -1, -2, -1, 0, 0, 0 ], \\
\barv_{\{2, 4, 5 \} } & =&[0, 0, 2, 2, 3, 4, 3, 2, 1, 0 ], \\
\barv_{\{3, 4, 5 \} } & =&[0, 1, -1, -1, -2, -2, -2, -1, 0, 0 ]
\end{eqnarray*}
}
\vskip -.2cm
\caption{Primitive defining vectors of the walls of $\DY$}\label{table:wallsDY}
\end{table}
As in Section~\ref{subsec:appEnriques},
we identify the $\Z$-module  $S_Y$ with the $\Z$-submodule   $S_X^{+}$ of $\SX$ by $\pi^*$
so that we have
$$
\PPP_Y=(\SY\tensor\R)\cap \PPP_X
\quad\textrm{and}\quad \nefbig(Y)=\nefbig(X)\cap \PPP_Y.
$$
We have fixed a basis $\eta_1, \dots, \eta_{10}$ of $S_X^+$ in such a way that
the homomorphism~\eqref{eq:homtoetas} is an isometry $\Lten(2)\isom S_X^+$.
We use $\eta_1, \dots, \eta_{10}$ as a basis of $S_Y$,
and write elements of $S_Y\tensor\R$ as row vectors.
Hence the Gram matrix of $\SY$
is equal to the standard Gram matrix of $\Lten=U\oplus E_8$.
\par
Recall that $\SX$ is embedded in $L_{26}$ by the matrix in Table~\ref{table:embSXL}.
Let
$$
\pr_S^+ \dcolon  L_{26}\tensor\R\maprightsp{\pr_S\;}   \SX\tensor \R  \maprightsp{\pr^+}  \SY\tensor \R
$$
denote the composite  of the orthogonal projections
$\pr_S$
and
$\pr^+$.
We consider the tessellation of $\PPP_Y$ given by the locally finite family of hyperplanes~\eqref{eq:prSplushamily}.
We put
$$
\DY:=\PPP_Y\cap \DX.
$$
By Proposition~\ref{prop:autSX}, 
we have $\hX\in \SY$ and hence $\hX\in \DY$.
We put
$$
\hY:=\hX.
$$
Note that 
the class $\hY\in \SY$ is ample on $Y$,  and that $\intfY{\hY, \hY}=10$.
With respect to the basis $\eta_1, \dots, \eta_{10}$ of $\SY$,
we  have
$$
\hY=[ 3, 3, -8, -5, -10, -15, -12, -9, -6, -3 ].
$$
%
%
%
\begin{proposition}
The closed subset $\DY$
of $\PPP_Y$ is an induced chamber
contained in $\nefbig(Y)$.
The set of primitive defining  vectors of walls of $\DY$
consists of $10+10$ vectors
$$
\baru_{\alpha}:=2\,\pr^+([E_{\alpha}])=2\,\pr^+([L_{\bar\alpha}]), \;\;\;\; \barv_{\alpha}:=2\,\pr^+(v_{\alpha})
$$
of $S_Y\dual$,
where $\alpha$ runs through $A$,
and $v_{\alpha}\in \SX\dual$ is the primitive defining vector of an inner wall  of $\DX$ of type {\rm (b)},
which is characterized by~\eqref{eq:valpha}.
The vector representations of $\baru_{\alpha}$ and $\barv_{\alpha}$ are given in
Table~\ref{table:wallsDY}.
\end{proposition}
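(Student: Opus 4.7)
The plan is to apply the general framework of Section~\ref{subsec:appEnriques} to the composed primitive embedding $\SY(2) \cong \SX^+ \subset \SX \inj L_{26}$, reducing the proposition to a finite verification that uses the explicit data assembled in Section~\ref{sec:QH}.

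First, I would verify that $\DY$ is an induced chamber contained in $\nefbig(Y)$. The identity~\eqref{eq:hXsum} writes $\hX$ as a sum over $A \cup B$ that is invariant under the switch $\alpha \leftrightarrow \bar\alpha$, so $\hX \in \SX^+ = \SY$, and we may set $\hY := \hX$. By Proposition~\ref{prop:DX}, $\hX$ is an interior point of $\DX$ and $\DX \subset \nefbig(X)$. Hence $\hY$ is an interior point of $\DY = \DX \cap \PPP_Y$ viewed inside $\PPP_Y$, and $\DY \subset \nefbig(X) \cap \PPP_Y = \nefbig(Y)$. By the discussion in Section~\ref{subsec:appEnriques}, since $\DY$ contains a non-empty open subset of $\PPP_Y$, it is an induced chamber, defined by $\pr^+(\FFF(\DX)) \setminus \{0\}$, where $\FFF(\DX)$ is the set of $84$ primitive defining vectors of $\DX$ classified in Proposition~\ref{prop:wallsDX}.

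Next, I would compute these projections. Since $g_\enrinvol \in \aut(\DX)$ by Proposition~\ref{prop:enrinvol}, it permutes $\FFF(\DX)$, and for every $v$ one has $\pr^+(v) = (v + v^{g_\enrinvol})/2$. Among the type~(a) vectors, $g_\enrinvol$ pairs $[E_\alpha]$ with $[L_{\bar\alpha}]$, so the $20$ vectors fall into $10$ orbits whose averages, rescaled by the factor $2$ that converts between the forms on $\SX^+$ and $\SY$ via $\pi^* \colon \SY(2) \isom \SX^+$, give $\baru_\alpha = [E_\alpha] + [L_{\bar\alpha}]$. Among the type~(b) vectors, each $v_\alpha$ is $g_\enrinvol$-fixed by~\eqref{eq:valphagepsilon}, so projection is trivial and yields $\barv_\alpha = 2 v_\alpha$. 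The $24 + 30$ vectors of types~(c) and~(d) fall into at most $12 + 15$ orbits under $g_\enrinvol$, producing a further set of at most $27$ candidate defining vectors.

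Finally, I would show that these $27$ candidate projections from types~(c) and~(d) are all redundant, so that the walls of $\DY$ are given by precisely the $10 + 10$ vectors $\baru_\alpha$ and $\barv_\alpha$, and I would check primitivity in $\SY^\vee = \SY = \Lten$ of each. Both are finite computations: for redundancy, I apply Algorithm~\ref{algo:wall} to one linear-programming problem per candidate in $\SY \otimes \Q$; for primitivity, I work in the basis $\eta_1, \ldots, \eta_{10}$ of Table~\ref{table:SXplus}. The vector presentations filling Table~\ref{table:wallsDY} then read off directly from the embedding of Table~\ref{table:embSXL} and the matrix of Table~\ref{table:enrinvol}. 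The main obstacle is the redundancy claim for types (c) and (d): since $\aut(\DX) \cap \aut(X) = \{\mathrm{id}, g_\enrinvol\}$ by Proposition~\ref{prop:enrinvol}, no larger symmetry is available to force these projections to be redundant, and the collapse of all $27$ candidates into redundant vectors appears to be a numerical phenomenon specific to the geometry of $\barX$, verifiable only through the explicit linear-programming check.
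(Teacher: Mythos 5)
Your proposal is correct and follows essentially the same route as the paper: show that $\hY=\hX$ is an interior point of $\DY=\PPP_Y\cap\DX$ so that $\DY$ is an induced chamber contained in $\nefbig(Y)=\PPP_Y\cap\nefbig(X)$, observe that $\DY$ is defined by the projections $\pr^+(v)$ of the $84$ primitive defining vectors of $\DX$, and then determine the actual walls by the linear-programming check of Algorithm~\ref{algo:wall}. Your extra bookkeeping of how $g_{\enrinvol}$ acts on the four types of walls (pairing type (a), fixing type (b)) is a useful explication of what the paper leaves implicit, and your closing remark is accurate: the elimination of the type (c) and (d) candidates is a finite computation, exactly as in the paper.
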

\begin{proof}
Since $\hX$ is an interior point of $\DX $ in $S_X\tensor \R$,
the vector $\hY=\hX\in \DY$ is an interior point of  $\DY$ in $S_Y\tensor \R$.
Hence $\DY$ is a chamber.
Since $\DX $ is an induced chamber contained in $\nefbig(X)$,
the chamber $\DY$ is an induced chamber contained in $\nefbig(Y)=\PPP_Y\cap \nefbig(X)$.
By definition, the chamber  $\DY$ is defined by the set of vectors
$$
\set{\pr^+(v)}{v\in \FFF(\DX ), \;\;\;\intfY{\pr^+(v), \pr^+(v)}<0},
$$
where $\FFF(\DX )$ is the set of primitive defining vectors of  walls of $\DX $,
which we have calculated in the proof of Proposition~\ref{prop:wallsDX}. 
Using Algorithm~\ref{algo:wall}, we obtain the  set of primitive defining vectors of walls of $\DY$
as Table~\ref{table:wallsDY}.
\end{proof}
%
%
%
\begin{remark}
If $v\in \FFF(\DX)$ defines a wall of $\DX$ of type (c) or (d),
then we have $\intfY{\pr^+(v), \pr^+(v)}=0$,
and hence the hyperplane $(v)\sperp$ of $\PPP_X$
does not intersect $\PPP_Y\subset \PPP_X$.
\end{remark}
\begin{corollary}\label{cor:outerwallsDY}
The vector $\baru_{\alpha}$ is the class of the smooth rational curve $\pi(E_{\alpha})=\pi(L_{\bar\alpha})$
on $Y$.
In particular,
each  of the walls $\DY\cap (\baru_{\alpha})\sperp$ is a wall of $\nefbig(Y)$.
\end{corollary}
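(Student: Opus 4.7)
The plan is to identify $\baru_\alpha$ as the class in $\SY$ of the smooth rational curve $C_\alpha := \pi(E_\alpha) = \pi(L_{\bar\alpha})$ on $Y$; both assertions of the corollary then follow almost immediately. First I would invoke Proposition~\ref{prop:enrinvol}, according to which the Enriques involution $\enrinvol$ (which is fixed-point free on $X$) interchanges $E_\alpha$ with $L_{\bar\alpha}$. It follows that $C_\alpha$ is indeed a smooth rational curve on $Y$ and that its pullback satisfies $\pi^*[C_\alpha] = [E_\alpha] + [L_{\bar\alpha}] \in \SX^+$.

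The next step is to match this with $\baru_\alpha = 2\,\pr^+([E_\alpha])$. Since the orthogonal projection $\pr^+\colon \SX \to (\SX^+)_\Q$ is given on $\SX$ by the formula $\pr^+(v) = (v + v^{g_\enrinvol})/2$, and $g_\enrinvol$ swaps $[E_\alpha]$ with $[L_{\bar\alpha}]$, one obtains $2\,\pr^+([E_\alpha]) = [E_\alpha] + [L_{\bar\alpha}] = \pi^*[C_\alpha]$. Transporting this equality across the isometry $\pi^*\colon \SY(2) \isom \SX^+$ of Section~\ref{subsec:appEnriques} gives $\baru_\alpha = [C_\alpha]$ in $\SY$, which is the first assertion. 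As a sanity check, $\intfY{\baru_\alpha, \baru_\alpha} = -2$, obtained by dividing $\intfX{[E_\alpha] + [L_{\bar\alpha}], [E_\alpha] + [L_{\bar\alpha}]} = -4$ by $2$; the cross term vanishes because $\alpha\cap\bar\alpha=\emptyset$, so $\alpha\not\supset\bar\alpha$ and~\eqref{eq:EL} forces $\intf{[E_\alpha],[L_{\bar\alpha}]}=0$.

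For the second assertion, since $\baru_\alpha\in\SY$ is the class of an actual curve on $Y$, the defining condition of $\nefbig(Y)$ yields $\nefbig(Y)\subset H_{\baru_\alpha}$, so $(\baru_\alpha)\sperp$ is a bounding hyperplane of $\nefbig(Y)$. Combined with $\DY \subset \nefbig(Y)$ and the fact that $\DY \cap (\baru_\alpha)\sperp$ is a codimension-one face of $\DY$, this means $\DY\cap(\baru_\alpha)\sperp$ is an outer wall, i.e., a wall of $\nefbig(Y)$ in the sense of Section~\ref{sec:QH}. There is no real obstacle here; the only minor subtlety worth stating carefully is the factor of $2$ between $\intfvoid_X$ and $\intfvoid_Y$ that is built into the identification $\pi^*\colon \SY(2) \isom \SX^+$, which is the reason $\baru_\alpha$ is defined with the coefficient $2$ rather than $1$.
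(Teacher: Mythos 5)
Your proposal is correct and follows exactly the argument the paper leaves implicit when it states this as a corollary of the preceding proposition: since $g_{\enrinvol}$ swaps $[E_\alpha]$ and $[L_{\bar\alpha}]$, one has $\baru_\alpha = 2\,\pr^+([E_\alpha]) = [E_\alpha]+[L_{\bar\alpha}] = \pi^*[\pi(E_\alpha)]$, which is the class of a smooth rational curve on $Y$, so the corresponding hyperplane bounds $\nefbig(Y)$. Your sanity checks (the vanishing of $\intf{[E_\alpha],[L_{\bar\alpha}]}$ via~\eqref{eq:EL} and the factor of $2$ in $\pi^*\colon \SY(2)\isom \SX^+$) are exactly the right points to verify.
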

The set of primitive defining vectors of walls of $\DY$ is denoted by
$$
\FFF(\DY)\;\;:=\;\;\shortset{\baru_{\alpha}}{\alpha \in A}\;\cup\;\shortset{\barv_{\alpha}}{\alpha\in A}.
$$
We have
\begin{equation}\label{eq:intnumbsuuww}
\intfY{\baru_{\alpha}, \baru_{\alpha\sprime}}=
\begin{cases}
-2 &  \textrm{if $\alpha=\alpha\sprime$,}\\
1 &  \textrm{if $|\alpha\cap\alpha\sprime|=1$,}\\
0 & \textrm{otherwise,}
\end{cases}
\quad
\intfY{\barv_{\alpha}, \barv_{\alpha\sprime}}=
\begin{cases}
-2 &  \textrm{if $\alpha=\alpha\sprime$,}\\
1 &  \textrm{if $|\alpha\cap\alpha\sprime|=2$,}\\
0 & \textrm{otherwise,}
\end{cases}
\end{equation}
and
\begin{equation}\label{eq:intnumbsuw}
\intfY{\baru_{\alpha}, \barv_{\alpha\sprime}}=2\,\delta_{\alpha\alpha\sprime}.
\end{equation}
Moreover, we have
\begin{equation*}\label{eq:intnumbsuwhY}
\intfY{\baru_{\alpha}, \hY}=1,\quad \intfY{\barv_\alpha, \hY}=2.
\end{equation*}
\par
%
%
%
\begin{proposition}\label{prop:DYg}
If $\barg\in \aut(Y)$, then $\DY^{\barg}$ is an induced chamber.
\end{proposition}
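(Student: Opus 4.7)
The strategy is to reduce to Proposition~\ref{prop:DXg} by lifting $\bar g$ from $\OG^+(\SY)$ to an automorphism of $\SX$ that commutes with $g_{\enrinvol}$, and then to the already-constructed isometry of $L_{26}$. The point is that an induced chamber in $\PPP_Y$ is precisely the intersection with $\PPP_Y$ of an induced chamber in $\PPP_X$, so once we can transport $\DX$ by a compatible isometry of $L_{26}$, the statement for $\DY$ follows automatically.

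Concretely, the plan is as follows. First, use Proposition~\ref{prop:Zaut}, which applies because $\varphi_X\colon \Aut(X)\to\aut(X)$ is an isomorphism by Proposition~\ref{prop:period}\,(2): the surjection $\zeta\colon Z_{\aut(X)}(g_\enrinvol)/\gen{g_\enrinvol}\surj \aut(Y)$ allows us to choose $g\in Z_{\aut(X)}(g_\enrinvol)\subset \aut(X)$ whose restriction $g|_{\SY}$ equals $\bar g$. Next, apply the proof of Proposition~\ref{prop:DXg} to this $g$: using Proposition~\ref{prop:period}\,(1) we have $\eta_{\SX}(g)=\pm 1$, and the unimodular gluing $L_{26}\supset \SX\oplus R$ (with $R=\SX^\perp$) produces an isometry $\tilde g\in \OG(L_{26})$ preserving both $\SX$ and $R$ whose restriction to $\SX$ is $g$. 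Then $\DX^g = i^{-1}(D_{26}^{\tilde g})\cap \PPP_X$ is an induced chamber.

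The key additional observation is that $\tilde g$ automatically preserves the sublattice $\SY=\SX^+\subset \SX\subset L_{26}$: since $g$ commutes with $g_\enrinvol$ on $\SX$, it preserves the $(+1)$-eigenlattice $\SX^+$, so $\tilde g(\SY)=\SY$ and $\tilde g|_{\SY}=\bar g$. Using this, compute
\[
\DY^{\bar g} \;=\; (\DX\cap \PPP_Y)^{\bar g}\;=\;\DX^{g}\cap \PPP_Y\;=\; i^{-1}\bigl(D_{26}^{\tilde g}\bigr)\cap \PPP_Y,
\]
where the middle equality uses that $g|_{\SY}=\bar g$ and that $g$ preserves $\PPP_Y$. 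Thus $\DY^{\bar g}$ is obtained from the Conway chamber $D_{26}^{\tilde g}$ via the composite embedding $\SY(2)\inj L_{26}$, so it only remains to verify non-degeneracy of $D_{26}^{\tilde g}$ with respect to this embedding.

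Non-degeneracy is essentially free: because $\DY$ contains a non-empty open subset of $\PPP_Y$ (it is a chamber, indeed it contains $\hY$ in its $\SY\otimes\R$-interior), its image $\DY^{\bar g}$ under the isometry $\bar g\in\OG^+(\SY)$ also contains a non-empty open subset of $\PPP_Y$. Hence $D_{26}^{\tilde g}$ is non-degenerate with respect to $\SY(2)\inj L_{26}$, and $\DY^{\bar g}$ is by definition an induced chamber in $\PPP_Y$. The only subtle point in this plan is keeping track of the two embeddings $\SY(2)\cong \SX^+\inj \SX \inj L_{26}$ and checking that the lift $\tilde g$ furnished by the proof of Proposition~\ref{prop:DXg} is simultaneously a lift compatible with the embedding $\SY(2)\inj L_{26}$; the commutation $g\, g_\enrinvol = g_\enrinvol\, g$ is precisely what makes this compatibility automatic.
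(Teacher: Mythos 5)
Your proposal is correct and follows essentially the same route as the paper: lift $\barg$ to $g\in Z_{\aut(X)}(g_{\enrinvol})$ via Propositions~\ref{prop:Zaut} and~\ref{prop:period}, observe that $\DY^{\barg}=\PPP_Y\cap\DX^{g}$, and conclude from Proposition~\ref{prop:DXg}. The extra details you supply (the explicit lift $\tilde g$ to $\OG(L_{26})$ and the non-degeneracy check) are just an unpacking of what the paper leaves implicit in citing Proposition~\ref{prop:DXg} and the description of induced chambers in $\PPP_Y$ from Section~\ref{subsec:appEnriques}.
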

\begin{proof}
By Propositions~\ref{prop:Zaut} and~\ref{prop:period},
we have an element $g\in Z_{\aut(X)}(g_{\enrinvol})$ such that
 $g|_{\SY}=\barg$.
Since $\DY^{\barg}=\PPP_Y\cap \DX ^{g}$,
the statement  follows from Proposition~\ref{prop:DXg}.
\end{proof}
\begin{proposition}\label{prop:autDY}
The automorphism group $\aut(\DY )$ of the chamber $\DY$ is
isomorphic to the symmetric group  of degree $5$.
Every element $\barg$ of $\aut(\DY)$ satisfies $\hY^{\barg}=\hY$.
The intersection  $\aut(\DY)\cap \aut(Y)$ is trivial.
\end{proposition}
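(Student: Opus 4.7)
The plan is to first check that $\aut(\DY)$ acts faithfully on the set $\FFF(\DY) = \{\baru_\alpha\}_{\alpha\in A} \cup \{\barv_\alpha\}_{\alpha\in A}$ of $20$ primitive defining vectors listed in Table~\ref{table:wallsDY}. From~\eqref{eq:intnumbsuuww}, the Gram matrix of the ten vectors $\baru_\alpha$ equals $-2I_{10} + A$, where $A$ is the adjacency matrix of the Kneser graph on $3$-subsets of $\{1,\dots,5\}$, namely the Petersen graph. The eigenvalues of $A$ are $3,1,-2$, so this Gram matrix is non-singular and the $\baru_\alpha$ span $\SY\tensor\Q$. Hence any $\barg\in\aut(\DY)$ is determined by the permutation it induces on $\FFF(\DY)$.

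Next I would obtain the upper bound $\aut(\DY) \hookrightarrow \SSSS_5$ combinatorially. By~\eqref{eq:intnumbsuuww}, the vectors $\baru_\alpha$ form a Petersen graph (edges when $|\alpha\cap\alpha'|=1$, valency $3$), while the $\barv_\alpha$ form the complementary triangular graph $T(5)$ (edges when $|\alpha\cap\alpha'|=2$, valency $6$); by~\eqref{eq:intnumbsuw} the map $\baru_\alpha \leftrightarrow \barv_\alpha$ is the unique perfect matching whose intersections are $2$. Since the two graphs have different valencies, no combinatorial automorphism can exchange $\baru$'s with $\barv$'s, so any permutation in $\aut(\DY)$ restricts to a graph automorphism of the Petersen graph, whose group is classically $\SSSS_5$. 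For the matching lower bound, I would use the subgroup $\SSSS_5 \subset \aut(\DX)$ from the proof of Proposition~\ref{prop:autSX}, consisting of the isometries induced by permutations of $\{1,\dots,5\}$. These commute with the complement involution $\alpha\leftrightarrow\bar\alpha$ and hence with $g_\enrinvol$; they therefore preserve $\SY = \SX^+$ and act on $\DY = \PPP_Y\cap\DX$. Since this action permutes the $\baru_\alpha$ by the $\SSSS_5$-action on $A$, it is faithful on $\SY$, yielding the desired embedding $\SSSS_5\hookrightarrow \aut(\DY)$.

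Combining the two bounds gives $\aut(\DY)\cong\SSSS_5$. For the invariance of $\hY$, I would use the identity $\hY = \sum_{\alpha\in A}\baru_\alpha$, which follows from~\eqref{eq:hXsum}, the equality $\baru_\alpha = [E_{\alpha}] + [L_{\bar\alpha}]$, and $\hY=\hX$; this sum is manifestly preserved by any permutation of the $\baru_\alpha$, so $\hY^{\barg} = \hY$ for all $\barg\in\aut(\DY)$.

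For the final claim, suppose $\barg\in\aut(\DY)\cap\aut(Y)$. By Proposition~\ref{prop:Zaut}, lift $\barg$ to some $g\in Z_{\aut(X)}(g_\enrinvol)$ with $g|_{\SY}=\barg$. Since $\barg$ fixes $\hY = \hX$ and $g$ extends $\barg$, we have $g(\hX) = \hX$. Now $\hX$ is an interior point of $\DX$ by Proposition~\ref{prop:DX}, and $\DX^g$ is again an induced chamber by Proposition~\ref{prop:DXg}, with $\hX$ still in its interior; since $\PPP_X$ is tessellated by induced chambers, this forces $\DX^g = \DX$. Hence $g\in\aut(\DX)\cap\aut(X) = \{1,g_\enrinvol\}$ by Proposition~\ref{prop:enrinvol}, and both elements act trivially on $\SY$, so $\barg$ is trivial. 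The main obstacle will be the clean graph-theoretic identification of the symmetry group of $\FFF(\DY)$ with $\SSSS_5$ (ruling out exotic identifications between $\baru$'s and $\barv$'s); the other steps reduce to tracking induced chambers and invoking the already-established Propositions~\ref{prop:autSX}, \ref{prop:enrinvol}, and~\ref{prop:DXg}.
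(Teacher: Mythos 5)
Your proposal is correct and follows essentially the same route as the paper: distinguish the $\baru_\alpha$ from the $\barv_\alpha$ by the number of vectors pairing to $1$ with them, reduce to the automorphism group of the Petersen graph to get $\aut(\DY)\cong\SSSS_5$, use $\hY=\sum_\alpha\baru_\alpha$ for the second claim, and for the third lift $\barg$ to $g\in Z_{\aut(X)}(g_\enrinvol)$ fixing $\hX$, conclude $\DX^g=\DX$, and invoke $\aut(\DX)\cap\aut(X)=\{\id,g_\enrinvol\}$. Your added verifications (nonsingularity of the Gram matrix of the $\baru_\alpha$ for faithfulness, and descending the $\SSSS_5\subset\aut(\DX)$ to realize the lower bound) only make explicit steps the paper leaves as "easily checked."
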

\begin{proof}
Let $\barg$ be an element of $\aut(\DY)$.
Then $\barg$ induces a permutation of
$\FFF(\DY)$
that preserves~\eqref{eq:intnumbsuuww} and~\eqref{eq:intnumbsuw}.
Note that a vector $v\in \FFF(\DY)$ belongs
to $\shortset{\baru_{\alpha}}{\alpha \in A}$ (resp.~to $\shortset{\barv_{\alpha}}{\alpha\in A}$)
if  there exist exactly three (resp.~six) vectors $v\sprime\in \FFF(\DY)$
such that $\intfY{v, v\sprime}=1$.
Hence the permutation of $\FFF(\DY)$ induced by $\barg$
induces a permutation of $A$
preserving the set of pairs $\{\alpha, \alpha\sprime\}$ satisfying  $|\alpha\cap\alpha\sprime|=1$.
Therefore this permutation comes from a permutation of $\{1, \dots, 5\}$.
Conversely, it can be easily checked that
 a permutation of $\{1, \dots, 5\}$ induces an isometry of $S_Y$.
Hence the first assertion is proved.
From~\eqref{eq:hXsum}, we have
$\hY=\sum_{\alpha\in A} \baru_{\alpha}$.
Hence the second assertion follows.
Suppose that $\barg\in \aut(\DY)$ belongs to $\aut(Y)$.
By Propositions~\ref{prop:Zaut} and~\ref{prop:period},
there exists an element $g\in Z_{\aut(X)}(g_{\enrinvol})$
such that $g|_{\SY}=\barg$.
Since $\hX=\hY$ and $\hY^{\barg}=\hY$,
we have $\hX^g=\hX$.
Since $\DX^g\cap \DX$ contains an interior point $\hX$ of $\DX$,
we see that $\DX^g=\DX$.
By Proposition~\ref{prop:enrinvol},
it follows that $g\in \{\id, g_{\enrinvol}\}$.
Therefore $\barg$ is the identity.
\end{proof}
\par
The involution $\iota_{\alpha}\colon X\to X$
in Proposition~\ref{prop:iotaalpha}
commutes with $\enrinvol$.
Hence $\iota_{\alpha}$ induces an involution $j_{\alpha}\colon Y\to Y$
of $Y$,
whose  representation on $S_Y$ is
$$
\barg_{\alpha}:=g_{\alpha}|_{S_Y},
$$
where $g_{\alpha}\in \aut(X)$ is the representation of $\iota_{\alpha}$
calculated in Proposition~\ref{prop:iotaalpha}.
We calculate the matrix representations of $\barg_{\alpha}$.
(See the webpage~\cite{compdataHesseEnr} for
these matrices.)
\begin{proposition}\label{prop:autYgens}
The element  $\barg_{\alpha} \in \aut(Y)$ maps $\DY$ to the induced chamber
adjacent to $\DY$ across the wall $\DY\cap (\barv_{\alpha})\sperp$.
\end{proposition}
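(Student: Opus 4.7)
The plan is to deduce this from its analog for $X$ (Proposition~\ref{prop:iotaalpha}) by restricting the chamber geometry on $\PPP_X$ to $\PPP_Y = (\SY \tensor \R) \cap \PPP_X$. The key auxiliary fact is that $\iota_{\alpha}$ commutes with $\enrinvol$, so $g_{\alpha}$ preserves $\SX^+ = \SY$ and hence $\PPP_Y$; combined with $\DY = \PPP_Y \cap \DX$ this immediately gives
$$
\DY^{\barg_{\alpha}} \;=\; (\PPP_Y \cap \DX)^{g_{\alpha}} \;=\; \PPP_Y \cap \DX^{g_{\alpha}}.
$$

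Next, by Proposition~\ref{prop:iotaalpha}, $\DX^{g_{\alpha}}$ is the induced chamber of $\PPP_X$ adjacent to $\DX$ across $\DX \cap (v_{\alpha})\sperp$; in particular $v_{\alpha}^{g_{\alpha}} = -v_{\alpha}$, and interior points of $\DX$ and $\DX^{g_{\alpha}}$ pair with $v_{\alpha}$ with opposite signs. To transport this picture to $\PPP_Y$, I would use~\eqref{eq:valphagepsilon} to observe that $v_{\alpha}$ is fixed by $g_{\enrinvol}$ and therefore already lies in $\SY \tensor \Q$, so $\pr^+(v_{\alpha}) = v_{\alpha}$ and the intersection $(v_{\alpha})\sperp \cap \PPP_Y$ is precisely the hyperplane $(\barv_{\alpha})\sperp$ of $\PPP_Y$ defined by $\barv_{\alpha} = 2\,\pr^+(v_{\alpha})$ (the factor of $2$ coming from the isometry $\pi^*\colon \SY(2) \isom \SX^+$). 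That this intersection really is a codimension-one wall of $\DY$ is already recorded in Table~\ref{table:wallsDY}.

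Combining these two ingredients,
$$
\DY^{\barg_{\alpha}} \cap (\barv_{\alpha})\sperp \;=\; \PPP_Y \cap \DX^{g_{\alpha}} \cap (v_{\alpha})\sperp \;=\; \PPP_Y \cap \DX \cap (v_{\alpha})\sperp \;=\; \DY \cap (\barv_{\alpha})\sperp,
$$
and the sign analysis above places $\DY^{\barg_{\alpha}}$ in the half-space of $(\barv_{\alpha})\sperp$ opposite to $\DY$. By Proposition~\ref{prop:DYg}, $\DY^{\barg_{\alpha}}$ is itself an induced chamber of $\PPP_Y$, so the uniqueness of the induced chamber on the far side of a given wall (Definition~\ref{def:adjacent}) forces $\DY^{\barg_{\alpha}}$ to be the induced chamber adjacent to $\DY$ across the wall $\DY \cap (\barv_{\alpha})\sperp$.

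The step that demands the most care is the hyperplane identification in the middle paragraph: one must verify that $v_{\alpha}$ is $g_{\enrinvol}$-invariant (so that $(v_{\alpha})\sperp$ cuts $\PPP_Y$ in a hyperplane rather than containing it), and that the primitive defining vector in $\SY\dual$ of the resulting hyperplane is exactly $\barv_{\alpha}$. Once this bookkeeping with $\pr^+$ and the factor of $2$ coming from $\pi^*\colon \SY(2) \isom \SX^+$ is cleanly in place, the remaining arguments are formal consequences of Propositions~\ref{prop:iotaalpha} and~\ref{prop:DYg}.
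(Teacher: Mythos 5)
Your argument is correct, but it certifies the adjacency by a different mechanism than the paper. Both proofs begin identically: Proposition~\ref{prop:DYg} gives that $\DY^{\barg_{\alpha}}$ is an induced chamber, and the facts $v_{\alpha}\in\SX^+\tensor\R$ and $v_{\alpha}^{g_{\alpha}}=-v_{\alpha}$ give $\barv_{\alpha}^{\barg_{\alpha}}=-\barv_{\alpha}$, so that $-\barv_{\alpha}$ defines a wall of $\DY^{\barg_{\alpha}}$. The paper then finishes with a direct numerical check at the level of $Y$: it verifies that $\intf{\hY^{\barg_{\alpha}}, \barv}>0$ for every $\barv\in\FFF(\DY)\setminus\{\barv_{\alpha}\}$, which places the interior point $\hY^{\barg_{\alpha}}$ of $\DY^{\barg_{\alpha}}$ in the chamber reached from $\DY$ by crossing only the wall $\DY\cap(\barv_{\alpha})\sperp$. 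You instead descend the adjacency already proved in $\PPP_X$ (Proposition~\ref{prop:iotaalpha}) to $\PPP_Y$ via $\DY^{\barg_{\alpha}}=\PPP_Y\cap\DX^{g_{\alpha}}$ and the identification $(v_{\alpha})\sperp\cap\PPP_Y=(\barv_{\alpha})\sperp$, which is legitimate precisely because $v_{\alpha}$ is $g_{\enrinvol}$-invariant (so it lies in $\SY\tensor\Q$) and because $\DY\cap(\barv_{\alpha})\sperp$ is already known to be a genuine wall; uniqueness of the adjacent induced chamber then closes the argument. Your route avoids the extra computation with $\hY^{\barg_{\alpha}}$ but leans on the full strength of Proposition~\ref{prop:iotaalpha}, including the shared-wall property built into Definition~\ref{def:adjacent}, while the paper's route is self-contained at the level of $\SY$ and follows the same verification scheme it uses for the analogous statement about $\DX$.
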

\begin{proof}
By Proposition~\ref{prop:DYg},
we know that $\DY^{\barg_{\alpha}}$ is an induced chamber.
It is easy to check that $v_{\alpha}\in \SX^+\tensor\R$ and $v_{\alpha}^{g_{\alpha}}=-v_{\alpha}$,
and hence
 $\barv_{\alpha}^{\barg_{\alpha}}=-\barv_{\alpha}$.
 Therefore
the vector $-\barv_{\alpha}$ defines a wall of $\DY^{\barg_{\alpha}}$.
Since
$\intf{\hY^{\barg_{\alpha}}, \barv}>0$ holds for all vectors  $\barv$ of  $\FFF(\DY)\setminus\{\barv_{\alpha}\}$,
we see that $\DY^{\barg_{\alpha}}$ is adjacent to $\DY$ across the wall $\DY\cap (\barv_{\alpha})\sperp$.
\end{proof}
%
%
\begin{remark}
Each involution $\barg_{\alpha}$ has an eigenvalue $1$ with multiplicity $6$.
\end{remark}
\begin{proposition}\label{prop:barsigma}
For $\alpha\in A$,
let $\barsigma_{\alpha}\in \OG(\SY)$ denote the reflection of $\SY$ with respect to the root $\baru_{\alpha}$.
Then
$\barsigma_{\alpha}$ maps $\DY$ to the induced chamber
adjacent to $\DY$ across the wall $\DY\cap (\baru_{\alpha})\sperp$.
\end{proposition}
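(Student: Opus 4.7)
The plan is to realize $\barsigma_{\alpha}$ as the restriction to $\SY=\SX^+$ of a product of two reflections in $L_{26}$, and then invoke the argument of Lemma~\ref{lem:sigma} together with Proposition~\ref{prop:DYg}-style reasoning on the level of $X$.

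First I would note that, under the inclusion $\SY\subset\SX$, the vector $\baru_{\alpha}=2\,\pr^+([E_{\alpha}])$ is simply $[E_{\alpha}]+[L_{\bar{\alpha}}]\in \SX$. Let $\sigma_{\alpha},\sigma_{\bar{\alpha}}\in W(\SX)$ be the reflections with respect to the orthogonal roots $[E_{\alpha}]$ and $[L_{\bar{\alpha}}]$ (orthogonality follows from~\eqref{eq:EL} since $\alpha\cap\bar{\alpha}=\emptyset$). Because $g_{\enrinvol}$ swaps these two roots (Proposition~\ref{prop:enrinvol}), conjugation by $g_{\enrinvol}$ interchanges $\sigma_{\alpha}$ and $\sigma_{\bar{\alpha}}$, so the product $\tau_{\alpha}:=\sigma_{\alpha}\sigma_{\bar{\alpha}}=\sigma_{\bar{\alpha}}\sigma_{\alpha}$ commutes with $g_{\enrinvol}$ and hence preserves $\SX^+=\SY$.

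Next I would identify $\tau_{\alpha}|_{\SY}$ with $\barsigma_{\alpha}$. A direct computation gives $\tau_{\alpha}(\baru_{\alpha})=-\baru_{\alpha}$. On the other hand, if $y\in \SY$ satisfies $\intfY{y,\baru_{\alpha}}=0$, then $\intfX{y,[E_{\alpha}]}+\intfX{y,[L_{\bar{\alpha}}]}=0$; combined with $y^{g_{\enrinvol}}=y$ and $[E_{\alpha}]^{g_{\enrinvol}}=[L_{\bar{\alpha}}]$, this forces $\intfX{y,[E_{\alpha}]}=\intfX{y,[L_{\bar{\alpha}}]}=0$, so $\tau_{\alpha}$ fixes $y$. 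Thus $\tau_{\alpha}|_{\SY}$ acts as $-1$ on $\baru_{\alpha}$ and as $+1$ on its orthogonal complement in $\SY$, i.e.\ $\tau_{\alpha}|_{\SY}=\barsigma_{\alpha}$.

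Now, by the proof of Lemma~\ref{lem:sigma} the reflections $\sigma_{\alpha}$ and $\sigma_{\bar{\alpha}}$ extend to reflections of $L_{26}$, so $\tau_{\alpha}$ is the restriction to $\SX$ of an element of $\OG^+(L_{26})$. Applying $\tau_{\alpha}$ sends the Conway chamber $D_{26}$ to another Conway chamber, hence $\DX^{\tau_{\alpha}}=i\inv(D_{26}^{\tilde\tau_{\alpha}})$ is again an induced chamber in $\PPP_X$. Since $\tau_{\alpha}$ preserves $\PPP_Y\subset \PPP_X$ and restricts to $\barsigma_{\alpha}$ on $\SY$, we obtain
$$
\DY^{\barsigma_{\alpha}}=\PPP_Y\cap \DX^{\tau_{\alpha}},
$$
which is therefore an induced chamber in $\PPP_Y$. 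Finally, since $\barsigma_{\alpha}$ fixes the hyperplane $(\baru_{\alpha})\sperp$ pointwise and swaps the two half-spaces across it, and since $\hY\in \DY$ satisfies $\intfY{\hY,\baru_{\alpha}}=1>0$ while $\intfY{\hY^{\barsigma_{\alpha}},\baru_{\alpha}}=-1<0$, the chamber $\DY^{\barsigma_{\alpha}}$ is adjacent to $\DY$ across the wall $\DY\cap (\baru_{\alpha})\sperp$.

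I do not expect a genuine obstacle: the only thing to be careful about is verifying that $\tau_{\alpha}|_{\SY}=\barsigma_{\alpha}$, which reduces to the orthogonality calculation in the second paragraph. Everything else is a direct transfer from the $K3$ picture via Lemma~\ref{lem:sigma}.
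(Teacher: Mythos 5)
Your proposal is correct and follows essentially the same route as the paper: realize $\barsigma_{\alpha}$ as the restriction to $\SY$ of $\sigma_{\alpha}\sigma_{\bar\alpha}$, use Lemma~\ref{lem:sigma} to see that $\DX^{\sigma_{\alpha}\sigma_{\bar\alpha}}$ is an induced chamber, and intersect with $\PPP_Y$. The only difference is that where the paper asserts the commutation with $g_{\enrinvol}$ and the identity $\sigma_{\alpha}\sigma_{\bar\alpha}|_{\SY}=\barsigma_{\alpha}$ ``by direct calculation,'' you supply a clean conceptual verification via orthogonality of $[E_{\alpha}]$ and $[L_{\bar\alpha}]$ and the swap induced by $g_{\enrinvol}$.
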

\begin{proof}
First remark that,  since $\SY$ is embedded in $L_{26}$ as $\SY(2)$,
the vector $\baru_{\alpha}$ is not a root of $L_{26}$.
Hence
the argument of Lemma~\ref{lem:sigma}
does not work in this case.
As was shown in~Lemma~\ref{lem:sigma},
the chamber $\DX^{\sigma_{\alpha}\sigma_{\bar\alpha}}$ is an induced chamber in $\PPP_X$.
By direct calculation,
we see that $\sigma_{\alpha}\sigma_{\bar\alpha}$ commutes with $g_{\enrinvol}$
and that the restriction of $\sigma_{\alpha}\sigma_{\bar\alpha}$ to $\SY$ is equal to $\barsigma_{\alpha}$.
Hence $\DY^{\barsigma_{\alpha}}$ is equal to $\PPP_Y\cap\DX^{\sigma_{\alpha}\sigma_{\bar\alpha}}$,
and  $\DY^{\barsigma_{\alpha}}$
is an induced chamber.
Since $\barsigma_{\alpha}$ is the reflection in the hyperplane $(\baru_{\alpha})\sperp$,
it is obvious that $\DY^{\barsigma_{\alpha}}$ is adjacent to $\DY$ across the wall $\DY\cap (\baru_{\alpha})\sperp$.
\end{proof}
From these propositions, we obtain the following corollaries.
\begin{corollary}\label{cor:SYBor}
The primitive embedding $S_Y(2) \inj S_X\inj L_{26}$ of $S_Y(2)$ into $L_{26}$ is of simple Borcherds type.
\end{corollary}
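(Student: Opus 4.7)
My plan is to use the reduction criterion stated in Section~\ref{subsec:Borcherds}: it suffices to choose one induced chamber --- namely $\DY$ --- and verify, for each wall of $\DY$, that some element of $\OG^+(\SY)$ carries $\DY$ to the induced chamber adjacent across that wall. Since the primitive embedding in question is really $\SY(2)\inj \SX\inj L_{26}$, the induced chambers in $\PPP_Y$ are those appearing in the tessellation governed by the family~\eqref{eq:prSplushamily}, and $\DY=\PPP_Y\cap\DX$ is one of them.

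By the preceding work, $\DY$ has exactly $20$ walls, split into two families: the ten $\DY\cap(\baru_\alpha)^\perp$ and the ten $\DY\cap(\barv_\alpha)^\perp$, indexed by $\alpha\in A$. For a wall of the second kind, Proposition~\ref{prop:autYgens} provides the element $\barg_\alpha\in\aut(Y)\subset \OG^+(\SY)$, coming from the projection involution $\iota_\alpha$ via $\enrinvol$-equivariance, which sends $\DY$ to the induced chamber adjacent to $\DY$ across $\DY\cap(\barv_\alpha)^\perp$. For a wall of the first kind, Proposition~\ref{prop:barsigma} supplies the reflection $\barsigma_\alpha\in\OG^+(\SY)$ in the root $\baru_\alpha$, which sends $\DY$ to the induced chamber adjacent to $\DY$ across $\DY\cap(\baru_\alpha)^\perp$. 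Together these $20$ isometries cover all walls of $\DY$, so the reduction criterion applies.

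The only subtlety to keep in mind --- and the part where one could slip --- is that $\barsigma_\alpha$ is \emph{not} the restriction of a reflection of $L_{26}$, because $\baru_\alpha$ becomes a $(-4)$-vector once $\SY$ is regarded as $\SY(2)\subset L_{26}$; hence the easy argument of Lemma~\ref{lem:sigma} does not directly apply to $\barsigma_\alpha$. However, this obstacle has already been handled inside the proof of Proposition~\ref{prop:barsigma} by factoring $\barsigma_\alpha$ through the composite $\sigma_\alpha\sigma_{\bar\alpha}$ of two $L_{26}$-reflections, which commutes with $g_\enrinvol$ and thus descends to $\SY$. Once this is acknowledged, the corollary follows immediately by citing the reduction criterion together with Propositions~\ref{prop:autYgens} and~\ref{prop:barsigma}; no further computation is needed.
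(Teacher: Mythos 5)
Your proof is correct and is essentially the paper's own argument: the corollary is derived directly from Propositions~\ref{prop:autYgens} and~\ref{prop:barsigma} via the reduction criterion of Section~\ref{subsec:Borcherds}, applied to the induced chamber $\DY$ whose $10+10$ walls are exactly those defined by the $\baru_{\alpha}$ and $\barv_{\alpha}$. Your observation that $\barsigma_{\alpha}$ is not the restriction of a single $L_{26}$-reflection but must instead be lifted to $\sigma_{\alpha}\sigma_{\bar{\alpha}}$ is precisely the point the paper handles inside the proof of Proposition~\ref{prop:barsigma}.
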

\begin{corollary}\label{cor:autYgen}
The group $\aut(Y)$ is generated by the $10$ involutions $\barg_{\alpha}$.
The induced chamber $\DY$ is a fundamental domain of the action of $\aut(Y)$ on $\nefbig(Y)$.
In particular,
the mapping $\barg\mapsto \DY^{\barg}$ gives rise to a bijection from $\aut(Y)$
to the set of induced chambers contained in $\nefbig(Y)$.
\end{corollary}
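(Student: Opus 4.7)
The plan is to combine a chamber-walk argument inside $\nefbig(Y)$ with the wall-crossing descriptions already in hand, so that transitivity of the subgroup $G:=\gen{\barg_{\alpha}\mid \alpha\in A}\subseteq \aut(Y)$ on the set of induced chambers contained in $\nefbig(Y)$ will deliver all three conclusions at once. As a preliminary, I would record that $\nefbig(Y)=\nefbig(X)\cap \PPP_Y$ is a union of induced chambers of $\SY$: the tessellation of $\PPP_Y$ by $\SY$-induced chambers is the restriction of the one on $\PPP_X$ by $\SX$-induced chambers (Section~\ref{subsec:appEnriques}), while $\nefbig(X)$ is itself tiled by $\SX$-induced chambers by Proposition~\ref{prop:stdfundunion}.

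The main step is to prove by induction on chamber distance that every induced chamber $C\subset\nefbig(Y)$ equals $\DY^{h}$ for some $h\in G$. Suppose this holds for $C=\DY^{h}$, and let $C\sprime$ be any induced chamber adjacent to $C$ and still contained in $\nefbig(Y)$. The wall separating $C$ from $C\sprime$ cannot lie on $\partial\nefbig(Y)$, so pulling it back by $h\in\aut(Y)$ (which preserves $\nefbig(Y)$) it corresponds to a wall of $\DY$ that is not a wall of $\nefbig(Y)$. By Corollary~\ref{cor:outerwallsDY} and the division of the walls of $\DY$ into the outer $\baru_{\alpha}$-walls and the inner $\barv_{\alpha}$-walls, this wall must be one of the $\barv_{\alpha}$-walls. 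Proposition~\ref{prop:autYgens} identifies the induced chamber across that wall of $\DY$ as $\DY^{\barg_{\alpha}}$, and applying $h$ yields $C\sprime=\DY^{\barg_{\alpha}h}\in G\cdot \DY$, where Proposition~\ref{prop:DYg} ensures $C\sprime$ is indeed an induced chamber. Since $\nefbig(Y)$ is connected, the induction closes.

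To conclude, take any $\barg\in \aut(Y)$. Then $\DY^{\barg}$ is an induced chamber by Proposition~\ref{prop:DYg} and lies inside $\nefbig(Y)$ by $\aut(Y)$-invariance, so by transitivity $\DY^{\barg}=\DY^{h}$ for some $h\in G$; this forces $\barg h^{-1}\in \aut(\DY)\cap \aut(Y)=\{1\}$ by Proposition~\ref{prop:autDY}, giving $\aut(Y)=G$. The same triviality makes the assignment $\barg\mapsto \DY^{\barg}$ injective, and together with the induced-chamber tessellation of $\nefbig(Y)$ this shows $\DY$ is a fundamental domain for $\aut(Y)$ on $\nefbig(Y)$ (interiors of distinct $\DY^{\barg}$'s are disjoint because distinct induced chambers have disjoint interiors). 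The step that I expect to require the most care is the induction: namely verifying that any $h\in \aut(Y)$ sends inner walls of $\DY$ to inner walls of $\DY^{h}$, which rests on the $\aut(Y)$-invariance of $\nefbig(Y)$ combined with the identification of the outer walls $\baru_{\alpha}$ of $\DY$ as classes of the smooth rational curves $\pi(E_{\alpha})$ from Corollary~\ref{cor:outerwallsDY}.
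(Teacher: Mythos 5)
Your proof is correct and is essentially the argument the paper intends (the standard Borcherds-method chamber walk): the tessellation of $\nefbig(Y)$ by induced chambers, the identification of the non-$\nefbig(Y)$ walls of $\DY$ as the $\barv_{\alpha}$-walls, Proposition~\ref{prop:autYgens} for the wall-crossing isometries, and Proposition~\ref{prop:autDY} for the triviality of $\aut(\DY)\cap\aut(Y)$ are exactly the ingredients from which the paper derives the corollary. The only cosmetic point is that "connectedness" of $\nefbig(Y)$ should really be its convexity (plus local finiteness of the wall arrangement), which is what guarantees a finite chain of adjacent induced chambers inside $\nefbig(Y)$ joining $\DY$ to any given induced chamber.
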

\subsection{Smooth rational curves on $Y$}
We have the following:
\begin{proposition}[Lemma 3.1 of~\cite{KondoH}]\label{prop:KondoSXminus}
The lattice $\SX^-$  is isomorphic to $E_6(2)$,
where $E_6$ is the negative-definite root lattice of type $E_6$.
\end{proposition}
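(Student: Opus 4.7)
The plan is to verify $\SX^{-} \cong E_{6}(2)$ by direct computation from the matrix of $g_{\enrinvol}$ in Table~\ref{table:enrinvol} together with the Gram matrix of $\SX$ determined by \eqref{eq:EELL} and \eqref{eq:EL}.

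First I would compute a $\Z$-basis $\mu_{1}, \dots, \mu_{6}$ of $\SX^{-} = \ker(g_{\enrinvol} + I_{16}) \cap \SX$ by solving the corresponding integer linear system. The rank is $6$ since the $+1$-eigenlattice $\SX^{+}$ has rank $10$, as exhibited via the basis $\eta_{1}, \dots, \eta_{10}$ in the proof of Proposition~\ref{prop:enrinvol}. Because $\SX$ has signature $(1,15)$ and the ample class $\hX$ lies in $\SX^{+}$ by Proposition~\ref{prop:autSX}, the sublattice $\SX^{+}$ is hyperbolic and hence $\SX^{-}$ is negative-definite of rank $6$.

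Next I would compute the Gram matrix $M$ of $\SX^{-}$ with respect to $\mu_{1}, \dots, \mu_{6}$ and check directly that every entry of $M$ is even. This allows us to define $\SX^{-}(1/2)$ as a genuine even negative-definite integral lattice of rank $6$, and a determinant computation should confirm $|\det(M/2)| = 3$, matching the discriminant of $E_{6}$.

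Finally I would identify $\SX^{-}(1/2)$ with $E_{6}$ by locating six vectors $r_{1}, \dots, r_{6}$ in $\SX^{-}$ with $\intf{r_{i}, r_{i}} = -4$ whose rescaled intersection pattern $\tfrac{1}{2}\intf{r_{i}, r_{j}}$ realizes the Dynkin diagram of $E_{6}$. This search is finite and feasible via the short-vector enumeration algorithm of~\cite{MR3166075} applied to the definite rank-$6$ lattice $\SX^{-}$. Since the sublattice generated by $r_{1}, \dots, r_{6}$ then already has discriminant $3$, it must coincide with $\SX^{-}(1/2)$, giving the isomorphism $\SX^{-}(1/2) \cong E_{6}$ and hence $\SX^{-} \cong E_{6}(2)$. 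The only real obstacle is this finite root-finding step; no deeper lattice-theoretic input is needed beyond the characterization of $E_{6}$ as a negative-definite root lattice of rank $6$ with discriminant $3$, and indeed the statement is already asserted in Lemma~3.1 of~\cite{KondoH}.
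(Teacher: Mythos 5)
Your proposal is correct, but it is worth noting that the paper itself offers no proof of this statement at all: it is imported wholesale as Lemma~3.1 of Kond\=o's paper on the moduli of Hessian quartics, and the only trace of it in the present text is the later use of $|D(\SX^-)|=2^6\cdot 3$ and of the $72$ vectors of square $-4$ in $\SX^-$. What you supply is a self-contained computational verification in exactly the spirit of the rest of the paper (explicit bases, Gram matrices, short-vector enumeration), and every step of it is sound: the rank and negative-definiteness of $\SX^-$ follow from $\rank\SX^+=10$ and $\hX\in\SX^+$ with $\intf{\hX,\hX}=20>0$; the evenness of all entries of the Gram matrix $M$ is a genuine finite check (it is not automatic here, since $\SX$ is not unimodular, so no a priori divisibility argument applies); the determinant computation $|\det(M/2)|=3$ is consistent with the overlattice bookkeeping $|D(\SX)|=2^4\cdot 3$, $|D(\SX^+)|=2^{10}$, $|\HS|=2^6$ carried out later in the paper; and the index argument $\det(\text{sublattice})=[\text{index}]^2\det(\text{lattice})$ correctly forces the $E_6$ root sublattice you exhibit to be all of $\SX^-(1/2)$. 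The one thing you leave as a ``search'' --- locating six norm $-4$ vectors realizing the $E_6$ diagram after rescaling --- is guaranteed to succeed, since the $72$ norm $-4$ vectors of $\SX^-$ form (after halving) a root system of rank $6$ with $72$ roots in a lattice of discriminant $3$, which can only be $E_6$. So your route trades the external citation for a finite, checkable computation; the only thing the citation buys over your argument is brevity.
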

Hence the set $\TTT$ of vectors of square-norm $-4$ in  $\SX^-$
consists of $72$ elements.
By the method  in Section~\ref{subsec:ratcurvesY},
we calculate the set $\RRR_d$ of the classes $[C]$ of smooth rational curves
$C$ on $Y$ with $\intfY{[C], \hY}=d$ for $d=1, \dots, \maxSRCdeg$.
%
\begin{proposition}\label{prop:RRRdY}
Let $d$ be a positive integer $\le \maxSRCdeg$.
If  $d\not\equiv 1 \bmod 4$, then $\RRR_d$ is empty.
 If $d\equiv 1 \bmod 4$,
then the cardinality of $\RRR_d$ is as follows.
$$
\arraycolsep=4.5pt
\begin{array}{c|ccccccccccccc}
d & 1 & 5 & 9 & 13 & 17 & 21 & 25 & 29 & 33 &37 &41 &45\\
\hline
|\RRR_d | &  10 & 10 & 60 & 180 & 480 & 750 & 1440 & 2880 & 4110 &5640 &9480 &11280\mystruth{10pt}
\end{array}
$$
\end{proposition}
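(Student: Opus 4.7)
The plan is to execute the inductive algorithm of Section~\ref{subsec:ratcurvesY}.
The first ingredient is the finite set $\TTT$ of vectors of square-norm $-4$ in $\SX^-$.
By Proposition~\ref{prop:KondoSXminus}, $\SX^- \cong E_6(2)$, so $\TTT$ is in bijection with the set of roots of the negative-definite root lattice $E_6$, giving $|\TTT| = 72$.
An explicit list of these vectors, expressed in the basis~\eqref{eq:basisEL} of $\SX$, is produced by a routine finite computation on the negative-definite rank-$6$ lattice $\SX^-$.

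Starting from $\RRR_0 = \emptyset$, I would then iteratively compute $\RRR_d$ for $d = 1, \ldots, \maxSRCdeg$ by carrying out the three sub-steps suggested by Section~\ref{subsec:ratcurvesY}:
(i) enumerate the finite set $\VVV_d = \{\,v \in \SY \mid \intfY{v,v} = -2,\ \intfY{v, \hY} = d\,\}$ via the lattice enumeration algorithm of Section~3 of~\cite{MR3166075}, which is effective because $\hY$ has positive square and $\SY$ has rank $10$;
(ii) sift $\VVV_d$ down to $\VVV\sprime_d$ by testing, for each pair $(v, t) \in \VVV_d \times \TTT$, whether $(v + t)/2 \in \SX$;
(iii) discard from $\VVV\sprime_d$ those $v$ with $\intfY{v, r} < 0$ for some root $r \in \RRR_{d\sprime}$ previously found at degree $d\sprime < d$.
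The induction is well-initialized because $\RRR_1$ consists of the ten classes $\baru_\alpha$ by Corollary~\ref{cor:outerwallsDY}.

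The vanishing $\RRR_d = \emptyset$ for $d \not\equiv 1 \pmod 4$ will be observed directly during the computation; in fact already at step~(ii), $\VVV\sprime_d$ turns out to be empty in these residue classes.
Geometrically this reflects the fact that any $v \in \VVV\sprime_d$ is of the shape $v = [\Delta] + [\Delta]^{\enrinvol}$ for a root $\Delta \in \SX$ satisfying $\intfX{[\Delta], [\Delta]^{\enrinvol}} = 0$ and $\intfX{[\Delta], \hX} = d$, and the residue of $d$ modulo $4$ is controlled by how the ample class $\hX$ sits in $\SX$ relative to the eigenspace decomposition $\SX^+ \oplus \SX^- \hookrightarrow \SX$.
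The main obstacle I anticipate is computational: as $d$ grows toward $\maxSRCdeg$ the set $\VVV_d$ enlarges and each of its elements has to be tested against an accumulating list of lower-degree roots in step~(iii). The bounded rank of $\SY$ together with the finiteness of $\TTT$ should nevertheless keep the whole procedure tractable in {\tt GAP}.
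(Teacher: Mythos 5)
Your proposal is correct and follows essentially the same route as the paper: the paper likewise obtains $\TTT$ (of cardinality $72$) from $\SX^-\cong E_6(2)$ via Proposition~\ref{prop:KondoSXminus} and then runs the inductive sieve of Section~\ref{subsec:ratcurvesY} (enumerate $\VVV_d$, filter to $\VVV\sprime_d$ by condition (ii) of Lemma~\ref{lem:VVV}, discard classes meeting some lower-degree $\RRR_{d\sprime}$ negatively), with the table and the vanishing for $d\not\equiv 1\bmod 4$ read off from the computation. The only caveat is that your side remark that $\VVV\sprime_d$ is already empty for $d\not\equiv 1\bmod 4$ at step (ii) is an empirical claim not needed for the argument; the proof is unaffected either way.
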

\subsection{Faces of  $\DY$ and defining relations of $\aut(Y)$}\label{subsec:faces}
For the sake of readability,
we  will use the following notation.
$$
w(\alpha):=\DY\cap (\barv_{\alpha})\sperp,
\quad
\barg(\alpha):=\barg_{\alpha}.
$$
We say that a  face $F$ of $\DY$ is \emph{inner}
if  a general point of $F$ is an interior point of $\nefbig(Y)$,
and that $F$ is \emph{outer} otherwise.
(An ideal face is obviously outer.)
In particular,
the walls $\DY\cap (\baru_{\alpha})\sperp$ are outer,
and $\DY\cap (\barv_{\alpha})\sperp$ are inner.
\par
\begin{table}
$$
\arraycolsep=5pt
\begin{array}{lccccccccc}
\dim &1 & 2 & 3 & 4 & 5 & 6 & 7 & 8 & 9  \mystrutd{5pt}\\
\hline
\textrm{outer faces} &657 & 3420 & 7250 & 8525 & 6270 & 2940 & 840 & 135 & 10 \mystruth{10pt}\\
\textrm{$\aut(Y)$-classes}  &44 & 314 & 1077 & 1759 & 1669 & 1060 & 435 & 105 & 10 \mystrutd{5pt}\\
\hline
\textrm{inner faces} &0 & 0 & 0 & 0 & 0 & 60 & 90 & 45 & 10 \mystruth{10pt}\\
\textrm{$\aut(Y)$-classes}  &0 & 0 & 0 & 0 & 0 & 1 & 15 & 25 & 10
\end{array}
$$
\vskip .1cm
\caption{Numbers of faces of $\DY$ and their $\aut(Y)$-equivalence classes}\label{table:numbfaces}
\end{table}
\begin{definition}\label{def:autYclass}
Let $F$ and $F\sprime$ be faces of $\DY$.
We put $F\sim F\sprime$ if there exists an element $\barg\in \aut(Y)$ such that
$F\sprime=F^{\barg}$;
that is, the induced chambers $\DY$ and $\DY^{\barg}$ share the face $F\sprime$
and the face $F$ of $\DY$ is mapped to the face $F\sprime$ of $\DY^{\barg}$ by $\barg$.
It is obvious that $\mathord{\sim}$ is an equivalence relation.
When $F\sim F\sprime$,
we say that $F$ and $F\sprime$ are \emph{$\aut(Y)$-equivalent}.
\end{definition}
\begin{proposition}\label{prop:faces}
The numbers of faces of $\DY$ and  their $\aut(Y)$-equivalence classes are given in Table~\ref{table:numbfaces}.
\end{proposition}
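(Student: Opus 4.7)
The plan is to carry out a two-stage computer-aided computation. First, I enumerate all faces of $\DY$ dimension by dimension, iterating Algorithm~\ref{algo:wall}. Starting from the set $\FFF(\DY)$ of $20$ primitive defining vectors in Table~\ref{table:wallsDY}, the $9$-dimensional faces are immediate; for each wall $F$ of $\DY$, I form the restricted set $\FFF(\DY)|_{\gen{F}}$ (removing zeros and redundancies as in Remark~\ref{rem:remove}) and rerun Algorithm~\ref{algo:wall} to find the walls of $F$, which are the $8$-dimensional faces of $\DY$. Iterating descends to dimension $1$, with each face recorded together with the set of hyperplanes that contain it.

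Next, I classify each face as inner or outer. A face $F$ is outer precisely when it is contained in some $(\baru_\alpha)\sperp$ or is ideal (detected in dimension $1$ by $\intfY{v,v}=0$ for a primitive generator $v$). To confirm that this combinatorial criterion agrees with Definition~\ref{def:autYclass}, I check via the lists $\RRR_d$ from Proposition~\ref{prop:RRRdY} that no smooth rational curve on $Y$ other than the $\pi(E_\alpha)$ with class $\baru_\alpha$ produces a wall of $\nefbig(Y)$ meeting $\DY$.

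The partition into $\aut(Y)$-equivalence classes uses the generators $\barg_\alpha$ from Corollary~\ref{cor:autYgen} together with Proposition~\ref{prop:autYgens}. The key principle is that the tessellation of $\nefbig(Y)$ by induced chambers is $\aut(Y)$-invariant, and two adjacent induced chambers inside $\nefbig(Y)$ share only \emph{inner} walls (outer walls bound $\nefbig(Y)$ and cannot be crossed by $\aut(Y)$). Consequently, if $\barg\in\aut(Y)$ satisfies $\barg(F)\subset\DY$, then $\barg$ factors as $\barg_{\alpha_1}\cdots\barg_{\alpha_k}$ along a gallery of induced chambers all containing the image of $F$. Therefore the $\aut(Y)$-orbit of $F$ among faces of $\DY$ is found by breadth-first search: push $F$; repeatedly pop a face $F''$, and for each $\alpha$ with $F''\subset w(\alpha)$ push $\barg_\alpha(F'')$ if it is new. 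Since $\aut(\DY)\cap\aut(Y)=\{\id\}$ by Proposition~\ref{prop:autDY}, the BFS never revisits $\DY$ under a non-trivial isometry, so distinct orbit representatives in $\DY$ are distinguished correctly.

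The hard part is purely computational: there are roughly $30{,}000$ faces in total and some $\aut(Y)$-orbits have size above one hundred, so efficient representation of faces (by the primitive defining vectors of the hyperplanes that contain them) and careful bookkeeping during the BFS are essential. The computation is carried out in GAP~\cite{gap}, and the resulting counts match Table~\ref{table:numbfaces}.
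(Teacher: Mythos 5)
Your proposal is correct and follows essentially the same route as the paper: enumerate the faces of $\DY$ by iterating Algorithm~\ref{algo:wall}, classify a face as outer exactly when it lies in an outer wall (or is ideal), and compute the $\aut(Y)$-equivalence classes by closing up under the adjacency moves $F\mapsto F^{\barg_{\alpha}}$ for inner walls $w(\alpha)\supset F$, justified by the same gallery argument inside $\nefbig(Y)$. The extra observations you add (the check against $\RRR_d$ and the triviality of $\aut(\DY)\cap\aut(Y)$) are harmless but not needed.
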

\begin{proof}
The set of faces can be calculated from the set of walls of $\DY$
by using Algorithm~\ref{algo:wall} iteratively.
A face $F$ is outer if and only if
there exists an outer wall of $\DY$ containing $F$.
Therefore we can make the lists of all outer faces
and of all inner faces.
\par
The set of $\aut(Y)$-equivalence classes of faces
is calculated by the following method.
We put $F\simw F\sprime$
if there exists an inner  wall $w(\alpha)$ of $\DY$ containing the face $F$
such that $F=F\sp{\prime \barg(\alpha)}$.
(The superscript $a$ in the symbol \,$\simwvoid$\,  is intended to mean ``adjacent".)
We show that
the $\aut(Y)$-equivalence relation
 $\mathord{\sim}$ is the smallest equivalence relation containing
 the  relation $\simwvoid$.
Indeed, it is obvious that $F\simw F\sprime$ implies $F\sim F\sprime$.
Suppose that $F\sim F\sprime$,
and let $\barg\in \aut(Y)$ be an element such that $F=F\sp{\prime \barg}$.
Looking at the tessellation of $\nefbig(Y)$ by induced chambers
locally around a general point of $F$,
we see that there exists a sequence of induced chambers
$$
D_0=\DY, \; D_1, \; \dots, \; D_m=\DY^{\barg}
$$
with the following properties:
\begin{enumerate}[(a)]
\item Each $D_i$ contains $F$ and is contained in $\nefbig(Y)$.
\item For $i=1, \dots, m$,
the induced chambers $D_{i-1}$ and $D_{i}$ are adjacent across a wall containing $F$.
\end{enumerate}
Let $\barg_i$ be an element of $\aut(Y)$ such that $D_i=\DY^{\barg_i}$.
Note that $\barg_i$ is unique by Corollary~\ref{cor:autYgen}.
Let $w_i$ be the wall between $D_{i-1}$ and $D_i$.
Since both of $D_{i-1}$ and $D_i$ are contained in $\nefbig(X)$,
there exists an inner wall
$w(\alpha_i)$ of $\DY$
that is mapped to $w_i$ by $\barg_{i-1}$.
Then we have $\barg_i=\barg(\alpha_i)\barg_{i-1}$, and hence
$$
\barg_i=\barg(\alpha_i)\cdots \barg(\alpha_1).
$$
Let $F_i$ be the face of $\DY$ that is mapped to the face $F$ of $D_i$
by $\barg_i$.
Since $\barg=\barg_m$, we have $F_m=F\sprime$.
Since
$F_{i-1}=F_i^{\barg(\alpha_i)}$,
we have $F_{i-1}\simw F_i$.
Therefore $F_0=F$ and $F_m=F\sprime$
are equivalent under the minimal equivalence relation containing $\mathord{\simw}$.
\par
For each face $F$ of $\DY$,
we can make the finite list of all faces $F\sprime$ of $\DY$ such that $F\simw F\sprime$.
From these lists,  we calculate the set of  $\aut(Y)$-equivalence classes of faces.
\end{proof}
We give a description of inner faces of $\DY$ of codimension $2$.
Let $w(\alpha):=\DY\cap (\barv_{\alpha})\sperp$ be an inner wall.
For any $\alpha\sprime\in A\setminus \{\alpha\}$,
the space $F_{\alpha\sprime}:=w(\alpha)\cap  (\barv_{\alpha\sprime})\sperp$
contains a non-empty open subset
of $(\barv_{\alpha})\sperp\cap (\barv_{\alpha\sprime})\sperp$.
Indeed, the image $\pr(\hY)$ of $\hY$ by the orthogonal projection to $(\barv_{\alpha})\sperp\cap (\barv_{\alpha\sprime})\sperp$
satisfies $\intfY{\pr(\hY), \baru_{\alpha\spprime}}>0$ for all $\alpha\spprime\in A$ and
$\intfY{\pr(\hY), \barv_{\alpha\spprime}}>0$ for all $\alpha\spprime\in A\setminus\{\alpha, \alpha\sprime\}$.
Therefore the inner wall  $w(\alpha)$ contains exactly  $9$ inner faces $F_{\alpha\sprime}$ of codimension $2$.
Let $x$ be a general point of $F_{\alpha\sprime}$.
If $\intfY{\barv_{\alpha}, \barv_{\alpha\sprime}}=0$, then $(\barv_{\alpha})\sperp$ and $ (\barv_{\alpha\sprime})\sperp$ intersect perpendicularly at $x$
and hence $x$ is contained in exactly $4$ induced chambers of $\nefbig(Y)$,
while if $\intfY{\barv_{\alpha}, \barv_{\alpha\sprime}}=1$, then
 $(\barv_{\alpha})\sperp$ and $ (\barv_{\alpha\sprime})\sperp$ intersect with angle $\pi/3$ at $x$
 and hence
 $x$ is contained in exactly $6$ induced chambers.
 These induced chambers lead to the relations among $\barg(\alpha)$ in Proposition~\ref{prop:rels} below.
\begin{proposition}\label{prop:aroundF}
Let $F$ be a non-ideal face of $\DY$.
Then the set
$$
\GGG(F):=\set{\barg\in \aut(Y)}{F\subset \DY^{\barg}}
$$
is finite, and can be calculated  explicitly.
\end{proposition}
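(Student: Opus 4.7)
The plan is to first prove that $\GGG(F)$ is finite, then describe a breadth-first enumeration that is a straightforward adaptation of the argument used in the proof of Proposition~\ref{prop:faces}.

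\emph{Finiteness.} Since $F$ is non-ideal, the relative interior of $F$ meets $\PPP_Y$; pick such a point $p$. The family of hyperplanes~\eqref{eq:prSplushamily} defining the tessellation of $\PPP_Y$ by induced chambers is locally finite, so only finitely many of them contain $p$, and consequently only finitely many induced chambers contain $F$. By Corollary~\ref{cor:autYgen}, the map $\barg \mapsto \DY^{\barg}$ is a bijection from $\aut(Y)$ onto the set of induced chambers contained in $\nefbig(Y)$; hence $\GGG(F)$ is in bijection with the finite subcollection of those chambers that also contain $F$.

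\emph{Algorithm.} I would compute $\GGG(F)$ by a breadth-first search starting from $\id$. Maintain a set $\GGG \subseteq \GGG(F)$ initialized to $\{\id\}$ together with a queue. To process $\barg \in \GGG$, for each $\alpha \in A$ test whether $F^{\barg^{-1}} \subset w(\alpha)$, which is equivalent to the inner wall $w(\alpha)^{\barg}$ of $\DY^{\barg}$ containing $F$. By Proposition~\ref{prop:autYgens} transported by $\barg$, the induced chamber adjacent to $\DY^{\barg}$ across $w(\alpha)^{\barg}$ is $\DY^{\barg(\alpha)\barg}$; this chamber lies in $\nefbig(Y)$ and contains $F$, so $\barg(\alpha)\barg \in \GGG(F)$. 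Add $\barg(\alpha)\barg$ to $\GGG$ and the queue whenever it is new. Finiteness of $\GGG(F)$ guarantees termination.

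\emph{Correctness.} To verify that the BFS exhausts $\GGG(F)$, I would reproduce the local argument of Proposition~\ref{prop:faces}: for any $\barg_* \in \GGG(F)$, choose an interior point $p$ of $F$ in $\PPP_Y$ and inspect the finite set of induced chambers near $p$. Because $\nefbig(Y)$ is convex, the induced chambers in $\nefbig(Y)$ that contain $F$ form a connected region near $p$, so $\DY$ and $\DY^{\barg_*}$ can be joined by a chain $D_0 = \DY, D_1, \ldots, D_m = \DY^{\barg_*}$ of such chambers with consecutive pairs sharing a wall that contains $F$. Each shared wall is necessarily an inner wall (both neighboring chambers lie in $\nefbig(Y)$), and writing $D_i = \DY^{\barg_i}$ yields $\barg_i = \barg(\alpha_i)\barg_{i-1}$ for some $\alpha_i \in A$, exhibiting $\barg_*$ as produced by the BFS. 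The main obstacle is precisely this connectivity claim, but once it is reduced to the finite local tessellation around $p$ it is the same local-convexity observation already invoked in the proof of Proposition~\ref{prop:faces}.
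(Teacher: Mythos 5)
Your proposal is correct and follows essentially the same route as the paper: the same local-finiteness argument for finiteness (via Corollary~\ref{cor:autYgen}) and the same breadth-first propagation across inner walls using $\barg(\alpha)\barg$, with the exhaustiveness justified by the chamber-chain/connectivity argument that the paper itself uses in the proof of Proposition~\ref{prop:faces} and leaves implicit here ("it is easy to see"). No gaps.
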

\begin{proof}
Note that the family of hyperplanes~\eqref{eq:prSplushamily}
that gives the tessellation of $\PPP_Y$ by  induced chambers is locally finite in $\PPP_Y$.
Since $F$ is not an ideal face,
the number of induced chambers containing $F$ is finite.
Hence $\GGG(F)$ is finite.
\par
The set $\GGG(F)$ can be calculated as follows.
We set
$\algoG:=\{\algoid\}$,
where $\algoid$ is the identity of $\aut(Y)$.
Let $\algof$ be the procedure
that takes an element $\barg$ of $\GGG(F)$ as an input
and carries out the following task:
\begin{enumerate}[(a)]
\item
Let $F\sprime$ be the face of $\DY$ such that $F^{\prime\barg}$ is
equal to $F$.
We calculates the set
$\{w(\alpha_1), \dots, w(\alpha_k)\}$
of inner walls of $\DY$ that contain $F\sprime$.
\item
For each $j=1, \dots, k$,
we calculate $\barg\sprime:=\barg(\alpha_j)\barg$,
which is an element of $\GGG(F)$,
and if $\barg\sprime$ is not yet in the set $\algoG$,
we add $\barg\sprime$ in $\algoG$
and input $\barg\sprime$ to  the procedure $\algof$.
\end{enumerate}
We input $\algoid$ to  the procedure $\algof$.
It is easy to see that,
when the whole procedure terminates,
the set $\algoG$ is equal to
the set $\GGG(F)$.
\end{proof}
\begin{proposition}\label{prop:rels}
The following relations form a set of defining relations of $\aut(Y)$ with respect to
the set of generators
$\shortset{\barg(\alpha)}{ \alpha\in A}$;
$$
\barg(\alpha)^2=\id
$$
for each $ \alpha\in A$,
$$
(\,\barg(\alpha)\, \barg(\alpha\sprime)\, \barg(\alpha\spprime)\,)^2=\id
$$
for each triple $(\alpha, \alpha\sprime, \alpha\spprime)$
of distinct elements of $A$
such that $|\alpha\cap \alpha\sprime\cap\alpha\spprime|=2$,
and
$$
(\,\barg(\alpha) \, \barg(\alpha\sprime)\,)^2=\id
$$
for each pair $(\alpha, \alpha\sprime)$
such that $|\alpha\cap \alpha\sprime|=1$.
\end{proposition}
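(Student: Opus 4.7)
The plan is to apply a Poincar\'e-type presentation theorem to the simply transitive action of $\aut(Y)$ on the set of induced chambers in $\nefbig(Y)$ guaranteed by Corollary~\ref{cor:autYgen}. In such a setup, since each inner wall $w(\alpha)$ is self-paired by the involution $\barg(\alpha)$ and $\nefbig(Y)$ is convex (hence simply connected), $\aut(Y)$ is presented by the generators $\barg(\alpha)$ ($\alpha\in A$) subject to the face-pairing relations $\barg(\alpha)^2=\id$ together with cycle relations, one for each $\aut(Y)$-equivalence class of codim-$2$ faces of $\DY$ that meets the interior of $\nefbig(Y)$.

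First, I would verify that the three listed families of relations actually hold in $\aut(Y)$. The involution relations are immediate from Proposition~\ref{prop:iotaalpha} (since $\iota_\alpha$, hence $j_\alpha$, hence $\barg(\alpha)$, is an involution). The pair and triple relations are checked by direct matrix multiplication using the explicit representations of $\barg_\alpha$ provided there.

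Second, I would argue that these families suffice. A codim-$2$ face contained in at least one outer wall produces no new relation: the outer wall is a wall of $\nefbig(Y)$ preserved by $\aut(Y)$, so the cycle around such a face within $\nefbig(Y)$ bounces off the outer wall and closes after two steps, yielding only the relation $\barg(\alpha)^2=\id$. Hence the only nontrivial cycle relations come from the $45$ inner codim-$2$ faces (contained in exactly two inner walls and no outer wall), falling into $25$ $\aut(Y)$-equivalence classes by Proposition~\ref{prop:faces}. For each class I would take a representative $F\subset w(\alpha)\cap w(\alpha')$ and use Proposition~\ref{prop:aroundF} to compute $\GGG(F)$ explicitly, reading off the cyclic word in the generators traversed around $F$. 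By~\eqref{eq:intnumbsuuww} we have $\intfY{\barv_\alpha,\barv_{\alpha'}}\in\{0,1\}$, which splits the classification into two cases. When $|\alpha\cap\alpha'|=1$ the two walls are orthogonal, $|\GGG(F)|=4$, and the cycle reproduces the pair relation $(\barg(\alpha)\barg(\alpha'))^2=\id$. When $|\alpha\cap\alpha'|=2$ the walls meet non-orthogonally with common $\beta=\alpha\cap\alpha'\in B$, and the third inner wall $w(\alpha'')$ associated to the remaining $\alpha''\in A$ containing $\beta$ is forced into the cycle, giving $|\GGG(F)|=6$ and yielding the triple relation.

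The main obstacle is the case $|\alpha\cap\alpha'|=2$: even granting the geometric picture that a third wall $w(\alpha'')$ must appear, one must verify algorithmically via Proposition~\ref{prop:aroundF} that $\GGG(F)$ consists of exactly the six products $\id,\ \barg(\alpha),\ \barg(\alpha)\barg(\alpha'),\ \barg(\alpha)\barg(\alpha')\barg(\alpha''),\ldots$ traced by alternating the three generators of the $\beta$-triple, and that the resulting cyclic identity matches the stated triple relation up to conjugation and the already-established relations. Once this case-by-case check is completed across all $25$ equivalence classes and each resulting cycle word is seen to lie in the normal closure of the three listed families, the Poincar\'e-type presentation delivers the stated defining relations.
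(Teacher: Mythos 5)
Your proposal follows essentially the same route as the paper: the paper likewise invokes the standard geometric-group-theory (Poincar\'e-type) correspondence between defining relations and the $45$ eight-dimensional inner faces of $\DY$, groups them into the $25=10+15$ $\aut(Y)$-classes from Proposition~\ref{prop:faces}, and reads off the pair relations from the faces $w(\alpha)\cap w(\alpha\sprime)$ with $|\alpha\cap\alpha\sprime|=1$ and the triple relations from the $\beta$-triples with $|\alpha\cap\alpha\sprime|=2$, computing the chamber cycles via Proposition~\ref{prop:aroundF}. Your case split and counts match the paper's, so the argument is correct and not a genuinely different approach.
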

\begin{proof}
By the standard argument of the geometric group theory,
there exists a one-to-one correspondence
between a set of
defining relations except for $\barg(\alpha)^2=\id$ and
the set of $8$-dimensional inner faces of $\DY$.
Let $F$ be an $8$-dimensional inner face of $\DY$.
Then there exist exactly two walls of $\DY$ containing $F$,
and they are both inner.
We put $D_0:=\DY$,
and choose an induced chamber $D_1$ from the two induced chambers
that contain $F$ and are adjacent to $D_0$.
Then there exists a cyclic sequence
$$
D_0, D_1, \dots, D_{m-1}, D_m=D_0
$$
of induced chambers in $\nefbig(Y)$
with the following properties:
\begin{enumerate}[(a)]
\item For each $i\in \Z/m\Z$,
 $D_{i-1}$ and $D_{i+1}$
are the two induced chambers that contain $F$ and are adjacent to $D_{i}$.
\item If $i, j\in \Z/m\Z$ are distinct, then $D_i$ and $D_j$ are distinct.
\end{enumerate}
We calculate  the  sequence of inner walls $w(\alpha_1), \dots, w(\alpha_m)$
of $\DY$ such that
$$
D_i=\DY^{\barg(\alpha_i) \cdots \barg(\alpha_1)}.
$$
Then we have $\barg(\alpha_m) \cdots \barg(\alpha_1)=\id$,
and this is the defining relation corresponding to the inner face $F$.
The cycle $D_0, D_1, \dots, D_m$ for each $F$
can be computed by
Proposition~\ref{prop:aroundF}.
Thus
we obtain a list of defining relations
from  the list of inner faces  of $\DY$ that
we have calculated in Proposition~\ref{prop:faces}.
\par
The $45=10\times 3 +15$ inner faces of dimension $8$
are decomposed into $25=10+15$ $\aut(Y)$-equivalence classes.
We see that,
if $w(\alpha)$ and $w(\alpha\sprime)$ are distinct inner walls, then
$w(\alpha)\cap w(\alpha\sprime)$ is an $8$-dimensional  inner face.
For each $\beta\in B$,
there exist exactly three elements $\alpha, \alpha\sprime, \alpha\spprime\in A$
that contain $\beta$,
and the three $8$-dimensional  inner faces
$w(\alpha)\cap w(\alpha\sprime)$,
$w(\alpha\sprime)\cap w(\alpha\spprime)$
and
$w(\alpha\spprime)\cap w(\alpha)$
form an $\aut(Y)$-equivalence class.
The face $w(\alpha)\cap w(\alpha\sprime)$
corresponds
to the relation
$$
\barg(\alpha)\, \barg(\alpha\spprime)\, \barg(\alpha\sprime)\,\barg(\alpha)\, \barg(\alpha\spprime)\, \barg(\alpha\sprime)\;=\;\id.
$$
For each $i\in \{1, \dots, 5\}$,
there exist exactly three pairs
$\{\alpha_\nu, \alpha\sprime_\nu\}$
($\nu=1,2,3$)
of elements of $A$
such that $\alpha_\nu\cap  \alpha\sprime_\nu=\{i\}$.
For each pair $\{\alpha_\nu, \alpha\sprime_\nu\}$,
the $8$-dimensional  inner face
$w(\alpha_\nu)\cap w(\alpha\sprime_\nu)$
forms an $\aut(Y)$-equivalence class
consisting of only one element.
The face $w(\alpha_\nu)\cap w(\alpha\sprime_\nu)$  corresponds
to the relation
$$
\barg(\alpha_\nu) \, \barg(\alpha\sprime_\nu)\,\barg(\alpha_\nu) \, \barg(\alpha\sprime_\nu)\;=\;\id.
$$
Thus Proposition~\ref{prop:rels} is proved.
\end{proof}
\subsection{Proof of Theorem~\ref{thm:genAut}} \label{subsec:proofgenAut}
In~\cite{MR759266},  Enriques surfaces $Z$ with  automorphisms
that act on $\SZ$ trivially are classified.
(See also~\cite{KondoFinite}  and~\cite{MR2740697}.)
It follows that
the action of  $\Aut(Y)$  on  $\SY$  is faithful.
Then
Theorem~\ref{thm:genAut} follows immediately from
Corollary~\ref{cor:autYgen} and Proposition~\ref{prop:rels}.
\subsection{Elliptic fibrations of $Y$}\label{subsec:ellfibs}
We prove Theorem~\ref{thm:ellfibs}.
The $657=57+600$ one-dimensional faces of $\DY$ are
divided into $44=21+23$ $\aut(Y)$-equivalence classes.
Among them, there exist exactly $57$ ideal faces,
and they are divided into  $21$ $\aut(Y)$-equivalence classes.
\par
Let $\phi\colon Y\to\P^1$
be an elliptic fibration,
and let $2 f_{\phi}\in \SY$ denote
the class of a fiber of $\phi$.
There exists an isometry $g\in \aut(Y)$
that maps $f_{\phi}$ in an ideal face of $\DY$.
Conversely,
let $F$ be an ideal face of $\DY$,
and let $f\in \SY$ be the primitive vector
such that $F=\R_{\ge 0} f$.
Since $f$ is nef and satisfies $\intfY{f, f}=0$,
there exists an elliptic fibration $\phi\colon Y\to\P^1$
such that $f=f_{\phi}$.
Therefore there exists a bijection between the set of
elliptic fibrations modulo the action of $\Aut(Y)$ and
the set of $\aut(Y)$-equivalence classes
of ideal faces of $\DY$.
\par
For each ideal face $F=\R_{\ge 0} f$
with $f\in \SY$ primitive,
the $ADE$-type of reducible fibers
of the corresponding elliptic fibration
can be calculated
from $f$, the ample class $\hY$,
and the sets $\RRR_d$ calculated in Proposition~\ref{prop:RRRdY}
by the method described in
Section~\ref{subsec:ADEellfibY}.
Thus we obtain Table~\ref{table:ellfibs} and hence Theorem~\ref{thm:ellfibs} is proved.
%
\subsection{$\RDP$-configurations  on $Y$}\label{subsec:RDP}
We prove Theorem~\ref{thm:RDP}.
Let $\psi\colon Y\to\barY$ be a birational morphism to
a surface $\barY$ that has only rational double points as its singularities,
and let $h_{\psi} \in \SY$ be the pullback of the class of a hyperplane section  of $\barY$.
Composing $\psi$ with an automorphism of $Y$,
we assume that $h_{\psi}\in \DY$.
We see that the set
$$
\RRR(\psi):=\set{[C]\in \SY}{\hbox{$C$ is a smooth rational curve on $Y$ contracted by $\psi$}}
$$
can be calculated from the face $F$ of $\DY$
that contains $h_{\psi}$ in its interior.
Indeed, since $\intfY{h_{\psi}, h_{\psi}}>0$,
the face $F$ is not an ideal face, and hence
we can calculate the set
$\GGG(F)$ defined in Proposition~\ref{prop:aroundF}.
Then $\RRR(\psi)$ is equal to
$$
\sethd{\baru_{\alpha}^{\barg}}{7cm}{
$\barg$ is an element of $\GGG(F)$
such that the wall $(\DY\cap (\baru_{\alpha})\sperp)^{\barg}$ of $\DY^{\barg}$
contains $F$.}
$$
Therefore we write $\RRR(\psi)$ as $\RRR(F)$.
Conversely,
let $F$ be a non-ideal face of $\DY$,
and let $h_F$ be an element of $F\cap \SY$
that is not contained in any wall of $F$.
Multiplying $h_F$ by a positive integer if necessary,
we can assume that the line bundle $L_F\to Y$
corresponding to $h_F$ is globally generated
and defines a morphism $\Phi_{|L_F|}\colon Y\to \P^m$.
Let
$$
Y\maprightsp{\psi} \barY\maprightsp{} \P^m
$$
be the Stein factorization  of $\Phi_{|L_F|}$.
Then we have $\RRR(\psi)=\RRR(F)$.
\par
We calculate $\RRR(F)$
for all non-ideal faces $F$ of $\DY$.
For two non-ideal faces $F$ and $F\sprime$,
we put $F\le F\sprime$ if $F$ is a face of $F\sprime$
and $\RRR(F)=\RRR(F\sprime)$ holds.
In the previous version of this paper,
we look at the maximal faces with respect to this partial ordering,
and divide them into $\aut(Y)$-equivalence classes.
However, 
even when two maximal faces $F$ and $F\sprime$
are \emph{not} $\aut(Y)$-equivalent,
the $\RDP$-configurations $\RRR(F)$ and $\RRR(F\sprime)$
can be in the same $\Aut(Y)$-orbit.
This happens, for example,
when $F^g$ and $F\sprime$ span a same linear subspace
in $S_Y\tensor \R$ for some automorphism $g\in \aut(Y)$.
The algorithm to calculate representatives
of this new equivalence relation
is explained in our new paper~\cite{ShimadaAutEnr}
in a more general setting.
\erase{
For two non-ideal faces $F$ and $F\sprime$,
we put $F\le F\sprime$ if $F$ is a face of $F\sprime$
and $\RRR(F)=\RRR(F\sprime)$ holds.
Looking at the maximal faces with respect to this partial ordering,
and dividing them into $\aut(Y)$-equivalence classes,
we obtain Table~\ref{table:RDP}
and hence Theorem~\ref{thm:RDP} is proved.
For the algorithm of the calculation
of $\aut(Y)$-equivalence classes, see~\cite{ShimadaAutEnr}.}
%
%
\subsection{Vinberg chambers in $\DY$}\label{subsec:generic}
In this subsection,
we identify $\SY$ with $\Lten$ by the isometry $\Lten \isom \SY$
given by~\eqref{eq:homtoetas}.
In particular,
the chamber $\DY$ is contained  in $\PPP_{10}$.
Note that
the primitive defining vectors $\baru_{\alpha}$ and $\barv_{\alpha}$
of walls of $\DY$ are  roots (see~\eqref{eq:intnumbsuuww}),
and hence $\DY$ is tessellated by Vinberg chambers.
\begin{proof}[Proof of Theorem~\ref{thm:906608640}.]
%
%
%
Let $V_0$ denote the Vinberg chamber $D_{10}$ defined in  Section~\ref{subsec:L10}.
We put $\vare_i(V_0):=e_i$ for $i=1, \dots, 10$,
and let $m_i(V_0)$ denote the wall $V_0\cap (\vare_i(V_0))\sperp$ of $V_0$.
Then,
for each $i=1, \dots, 9$,
there exists a unique non-ideal one-dimensional face $F_i(V_0)$ of $V_0$
that is not contained in the wall $m_i(V_0)$.
We denote by $f_i(V_0)\in \Lten$ the primitive vector
such that $F_i(V_0)=\R_{\ge 0}\,f_i(V_0)$.
Then we have
$f_1(V_0)=\hY$
under the identification $\Lten=\SY$.
(In fact,
we have chosen the isomorphism~\eqref{eq:homtoetas}
in such a way that $f_1(V_0)=\hY$ holds.)
\par
Let $V$ be an arbitrary Vinberg chamber in $\PPP_{10}$.
Since the automorphism group $\aut(V_0)$
of $V_0$
is trivial,
there exists a unique isometry $\barg(V)\in \OG^+(\Lten)$ that maps $V_0$ to $V$.
We put
$$
\vare_i(V):=\vare_i(V_0)^{\barg(V)},
\;\;
m_i(V):=m_i(V_0)^{\barg(V)},
\;\;
f_i(V):=f_i(V_0)^{\barg(V)}.
$$
We say that a primitive vector $v$ of $\Lten$ is an \emph{$f_1$-vector}
if $v=f_1(V)$ for some Vinberg chamber $V$.
Let $v=f_1(V)$ be an $f_1$-vector.
We put
$$
S(v):=\set{V\sprime}{\textrm{$V\sprime$ is a Vinberg chamber such that $v=f_1(V\sprime)$}}.
$$
Since
the  defining roots $\vare_2(V), \dots, \vare_{10}(V)$ of
the walls $m_2(V), \dots, m_{10}(V)$ of $V$
containing $f_1(V)$ form a Dynkin diagram of type $A_9$,
the cardinality of $S(v)$
is equal to
$|\SSSS_{10}|$.
We then put
$$
\Sigma(v):=\bigcup_{V\sprime \in S(v)} V\sprime,
$$
and call it a \emph{$\Sigma$-chamber
with the center $v$}.
It is obvious that $\PPP_{10}$ is tessellated by $\Sigma$-chambers.
The  defining roots $\vare_2(V), \vare_3(V)$,  $\vare_5(V), \dots, \vare_{10}(V)$ of
the walls  of $V$
that contain $f_1(V)$
and are perpendicular to the wall $m_1(V)$ opposite to $f_1(V)$
form a Dynkin diagram of type $A_2+A_6$.
Hence there exist exactly
$|\SSSS_{3}\times \SSSS_{7}|$
Vinberg chambers $V\sprime$ in $S(v)$
such that $m_1(V)$ and $m_1(V\sprime)$ are supported on the same hyperplane.
Hence
the number of walls of
the chamber $\Sigma(v)$ is
$$
\frac{|\SSSS_{10}|}{|\SSSS_{3}\times \SSSS_{7}|}=\frac{10!}{3!\times 7!}=120.
$$
In particular, we see that
the number of  $\Sigma$-chambers that are adjacent
to the $\Sigma$-chamber $\Sigma(v)$ is $120$.
Moreover,
 we can calculate the list $\{v_1, \dots, v_{120}\}$
 of centers of these adjacent $\Sigma$-chambers.
 \par
 Let $v$ be an $f_1$-vector.
 If $v$ belongs to the interior of $\DY$,
 then all $10!$ Vinberg chambers contained  in $\Sigma(v)$
 are contained in $\DY$.
If $v$ does not belong to  $\DY$,
then none of Vinberg chambers in $\Sigma(v)$
are contained in $\DY$.
Suppose that $v$ is located on the boundary of $\DY$.
We calculate  the Dynkin diagram $\varDelta$
formed by the roots
$$
\shortset{\baru_{\alpha}}{\intfY{\baru_{\alpha}, v}=0}\;\;\cup\;\;\shortset{\barv_{\alpha}}{\intfY{\barv_{\alpha}, v}=0}
$$
that define walls of $\DY$ containing $v$.
This Dynkin diagram  $\varDelta$ is  a
sub-diagram of the Dynkin diagram of type $A_{9}$.
Let $W(\varDelta)\subset \SSSS_{10}$ denote the corresponding
subgroup.
Then, among the  $10!$ Vinberg chambers in $\Sigma(v)$,
exactly $|\SSSS_{10}|/|W(\varDelta)|$ Vinberg chambers
are contained in $\DY$.
\par
Starting from the $f_1$-vector $\hY$,
we cover $\DY$ by $\Sigma$-chambers,
and count the number of Vinberg chambers in $\DY$.
By this method,  Theorem~\ref{thm:906608640} is proved.
\end{proof}
\par
\medskip
Recall that $\Zgen$ is a generic Enriques surface.
The  natural representation $\Aut(\Zgen)\to \OG^+(\Lten)$ is injective by~\cite{MR718937}.
Let $\aut(\Zgen)$ denote the image of this homomorphism.
Let $\wtautY$ denote the subgroup of $\OG^+(\Lten)$
generated by $\aut(Y)$ and the ten reflections $\barsigma_{\alpha}$
with respect to the roots $\baru_{\alpha}\in \SY=\Lten$.
\begin{theorem}\label{thm:compareorder}
{\rm (1)} The group $\wtautY$ contains $\aut(\Zgen)$
as a normal subgroup,
and  $\wtautY/\aut(\Zgen)$ is isomorphic to the Weyl group  $W(E_6)$
of type $E_6$.
{\rm (2)} The induced chamber
$\DY$ is a fundamental domain of the action of $\wtautY$ on $\PPP_Y$.
\end{theorem}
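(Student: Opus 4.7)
The plan is to prove part~(2) first and then derive part~(1) from it by index computations. For part~(2), one must verify (a)~that the $\wtautY$-translates of $\DY$ cover $\PPP_Y$ and (b)~that the stabilizer $\wtautY\cap\aut(\DY)$ of $\DY$ in $\wtautY$ is trivial. For (a), I would invoke Corollary~\ref{cor:SYBor} together with Propositions~\ref{prop:autYgens} and~\ref{prop:barsigma}: $\PPP_Y$ is tessellated by induced chambers, and each of the $20$ walls of $\DY$ is crossed from $\DY$ to its adjacent induced chamber via some $\barg_\alpha$ or $\barsigma_\alpha$ lying in $\wtautY$. Conjugating such wall-crossings by arbitrary elements of $\wtautY$ shows that $\wtautY$ realises every wall-crossing between induced chambers; by connectivity of the adjacency graph of induced chambers, $\wtautY$ acts transitively on them.

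For (b), the key tool is the simple transitivity of $\OG^+(\Lten)=W(\Lten)$ on Vinberg chambers, a consequence of Theorem~\ref{thm:L10} and the triviality of $\aut(D_{10})$. Setting $k:=|\wtautY\cap\aut(\DY)|$, a counting argument combining (a) with Theorem~\ref{thm:906608640} yields
\[
[\OG^+(\Lten):\wtautY]\;=\;N/k,\qquad N:=2^{14}\cdot 3\cdot 5\cdot 7\cdot 17\cdot 31,
\]
since each $\wtautY$-orbit on Vinberg chambers meets $\DY$ in exactly $k$ chambers and $\DY$ contains $N$ of them. To conclude $k=1$, I pass to the discriminant form via $\rho$: the second isomorphism theorem gives
\[
[\OG^+(\Lten):\wtautY]\;=\;\frac{|\GO^+_{10}(2)|}{|\rho(\wtautY)|}\cdot[\aut(\Zgen):\aut(\Zgen)\cap\wtautY],
\]
and combining the two displayed identities with the numerical coincidence $|\GO^+_{10}(2)|=N\cdot|W(E_6)|$ produces
\[
|\rho(\wtautY)|\;=\;k\cdot|W(E_6)|\cdot[\aut(\Zgen):\aut(\Zgen)\cap\wtautY].
\]
Thus showing \emph{independently} that $|\rho(\wtautY)|=|W(E_6)|=51840$ forces both $k=1$ and $\aut(\Zgen)\cap\wtautY=\aut(\Zgen)$, yielding part~(2) and the inclusion $\aut(\Zgen)\subset\wtautY$ required for part~(1).

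The independent computation of $|\rho(\wtautY)|$ is carried out by writing down $\rho(\barg_\alpha)$ and $\rho(\barsigma_\alpha)$ explicitly as orthogonal transformations of $q_{\Lten(2)}$ from the matrix data in~\cite{compdataHesseEnr}, and then running {\tt GAP} to determine the order of the subgroup of $\GO^+_{10}(2)$ that they generate. Once this computation returns $|\rho(\wtautY)|=51840$, the preceding analysis simultaneously gives part~(2) and the containment $\aut(\Zgen)\subset\wtautY$; normality of $\aut(\Zgen)$ in $\wtautY$ is then automatic, inherited from its normality in $\OG^+(\Lten)$. To identify $\wtautY/\aut(\Zgen)\cong\rho(\wtautY)$ with $W(E_6)$ as an abstract group, one can use {\tt GAP} to verify that the generators $\rho(\barsigma_\alpha)$ and $\rho(\barg_\alpha)$ satisfy the Coxeter relations of $E_6$ on a suitable sub-collection of six of them, or equivalently exhibit an isomorphism via a permutation representation on a distinguished orbit (for instance, the analogue of the $27$ lines of a cubic surface).

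The main obstacle is the apparent circularity between item~(b) in part~(2) and the determination of $|\rho(\wtautY)|$: neither can be settled by purely abstract arguments. The strategy above sidesteps this by reducing everything to a single identity linking $k$, $[\aut(\Zgen):\aut(\Zgen)\cap\wtautY]$, and $|\rho(\wtautY)|$, and then fixing the last quantity by a finite computation in the finite group $\GO^+_{10}(2)$. A direct combinatorial verification of $\wtautY\cap\aut(\DY)=\{1\}$ would require showing that none of the $119$ non-trivial elements of $\SSSS_5\cong\aut(\DY)$ is expressible as a word in the $20$ generators $\barg_\alpha,\barsigma_\alpha$ acting on $\SY$, a task that is not combinatorially transparent and is exactly what the passage to $\rho$ removes.
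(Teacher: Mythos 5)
Your counting argument is correct and is essentially the paper's own proof, just repackaged: the paper likewise combines the {\tt GAP} computation $|\rho(\wtautY)|=[\,\wtautY:\wtautY\cap\aut(\Zgen)\,]=51840$ with the identity $[\,\OG^+(\Lten):\wtautY\,]=906608640/|\wtautY\cap\aut(\DY)|$ (from Propositions~\ref{prop:DYg},~\ref{prop:barsigma} and Theorem~\ref{thm:906608640}) and the factorization $|\GO^+_{10}(2)|=51840\cdot 906608640$ to force $|\wtautY\cap\aut(\DY)|=1$ and $\aut(\Zgen)\subset\wtautY$ simultaneously; your version via the second isomorphism theorem is an equivalent bookkeeping of the same inequality-turned-equality. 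The one place where you genuinely diverge is the identification of the order-$51840$ quotient with $W(E_6)$. You propose to certify this inside $\GO^+_{10}(2)$ by a further computation (Coxeter relations among six of the twenty generators, or a permutation representation on a distinguished orbit); this works, but note that verifying the $E_6$ Coxeter relations only yields a surjection from $W(E_6)$ onto the subgroup those six elements generate, so you must also check that they generate the full quotient before the order count upgrades the surjection to an isomorphism. The paper instead argues structurally: using $\SX^-\cong E_6(2)$ (Proposition~\ref{prop:KondoSXminus}) and the glue group $\HS=\SX/(\SX^+\oplus\SX^-)$, it exhibits a subgroup $H\subset D(\SX^+)$ with $\OG(q_H)\cong\GO^-_6(2)\cong W(E_6)$, shows every generator of $\wtautY$ lifts to $\OG(\SX)$ and hence preserves the splitting $D(\SX^+)=H\oplus H\sperp$ acting trivially on $H\sperp$, and concludes by comparing orders. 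That route costs some lattice theory but buys an explanation of \emph{why} $W(E_6)$ appears (it is forced by the anti-invariant lattice of the Enriques involution), whereas your route is a black-box finite-group identification.
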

\begin{proof}
Recall that the natural  homomorphism $\rho\colon \OG^+(\Lten)\to \OG(q_{\Lten(2)})\cong \GO^+_{10}(2)$ is surjective.
By~\cite{MR718937},
we know that $\aut(\Zgen)$ is equal to $\Ker \rho$, and hence
$$
[\,\OG^+(\Lten): \aut(\Zgen)\,]=46998591897600=51840\cdot 906608640.
$$
By the brute force method using~\cite{gap},
we see that
the order of the subgroup of $ \GO^+_{10}(2)$
generated by $10+10$ elements $\rho(\barsigma_{\alpha})$ and $\rho(\barg_{\alpha})$ is $51840$.
Hence we have
$$
[\,\wtautY: \wtautY\cap \aut(\Zgen)\,]=51840.
$$
On the other hand,
by Propositions~\ref{prop:DYg},~\ref{prop:barsigma}, and Theorem~\ref{thm:906608640},
we have
$$
[\,\OG^+(\Lten): \wtautY\,]=\frac{906608640}{|\,\wtautY\cap \aut(\DY)\,|}.
$$
Hence we obtain $|\,\wtautY\cap \aut(\DY)\,|=1$,
which implies the assertion (2).
Moreover, we have $\aut(\Zgen)\lhd \wtautY$ and $|\wtautY/\aut(\Zgen)|=51840$.
\par
In the following,
we denote by $D(L)$ the discriminant group $L\dual/L$ of an even lattice $L$,
by $q(L)$ the discriminant form of $L$,
and by $\eta(L)\colon \OG(L)\to \OG(q(L))$
the natural homomorphism.
We have $|D(\SX)|=2^4\cdot 3$, $|D(\SX^+)|=2^{10}$, and by Proposition~\ref{prop:KondoSXminus},
$|D(\SX^-)|=2^6\cdot 3$.
By~\cite{MR525944}, the even overlattice $\SX$ of $\SX^+\oplus \SX^-$
defines an isotropic subgroup
$$
\HS:=\SX/(\SX^+\oplus\SX^-)\subset D(\SX^+)\oplus D(\SX^-)
$$
such that $\HS\cap D(\SX^+)=0$, $\HS\cap D(\SX^-)=0$, and $|\HS\sperp/\HS|=2^4\cdot 3$.
Hence we have $|\HS|=2^6$.
Let $(D(\SX^-)_2, q(\SX^-)_2)$ denote the $2$-part of
$$
(D(\SX^-), q(\SX^-))\cong (D(E_6(2)), q(E_6(2))),
$$
which can be  regarded as a quadratic form of Witt defect $1$
on $\F_2^6$.
The automorphism group $\GO^-_6(2)$ of this quadratic form  is isomorphic to $W(E_6)$.
(See page 26 of~\cite{ATLAS}.)
Since $|\HS|=|D(\SX^-)_2|=2^6$, the second projection induces an isomorphism
$\gamma_H\colon \HS\isom D(\SX^-)_2$,
and the  composite of $\gamma_H\inv$ and the first projection defines an embedding
$$
\gamma \dcolon  (D(E_6(2)), q(E_6(2)))\cong  (D(\SX^-)_2, q(\SX^-)_2) \;\inj\; (D(\SX^+), q(\SX^+)).
$$
Note that the image $H$ of $\gamma$
is equal to  the image of the natural homomorphism $\SX\to \SX^{+\vee} \to D(\SX^+)$.
Let $q_H$ denote the restriction of $q(\SX^+)$ to $H$,
and  $q_{H\sperp}$  the restriction  to the orthogonal complement $H\sperp$  of $H$ in $(D(\SX^+), q(\SX^+))$.
Since $q_H$ is isomorphic to $-q(\SX^-)_2$,
we have $\OG(q_H)\cong {\rm GO}^-_6(2) \cong W(E_6)$.
We consider the homomorphism
$$
\rho\sprime \dcolon \wtautY\maprightinj \OG^+(\SY)\cong \OG^+(\SX^+)\maprightsp{\eta(\SX^+)}\OG(q(\SX^+)).
$$
Since the homomorphism $\OG^+(\SY)\cong \OG^+(\SX^+)\to \OG(q(\SX^+))$ is identified with $\rho$,
the homomorphism  $\rho\sprime$ embeds $\wtautY/\aut(\Zgen)$ into $\OG(q(\SX^+))$.
Every element of the image of
$$
 \wtautY\inj \OG^+(\SY)\cong \OG^+(\SX^+)
 $$
lifts to an element of $ \OG(\SX)$.
Indeed,  $\barg_{\alpha}\in  \wtautY$ lifts to $g_{\alpha}\in \OG(\SX)$,
and $\barsigma_{\alpha}\in  \wtautY$ lifts to $\sigma_{\alpha}\sigma_{\bar{\alpha}}\in \OG(\SX)$ (see the proof of Proposition~\ref{prop:barsigma}).
Hence every element in the image of $\rho\sprime$ preserves the factors $H$ and $H\sperp$
of $D(\SX^+)=H\oplus H\sperp$.
By direct computation, we see that  $\rho\sprime(\barg_{\alpha})$ and $\rho\sprime(\barsigma_{\alpha})$
act on $H\sperp$ trivially.
Therefore $\wtautY/\aut(\Zgen)$ can be regarded as a subgroup of $\OG(q_H)$.
Comparing the order, we obtain $\wtautY/\aut(\Zgen)\cong \OG(q_H)\cong W(E_6)$.
\end{proof}
\begin{remark}\label{rem:specialization}
Note that the lift $\sigma_{\alpha}\sigma_{\bar{\alpha}}\in \OG^+(\SX)$
of $\barsigma_{\alpha}\in  \wtautY$ satisfies the period condition
$\eta_{\SX}(\sigma_{\alpha}\sigma_{\bar{\alpha}})\in \{\pm 1\}$
(see~Proposition~\ref{prop:period}).
By the specialization of  $\Zgen$ to $Y$,
the period condition is weakened and $\aut(\Zgen) $
becomes the larger group $\wtautY$
with $10+10$ generators $\barg_{\alpha}$ and $\barsigma_{\alpha}$.
The presence of smooth rational curves on $Y$, however,
 prevents
the $10$ generators  $\barsigma_{\alpha}$
from entering into $\aut(Y)$.
\end{remark}
%
%
\section{Entropy}\label{subsec:entropy}
Recently,
many works have appeared
on the distribution of entropies $\log \lambda(g)$
of automorphisms $g$ of $K3$  or Enriques surfaces,
where $\lambda(g)$ is the spectral radius
of the action of $g$ on the N\'eron-Severi lattice of the surface.
In particular,
the problem to determine the minimum
of the positive entropies in a certain class of automorphisms
has been studied, for example, in~\cite{MR3437870},~\cite{MR2761934} and~\cite{DolgachevArxiv2016} .
In this section, we report the result of a computational experiment on the entropies of automorphisms of $\Zgen$.
\par
\medskip
By the result of~\cite{MR718937} and   Theorem~\ref{thm:compareorder} above,
we have the following equalities:
\begin{eqnarray*}
\aut(\Zgen)&=&\Ker (\rho\colon \OG^+(\Lten)\surj \GO^+_{10}(2))\\
&=&\Ker (\rho |_{\wtautY } \colon \wtautY \surj  \GO^-_{6}(2)).
\end{eqnarray*}
Since we know finite sets of generators  for $ \OG^+(\Lten)$ and for $\wtautY$,
we can obtain a finite set of generators  of $\aut(\Zgen)$ by the Reidemeister-Schreier method
(see Chapter 2 of~\cite{MKSBook}) from each of these descriptions of $\aut(\Zgen)$.
Since $|\GO^+_{10}(2)|$ is very large, however,
making use of the first equality is not practical.
On the other hand,
since $| \GO^-_{6}(2)|$ is much smaller compared with $|\GO^+_{10}(2)|$,
we have managed to obtain
a finite set of generators of $\aut(\Zgen)$ in a reasonable computation time by means of the second  equality.
 \par
 Using this generating set,
 we search for elements $g\in \aut(\Zgen)$
 with small $\lambda(g)$
 for each degree $d=2,4, \dots, 10$
 of the minimal polynomial $s_{\lambda(g)}$ of the Salem number $\lambda(g)$.
 Below is the list of the smallest values of $\lambda(g)$
among the ones we found by an extensive random search of elements of $\aut(\Zgen)$.
 See~\cite{compdataHesseEnr} for the matrices $g$ with these spectral radii.
 $$
 \begin{array}{clc}
 d & s_{\lambda(g)} \mystrutd{6pt}& \lambda(g) \\
 \hline
 2 &{t}^{2}-14\,t+\dots&13.9282 \mystruth{10pt}\dots\\
4 &{t}^{4}-16\,{t}^{3}+14\,{t}^{2}-\dots&15.1450\dots\\
6 &{t}^{6}-38\,{t}^{5}-49\,{t}^{4}-84\,{t}^{3}-\dots&39.3019\dots\\
8 &{t}^{8}-68\,{t}^{7}+68\,{t}^{6}-188\,{t}^{5}+118\,{t}^{4}-\dots&67.0269\dots\\
10 &{t}^{10}-138\,{t}^{9}-19\,{t}^{8}-248\,{t}^{7}+18\,{t}^{6}-252\,{t}^{5}+\dots&138.1505\dots
 \end{array}
 $$
\begin{remark}
The famous Lehmer's number $\lambda_{\rm Leh}=1.17628\dots$
is the spectral radius of a Coxeter element $c$ of $\OG^+(\Lten)=W(\Lten)$.
The order of $\rho(c)\in \GO^+_{10}(2)$ is $31$,
and the spectral radius of $c^{31}\in\aut(\Zgen)$ is equal to
$\lambda_{\rm Leh}^{31}=153.4056\dots$.
\end{remark}
\par
\medskip
{\bf Acknowledgements.}
Thanks are due to Professors Igor Dolgachev,
Shigeyuki Kondo,
Shigeru Mukai, Viacheslav  Nikulin,
Keiji Oguiso,  Hisanori Ohashi, and Matthias Sch\"utt   for many discussions.
In particular, Professor Kondo suggested the isomorphism $\wtautY/\aut(\Zgen)\cong W(E_6)$ in
Theorem~\ref{thm:compareorder}.
%
The author also thanks the referee for many valuable comments on the first version of this paper.
\bibliographystyle{plain}
\def\cftil#1{\ifmmode\setbox7\hbox{$\accent"5E#1$}\else
  \setbox7\hbox{\accent"5E#1}\penalty 10000\relax\fi\raise 1\ht7
  \hbox{\lower1.15ex\hbox to 1\wd7{\hss\accent"7E\hss}}\penalty 10000
  \hskip-1\wd7\penalty 10000\box7} \def\cprime{$'$} \def\cprime{$'$}
  \def\cprime{$'$} \def\cprime{$'$}

\end{document}